\documentclass[11pt]{article}
\usepackage{amssymb}
\usepackage{graphicx}
\usepackage{setspace}
  \usepackage{paralist}
  \usepackage{longtable}
   \usepackage{multirow}
    \usepackage{rotating}
\usepackage{marginnote}
\usepackage{mathrsfs}
\usepackage{amsmath, amsthm, amssymb}
\usepackage{authblk}
\usepackage{graphicx}
\usepackage{float}
\usepackage{hyperref}
\usepackage[margin=1in]{geometry}
\usepackage{comment}
\usepackage{color}
\usepackage{cite}
\usepackage{indentfirst}

\allowdisplaybreaks[4]
\numberwithin{equation}{section}
\setlength{\textwidth}{6.5truein} \setlength{\textheight}{9.3truein}
\setlength{\oddsidemargin}{-0.0in}
\setlength{\evensidemargin}{-0.0in}
\setlength{\topmargin}{-0.4truein}
\date{}
%

\newtheorem{theorem}{Theorem}[section]
\newtheorem*{theorem*}{Theorem}
\newtheorem{proposition}[theorem]{Proposition}
\newtheorem{lemma}[theorem]{Lemma}
\newtheorem{corollary}[theorem]{Corollary}
\newtheorem{definition}[theorem]{Definition}

\theoremstyle{remark}
\newtheorem{remark}{Remark}
     
\newcommand{\dist}{\operatorname{dist}}
\newcommand{\diam}{\operatorname{diam}}
\newcommand{\intd}{\,\mathrm{d}}
\title{On the Dirichlet problem for fractional Laplace equation on a general domain}
\author{
Chenkai Liu
\thanks{the Institute of Mathematical Sciences, the Chinese University of Hong Kong; School of Mathematical Sciences and the Institute of Modern Analysis-A Frontier Research Center of Shanghai, Shanghai Jiao Tong University, China.}
Ran Zhuo \thanks{Department of Mathematical and Statistics, Huanghuai University, China. }
}

\begin{document}
\maketitle
\begin{abstract}
\noindent In this paper, we study the weak strong uniqueness of the Dirichlet type problems of fractional Laplace (Poisson) equations.
We construct the Green's function and the Poisson kernel.
We then provide a somewhat sharp condition for the solution to be unique.  We also show that
the solution under such condition exists and must be  given by our Green's function and Poisson kernel.
In doing these, we establish several basic and useful properties of the Green's function and Poisson kernel.
Based on these, we obtain some further a priori estimates of the solutions. Surprisingly those estimates are quite different from the ones for the local type elliptic equations such as Laplace equations.
These are basic properties to the fractional Laplace equations and can be useful in the study of related problems.

\noindent{\bf{Keywords}}: fractional Laplacian, Dirichlet problem, Green function, existence and uniqueness of solutions.

\noindent{\bf {MSC}}:
                      35A01; 
                      35B45; 
                      35J08; 
                      35S05.  
\end{abstract}

\section{Introduction and results}

We are interested in the existence and uniqueness of solutions for the following Dirichlet problem with fractional Laplacian:
\begin{equation}\label{pb+c}
\left\{
\begin{aligned}
(-\Delta)^s u+c u&=f\quad &\text{in}&\quad \Omega,\\
u&=0\quad&\text{on}&\quad\mathbb{R}^n\backslash \Omega,
\end{aligned}
\right.
\end{equation}
where $0<s<1$, $\Omega$ is a bounded domain in $\mathbb{R}^n$ satisfying the uniform exterior ball condition.

In the classical work of Laplacian, the Dirichlet problem of $W^{2,p}$-type plays an essential part: Assume $\Omega\subset \mathbb{R}^n$ be a bounded domain with smooth boundary,  $f\in L^p(B_1)$, $\vec b, c\in L^\infty(\Omega)$ and $c\geqslant0$ in $\Omega$, consider the problem \cite{MR1814364}:
\begin{equation}\label{pb0}
\left\{
\begin{aligned}
-\Delta u+\vec b\cdot\nabla u+cu&=f\ \ \ &\text{in}\ &\Omega,\\
u&=0\ \ \ &\text{on}\ &\partial \Omega.
\end{aligned}
\right.
\end{equation}
It is well known that \eqref{pb0} has a unique solution $u\in W^{2,p}(\Omega)\cap W^{1,p}_0(\Omega)$, which admits the following apriori estimte:
\begin{equation}
\|u\|_{W^{2,p}(\Omega)}\leqslant C\|f\|_{L^p(\Omega)}.
\end{equation}
Dirichlet problem has been utilized in nonlinear diffusion generated by nonlinear sources, thermal ignition of gases, gravitational equilibrium of stars and elsewhere, more details on the application of Dirichlet problem can be found in \cite{MR1625845,MR0151710,MR1814364,MR340701}. Many fruitful results on such problems for Laplacian were obtained. Brezis et al. \cite{MR509489} and Figueiredo et al. \cite{MR664341} (see also Berestycki and Lions \cite{MR600785} for a particular case) worked on a priori bound and existence for solutions of semilinear equations. Chen and Li \cite{MR1217588} investigated a priori estimates for solutions to nonlinear elliptic equations.

The fractional Laplacian has caught the researchers' attentions because of the applications in physics, astrophysics, mechanics and economics in recent years (see \cite{MR2105239,MR2465826,MR2806700,MR1654824,MR2451371} ). In contrast to the situation with local Laplacian, essential tools such as maximum principle, Harnack principle and Hopf lemma are not at our disposal when dealing with solutions in the setting of the nonlocal fractional Laplacian. Rather, these tools need to be reconstructed based on the nonlocal properties of fractional operators. For $c=0$ in \eqref{pb+c}, Ros-Oton and Serra \cite{MR3168912} studied the regularity up to the boundary of solutions to \eqref{pb+c} by developing fractional analog of the Krylov boundary Harnack method. As $\Omega$ being a ball,
$c=0$ in \eqref{pb+c}, Bucur \cite{MR3461641} provided the representation formulas of solutions for \eqref{pb+c}. For more results on the fractional Laplacian, please see \cite{MR3830727,MR4104934,MR2354493,MR2494809,MR4379307,MR3600062,MR2964681,MR4232665,MR4310545,MR3947481,MR3959488} and the references therein.

It is a common idea to represent the solutions of \eqref{pb+c} can as the
convolution of the Green function with the forcing term for $c=0$. The construction of Green functions on $\Omega$ plays an important role in our study.

It is known that (see for \cite{MR1892228})
\begin{equation}
\label{condclass}
  \lim_{\epsilon\rightarrow 0}\epsilon^{-1}\int_{\Omega\backslash\Omega_\epsilon}|u(x)|\intd x=0
\end{equation}
indicates that the trace of  $u$ is $0$ on $\partial\Omega$. Hence, \eqref{condclass} can be a uniqueness condition for the classical second order elliptic PDE.
Here we denote
\begin{equation}
  \Omega_\epsilon=\{x\in\Omega\mid\dist(x,\partial\Omega)>\epsilon\} =\{x\in\Omega\mid \overline{B_\epsilon(x)}\subset\Omega\},
\end{equation}

Similar to the Laplace problem, our main observation is that the following condition \eqref{cond0} plays an important role in the existence, uniqueness and the maximum principle for problem \eqref{pb+c}.
\begin{equation}
\label{cond0}
  \lim_{\epsilon\rightarrow 0}\epsilon^{-s}\int_{\Omega\backslash\Omega_\epsilon}|u(x)|\intd x=0.
\end{equation}
Indeed, we have
\begin{theorem}[Maximum Principle]
\label{thm:main-mp}
 Let $\Omega$ be a bounded domain in $\mathbb{R}^n$ satisfying the uniform exterior ball condition, $0<s<1$, $c\geqslant 0$ in $\Omega$. If $u\in\mathcal{L}_{2s}$ satisfies
 \begin{equation}
   \left\{
\begin{aligned}
(-\Delta)^s u+c u&\geqslant 0\quad &\text{in}&\quad \Omega,\\
u&\geqslant0\quad&\text{on}&\quad\mathbb{R}^n\backslash \Omega.
\end{aligned}
\right.
 \end{equation}
 \begin{equation}
   \label{cond1}
  \lim_{\epsilon\rightarrow 0}\epsilon^{-s}\int_{\Omega\backslash\Omega_\epsilon}u^-(x)\intd x=0.
 \end{equation}
 Then we conclude
 \begin{equation}
   u\geqslant 0\qquad\text{in}\quad \mathbb{R}^n.
 \end{equation}
\end{theorem}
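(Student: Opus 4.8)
The plan is a contradiction argument that isolates the one genuinely non‑classical difficulty, namely the behaviour of $u$ in a boundary layer.

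\textbf{Step 1 (reduction and the interior‑minimum case).} Suppose the conclusion fails. Since $u\geqslant 0$ on $\mathbb{R}^n\setminus\Omega$, the number $m:=\inf_{\mathbb{R}^n}u=\inf_{\Omega}u$ is $<0$ (the case $m=-\infty$ being absorbed into the estimates below). If this infimum were attained at some $x_0\in\Omega$, then $u(x_0)-u(y)\leqslant 0$ for every $y$, with strict inequality for $y\in\mathbb{R}^n\setminus\Omega$ (a set of positive measure on which $u\geqslant 0>u(x_0)$); hence the principal‑value integral $\mathrm{P.V.}\int_{\mathbb{R}^n}\frac{u(x_0)-u(y)}{|x_0-y|^{n+2s}}\intd y<0$, i.e. $(-\Delta)^s u(x_0)<0$, while $c(x_0)u(x_0)\leqslant 0$, contradicting $(-\Delta)^s u+cu\geqslant 0$ in $\Omega$. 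So the infimum is not attained; by interior regularity $u$ is continuous in $\Omega$, so there is a sequence $x_k\in\Omega$ with $u(x_k)\to m$ and $d_k:=\dist(x_k,\partial\Omega)\to 0$.

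\textbf{Step 2 (the exterior ball and the local splitting).} Pick a nearest boundary point $z_k\in\partial\Omega$ to $x_k$ and the uniform exterior ball $B_r(y_k)\subset\mathbb{R}^n\setminus\overline{\Omega}$ with $z_k\in\partial B_r(y_k)$; since $\overline{B_r(y_k)}\subset\mathbb{R}^n\setminus\Omega$ and $z_k\in\overline{B_r(y_k)}$, one has $\dist(x_k,\overline{B_r(y_k)})=d_k$. For $k$ large $u(x_k)<0$, so $c(x_k)u(x_k)\leqslant0$ and
\[
0\leqslant (-\Delta)^s u(x_k)=C_{n,s}\,\mathrm{P.V.}\!\int_{\mathbb{R}^n}\frac{u(x_k)-u(y)}{|x_k-y|^{n+2s}}\intd y=:I_1+I_2+I_3,
\]
where $I_1$ is the principal value over $B_{d_k/2}(x_k)\subset\Omega$, $I_2$ the integral over $\Omega\setminus B_{d_k/2}(x_k)$, and $I_3$ the integral over $\mathbb{R}^n\setminus\Omega$. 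Because $u\geqslant 0$ and $u(x_k)<0$ there, $I_3\leqslant u(x_k)\int_{B_r(y_k)}|x_k-y|^{-n-2s}\intd y\leqslant -c_0\,|u(x_k)|\,d_k^{-2s}$ for a constant $c_0=c_0(n,s,r)>0$ (the exterior ball, at distance $d_k$ and radius $r$, contains a cap of measure $\gtrsim d_k^{n}$ within distance $2d_k$ of $x_k$). Thus $I_3$ is a negative quantity of order $d_k^{-2s}$. For $I_1$, a Taylor expansion of $u$ about $x_k$ (the linear term dropping by the antisymmetry of the principal value) together with interior Schauder estimates for $(-\Delta)^s u=f-cu$ gives $|I_1|\leqslant C\,[u]_{C^{1,1}(B_{d_k/2}(x_k))}\,d_k^{2-2s}$. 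For $I_2$, since $u(x_k)<0$ one has $u(x_k)-u(y)\leqslant u^-(y)$, so its positive part is at most $\int_{\Omega\setminus B_{d_k/2}(x_k)}\frac{u^-(y)}{|x_k-y|^{n+2s}}\intd y$; splitting this into dyadic annuli $\{2^{j-1}d_k\leqslant|y-x_k|<2^{j}d_k\}$, using that on such an annulus $y\in\Omega$ forces $\dist(y,\partial\Omega)<2^{j+1}d_k$, and writing \eqref{cond1} as $\int_{\Omega\setminus\Omega_\epsilon}u^-=\omega(\epsilon)\epsilon^{s}$ with $\omega(\epsilon)\to 0$, one controls the near‑boundary part of $I_2$ by a convergent series governed by the decay of $\omega$, while the part of $I_2$ over any fixed inner core $\Omega_h$ (where $u$ is bounded) tends to $0$ as $k\to\infty$ because $u(x_k)-u(y)\leqslant u(x_k)-m\to 0$ there.

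\textbf{Step 3 (closing the argument; the main obstacle).} It remains to show that $I_1+I_2$ cannot absorb the $-c_0|u(x_k)|d_k^{-2s}$ produced by the exterior ball, which yields the contradiction $0\leqslant(-\Delta)^s u(x_k)<0$. This is precisely where the exponent $\epsilon^{-s}$ in \eqref{cond1} is essential: it is the exact scaling matching the $\dist(\cdot,\partial\Omega)^{s}$ boundary behaviour natural to $(-\Delta)^s$, and it is what forces the boundary‑layer contribution to $I_2$ to be negligible against $d_k^{-2s}$ in the limit (condition \eqref{cond1} is, for instance, violated by the singular $s$‑harmonic function comparable to $\dist(\cdot,\partial\Omega)^{s-1}$, which is the obstruction one must rule out). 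I expect this scale‑matching to be the main technical point, and carrying it out cleanly will likely use a preliminary normalization (rescaling so $\inf u=-1$, together with the pointwise lower bound on $u$ near $\partial\Omega$ that \eqref{cond1} and interior estimates provide) and, if needed, an auxiliary barrier built from the exterior ball — e.g. a torsion‑type function $(\bar r^2-|x-y_k|^2)_+^{s}$ with $\bar r\in(r,2r)$, or the exterior‑ball analogue of $\dist(\cdot,\partial\Omega)^{s-1}$ — to convert the estimates of Step 2 into a strict inequality with controlled constants. Steps 1 and the reduction to a boundary sequence are routine; the estimate in Step 3 is the heart of the matter, and it is the reason the admissible class \eqref{cond1} differs from the classical one.
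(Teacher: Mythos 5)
The paper proves this by a completely different route: it passes to $u^-=\max\{-u,0\}$, applies Theorem~\ref{thm:maxcons} to get $(-\Delta)^s u^-\leqslant 0$ in $\mathcal{D}'(\Omega)$, and then invokes the one-signed version of the uniqueness theorem (Theorem~\ref{thm:un}, Remark~\ref{rmk:mp}), whose proof mollifies $u^-$ and represents the mollification through the Poisson kernel $P_{\Omega_\epsilon}$ of the shrunken domain. The crucial fact used there is the Poisson-kernel bound $P_{\Omega_\epsilon}(x,y)\lesssim \dist(y,\partial\Omega_\epsilon)^{-s}|x-y|^{-(n-s)}$, which after integrating against the mollifier yields precisely the weight $M\epsilon^{-s}$ against $\int_{\Omega\setminus\Omega_{2\epsilon}}|u|$; that is why \eqref{cond1} carries the exponent $s$.

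Your proposal does not use the Green/Poisson machinery and instead attempts a direct pointwise contradiction at a minimizing sequence $x_k\to\partial\Omega$. Step~3 is not merely ``the heart of the matter'' left to be carried out — as you set it up, it cannot be closed, and the obstruction is a concrete scaling mismatch. The exterior ball gives you $I_3\lesssim -c_0|u(x_k)|\,d_k^{-2s}$. But for $I_2$ you bound $u(x_k)-u(y)$ by $u^-(y)$ and integrate against the kernel $|x_k-y|^{-(n+2s)}$. Decomposing into dyadic annuli $A_j=\{2^{j-1}d_k\leqslant|y-x_k|<2^j d_k\}$ and using $\Omega\cap A_j\subset\Omega\setminus\Omega_{2^{j+1}d_k}$ together with $\int_{\Omega\setminus\Omega_\epsilon}u^-=\omega(\epsilon)\epsilon^s$ gives
\begin{equation}
  \int_{A_j\cap\Omega}\frac{u^-(y)}{|x_k-y|^{n+2s}}\,dy \lesssim (2^j d_k)^{-n-2s}\,\omega(2^{j+1}d_k)\,(2^{j+1}d_k)^s \lesssim (2^j d_k)^{-(n+s)}\,\omega(2^{j+1}d_k),
\end{equation}
and summing over $j$ yields $I_2 = o(1)\cdot d_k^{-(n+s)}$. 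Since $n+s>2s$, this is \emph{larger} in order of magnitude than $|I_3|\sim d_k^{-2s}$, so the sign of $I_1+I_2+I_3$ is not controlled and no contradiction follows. The exponent $s$ in \eqref{cond1} is the correct scale for the Poisson kernel, which decays like $|x-y|^{-(n-s)}$ near the boundary layer, not for the raw fractional-Laplacian kernel $|x-y|^{-(n+2s)}$ that your decomposition uses; condition \eqref{cond1} is an $L^1$-type constraint too weak to dominate that more singular kernel pointwise. Passing to the Poisson representation (after mollifying, as the paper does, also sidestepping your implicit assumption that $(-\Delta)^s u$ can be evaluated pointwise at $x_k$ for a merely $\mathcal{L}_{2s}$ function) is not an optional refinement but the mechanism that makes the $\epsilon^{-s}$ normalization effective. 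If you want to salvage a self-contained argument, you would in effect have to reprove a boundary Poisson-kernel estimate (or an equivalent barrier with $\dist^{s-1}$ growth matched against the $d_k^s$ gain in \eqref{cond1}), which amounts to the content of Sections~\ref{sec.4}--\ref{sec.5} of the paper.
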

\noindent Here, we say $\Omega$ satisfies the exterior ball condition if for any $X\in\partial\Omega$, there exists a ball $B_r(c)$ such that
\begin{equation}
  B_r(c)\cap\Omega=\emptyset\qquad\text{and}\qquad X\in\overline{B_r(c)}\cap\overline{\Omega}.
\end{equation}
We say $\Omega$ satisfies the uniform exterior ball condition with uniform radius $r$, if the radius of exterior ball corresponding to any $X\in\partial\Omega$ can be chosen uniformly as $r$.

\begin{theorem}
\label{thm:main1}
  Let $\Omega$ be a bounded domain in $\mathbb{R}^n$ satisfying the uniform exterior ball condition, $0<s<1$, $c=0$ in $\Omega$. If $f\in L^1_s(\Omega)$, then problem \eqref{pb+c} has a unique solution $u=G_\Omega\ast f$ satisfying condition \eqref{cond0}.
\end{theorem}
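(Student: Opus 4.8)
The plan is to split Theorem~\ref{thm:main1} into a uniqueness statement, which I will reduce to the maximum principle of Theorem~\ref{thm:main-mp}, and an existence statement, which I will obtain by approximating $f$ by smooth data and passing to the limit in the Green representation, using the pointwise and integral bounds for $G_\Omega$ proved in the preceding sections. Throughout write $\delta(x):=\dist(x,\partial\Omega)$, so that $L^1_s(\Omega)=L^1(\Omega;\delta^s\intd x)$ and $\Omega\setminus\Omega_\epsilon=\{0<\delta<\epsilon\}$. For \emph{uniqueness}, suppose $u_1,u_2\in\mathcal L_{2s}$ both solve \eqref{pb+c} with $c=0$ and both satisfy \eqref{cond0}, and set $w=u_1-u_2$. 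Then $w\in\mathcal L_{2s}$, $(-\Delta)^sw=0$ in $\Omega$, $w=0$ on $\mathbb R^n\setminus\Omega$, and since $w^\pm\le|w|$ we get $\lim_{\epsilon\to0}\epsilon^{-s}\int_{\Omega\setminus\Omega_\epsilon}w^\pm\intd x=0$, i.e.\ \eqref{cond1} holds for $w$ and for $-w$. Applying Theorem~\ref{thm:main-mp} (with $c=0$) to $w$ gives $w\ge0$ in $\mathbb R^n$, and to $-w$ gives $w\le0$; hence $w\equiv0$. In particular a solution satisfying \eqref{cond0} is unique, and once $G_\Omega\ast f$ is shown to be such a solution it must be the solution.

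For \emph{existence}, I would first record the smooth case: for $f\in C_c^\infty(\Omega)$ it is standard (and follows from the properties of $G_\Omega$ established above, together with the exterior-ball barrier) that $u=G_\Omega\ast f$ solves \eqref{pb+c} with $c=0$ and satisfies $|u(x)|\le C\|f\|_{L^\infty}\,\delta(x)^s$ near $\partial\Omega$, so that \eqref{cond0} is immediate. For general $f\in L^1_s(\Omega)$ choose $f_k\in C_c^\infty(\Omega)$ with $f_k\to f$ in $L^1_s(\Omega)$ and set $u_k=G_\Omega\ast f_k$. Using the mapping bound $\|G_\Omega\ast g\|_{L^1(\Omega)}\le C\|g\|_{L^1_s(\Omega)}$ — which is the estimate $\int_\Omega G_\Omega(x,y)\intd x\le C\,\delta(y)^s$ together with symmetry of $G_\Omega$ — and the tail control needed to stay in $\mathcal L_{2s}$, one gets $u_k\to u:=G_\Omega\ast f$ in $L^1(\Omega)$ and in $\mathcal L_{2s}$. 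Testing $(-\Delta)^su_k=f_k$ against $\varphi\in C_c^\infty(\Omega)$ through the duality $\int_{\mathbb R^n}u_k\,(-\Delta)^s\varphi=\int_\Omega f_k\varphi$ and letting $k\to\infty$ shows that $u$ solves \eqref{pb+c} with $c=0$ in the required (distributional) sense, while $u=0$ on $\mathbb R^n\setminus\Omega$ holds by construction.

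It then remains to verify \eqref{cond0} for $u=G_\Omega\ast f$ with merely $f\in L^1_s(\Omega)$. By Fubini,
\[
  \epsilon^{-s}\int_{\Omega\setminus\Omega_\epsilon}|u(x)|\intd x
  \le\int_\Omega|f(y)|\,\Phi_\epsilon(y)\intd y,\qquad
  \Phi_\epsilon(y):=\epsilon^{-s}\int_{\Omega\setminus\Omega_\epsilon}G_\Omega(x,y)\intd x .
\]
From the two-sided bound $G_\Omega(x,y)\asymp|x-y|^{2s-n}\min\{1,(\delta(x)\delta(y)\,|x-y|^{-2})^s\}$, splitting the thin layer $\{0<\delta(x)<\epsilon\}$ according to whether $|x-y|$ is small or large compared with $\epsilon$, one obtains $\Phi_\epsilon(y)\le C\,\delta(y)^s$ uniformly in $\epsilon\in(0,\epsilon_0)$; moreover $\Phi_\epsilon(y)\to0$ as $\epsilon\to0$ for each fixed $y\in\Omega$, since $G_\Omega(\cdot,y)\in L^1(\Omega)$ forces $\int_{\Omega\setminus\Omega_\epsilon}G_\Omega(\cdot,y)\to0$. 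Because $f\in L^1_s(\Omega)$ makes $|f(y)|\delta(y)^s$ integrable, dominated convergence yields $\epsilon^{-s}\int_{\Omega\setminus\Omega_\epsilon}|u(x)|\intd x\to0$, i.e.\ \eqref{cond0}.

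The step I expect to be the main obstacle is this last one: securing the \emph{uniform} bound $\Phi_\epsilon(y)\le C\,\delta(y)^s$ — not merely finiteness for each fixed $\epsilon$ — requires the sharp boundary decay of $G_\Omega$ available only under the uniform exterior ball condition, and one must also check carefully that the approximation $f_k\to f$ genuinely passes to the limit in the weak formulation (stability of the Green operator from $L^1_s$ into $L^1\cap\mathcal L_{2s}$). By contrast, the reduction of uniqueness to Theorem~\ref{thm:main-mp} and the smooth-data case are routine once the Green's-function estimates of the earlier sections are in hand.
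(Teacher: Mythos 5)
Your overall decomposition (uniqueness via a maximum-principle argument, existence by smooth approximation, and verification of \eqref{cond0} by Fubini plus a pointwise bound on $\Phi_\epsilon(y)=\epsilon^{-s}\int_{\Omega\setminus\Omega_\epsilon}G_\Omega(x,y)\intd x$) is essentially the route the paper takes: the paper proves a direct uniqueness theorem (Theorem~\ref{thm:un}, via the Poisson kernel on $\Omega_\epsilon$ after mollification) and an existence theorem (Theorem~\ref{thm:exis}, with exactly your $\Phi_\epsilon$, there called $A_\epsilon$) and declares Theorem~\ref{thm:main1} a corollary. Routing uniqueness through Theorem~\ref{thm:main-mp} is fine and not circular, since the paper obtains Theorem~\ref{thm:main-mp} from Theorem~\ref{thm:un}, which is independent of Theorem~\ref{thm:main1}.

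However, there are two genuine gaps in your treatment of the key estimate. First, your justification of the pointwise limit $\Phi_\epsilon(y)\to0$ is not valid as stated: from $G_\Omega(\cdot,y)\in L^1(\Omega)$ and dominated convergence you correctly get $\int_{\Omega\setminus\Omega_\epsilon}G_\Omega(\cdot,y)\intd x\to0$, but $\Phi_\epsilon(y)$ carries the diverging prefactor $\epsilon^{-s}$, so ``$0\cdot\infty$'' is not resolved by $L^1$-integrability alone. What is actually needed, and what the paper proves, is a quantitative rate: for $y\in\Omega_{2\epsilon}$, using the bound $G_\Omega(x,y)\leqslant C\,\delta(x)^s/|x-y|^{n-s}$ and the coarea slicing $\int_0^\epsilon\int_{\partial\Omega_t}t^s|x-y|^{-(n-s)}\intd\mathcal{H}^{n-1}_x\intd t$, one gets $\int_{\Omega\setminus\Omega_\epsilon}G_\Omega(\cdot,y)\intd x\lesssim\epsilon^{2s}$, whence $\Phi_\epsilon(y)\lesssim\epsilon^s\to0$. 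Without this the dominated-convergence step does not close. Second, you invoke the two-sided estimate $G_\Omega(x,y)\asymp|x-y|^{2s-n}\min\{1,(\delta(x)\delta(y)|x-y|^{-2})^s\}$, but the paper establishes only the one-sided upper bounds $G_\Omega\leqslant C|x-y|^{2s-n}$, $G_\Omega\leqslant C\,\delta(x)^s|x-y|^{-(n-s)}$, and $G_\Omega\leqslant C\,\delta(y)^s|x-y|^{-(n-s)}$; the product-form upper bound does not follow from these and is not proved here, and no lower bound is proved at all. Fortunately the one-sided upper bounds suffice for both $\Phi_\epsilon(y)\leqslant C\delta(y)^s$ and $\Phi_\epsilon(y)\to0$ when combined with the coarea slicing, which is exactly how the paper proceeds; you should replace the appeal to $\asymp$ by that argument.
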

\noindent Here and hereafter, the weighted Lebesgue spaces $L^p_\sigma$ is defined as:
\begin{equation}
L^p_\sigma(\Omega)=\{f\in L^1_{\mathrm{loc}}(\Omega)\mid[\dist(\cdot, \mathbb{R}^n\backslash\Omega)^\sigma f]\in L^p(\Omega)\},
\end{equation}
with a nature norm:
$\|f\|_{L^p_\sigma(\Omega)}=\|\dist(\cdot, \mathbb{R}^n\backslash\Omega)^\sigma f\|_{L^p(\Omega)}$.

\begin{remark}
  If condition \eqref{cond0} is not satisfied, then the solution is not unique. Indeed, even for $\Omega =B_1$, $f\equiv 0$ and $g\equiv 0$, there exists a nontrivial solution
  \begin{equation}
    u(x)=\begin{cases}
      (1-|x|^2)^{s-1}\qquad&\text{if}\quad x\in B_1,\\
      0\qquad&\text{if} \quad x\in \mathbb{R}^n\backslash B_1.
    \end{cases}
  \end{equation}
  One observe that such $u$ nearly satisfies the condition \eqref{cond0}.
\end{remark}

This kind of weak-strong uniqueness and very weak non-uniqueness phenomena is also found for other equations such as the Navier-Stokes equations and some transport equations. For the Navier-Stokes equations, the weak-strong uniqueness was  established by Fabes, Ladyzhenskaya, Prodi, Serrin and etc. (see for \cite{MR316915},\cite{MR0236541},\cite{MR126088},\cite{MR0150444},$\cdots$). Non-uniqueness of weaker solutions was first proved by Buckmaster in \cite{MR3898708}, see also \cite{MR3951691,MR4422213}.
Recently, Cheskidov and Luo showed  the critical regularity criteria for the non-uniqueness of very weak solution to Navier-Stokes equations \cite{MR4462623}\cite{2105.12117} and  the transport equation in \cite{MR4199851}.

\begin{theorem}
\label{thm:main+c}
  Let $\Omega$ be a bounded domain in $\mathbb{R}^n$ satisfying the uniform exterior ball condition, $0<s<1$, $|\sigma|\leqslant s$, $c\geqslant0$ in $\Omega$. If $f\in L^p_\sigma(\Omega)$, then problem \eqref{pb+c} has  a unique solution $u$ satisfying condition \eqref{cond0}.

Moreover, the solution $u$ satisfies the following estimate:
\begin{enumerate}
\item If $p=1$, then for any $q$ satisfying $1\leqslant q<\frac{n}{n-2s}$,
$u\in  L^q_\sigma(\Omega)$ with
\begin{equation}
\|u\|_{ L^q_\sigma(\Omega)}\leqslant C\|f\|_{ L^1_\sigma(\Omega)}.
\end{equation}
\item If $1<p<\frac{n}{2s}$, then $u\in  L^{\frac{np}{n-2sp}}_\sigma(\Omega)$ with
\begin{equation}
\|u\|_{ L^{\frac{np}{n-2sp}}_\sigma(\Omega)}\leqslant C\|f\|_{ L^p_\sigma(\Omega)}.
\end{equation}
\item If $p>\frac{n}{2s}$, then $u\in  L^\infty_\sigma(\Omega)$ with
\begin{equation}
\|u\|_{ L^\infty_\sigma(\Omega)}\leqslant C\|f\|_{ L^p_\sigma(\Omega)}.
\end{equation}
\end{enumerate}
\end{theorem}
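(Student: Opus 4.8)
The plan is to reduce the problem with the zeroth--order term to the Green potential of Theorem~\ref{thm:main1} via the maximum principle, Theorem~\ref{thm:main-mp}, so that what remains is a single weighted kernel bound followed by Hardy--Littlewood--Sobolev. Write $\delta(x):=\dist(x,\mathbb{R}^n\setminus\Omega)$, $\mathcal{G}h:=G_\Omega\ast h$, and $I_{2s}g(x):=\int_\Omega|x-y|^{2s-n}g(y)\intd y$. Two facts will be used repeatedly. Since $|\sigma|\le s$ and $\Omega$ is bounded, $\delta^{s-\sigma}$ is bounded, so $L^p_\sigma(\Omega)\hookrightarrow L^1_s(\Omega)$; in particular $\mathcal{G}f$ makes sense and, by Theorem~\ref{thm:main1}, is the unique solution of \eqref{pb+c} with $c\equiv0$ and forcing $f$ obeying \eqref{cond0}. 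Moreover, the boundary Green--function bounds established earlier give, for $|\sigma|\le s$,
\[
  0\ \le\ \delta(x)^\sigma\,G_\Omega(x,y)\,\delta(y)^{-\sigma}\ \le\ C\,|x-y|^{2s-n}\qquad(x,y\in\Omega),
\]
so that $\delta^\sigma|\mathcal{G}h|\le C\,I_{2s}(\delta^\sigma|h|)$ on $\Omega$. The kernel bound follows from an elementary dichotomy: if $|x-y|\le(\delta(x)\delta(y))^{1/2}$ then $\delta(x)\approx\delta(y)$ (so the weights cancel) and $G_\Omega(x,y)\lesssim|x-y|^{2s-n}$; if $|x-y|>(\delta(x)\delta(y))^{1/2}$ then $G_\Omega(x,y)\lesssim\delta(x)^s\delta(y)^s|x-y|^{-n}$ and $\max\{\delta(x),\delta(y)\}\lesssim|x-y|$, and since $s\pm\sigma\ge0$ the product is again $\lesssim|x-y|^{2s-n}$.

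For the \emph{a priori estimates} I would first prove the pointwise comparison $|u|\le\mathcal{G}|f|$ for every solution $u$ of \eqref{pb+c} satisfying \eqref{cond0}. Indeed, $\mathcal{G}|f|\ge0$ (by positivity of $G_\Omega$, or Theorem~\ref{thm:main-mp}), so $v:=\mathcal{G}|f|-u\in\mathcal{L}_{2s}$ satisfies $(-\Delta)^sv+cv=|f|-f+c\,\mathcal{G}|f|\ge0$ in $\Omega$, vanishes on $\mathbb{R}^n\setminus\Omega$, and obeys \eqref{cond1} (being a difference of two functions obeying \eqref{cond0}); Theorem~\ref{thm:main-mp} gives $v\ge0$, i.e.\ $u\le\mathcal{G}|f|$, and the same argument for $-u$ gives $|u|\le\mathcal{G}|f|$. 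Hence $\|u\|_{L^q_\sigma}\le C\|I_{2s}(\delta^\sigma|f|)\|_{L^q(\Omega)}$, and the three cases follow from standard Riesz--potential mapping properties on the bounded set $\Omega$: for $1<p<\frac{n}{2s}$ the Hardy--Littlewood--Sobolev inequality $I_{2s}\colon L^p\to L^{\frac{np}{n-2sp}}$; for $p=1$ its weak endpoint $I_{2s}\colon L^1\to L^{\frac{n}{n-2s},\infty}$, which on the finite--measure set $\Omega$ embeds into every $L^q$ with $q<\frac{n}{n-2s}$; and for $p>\frac{n}{2s}$ H\"older's inequality, since then $p'<\frac{n}{n-2s}$ and $|\cdot|^{2s-n}\in L^{p'}(B_{\diam\Omega})$.

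\emph{Uniqueness} is immediate: two solutions $u_1,u_2$ of \eqref{pb+c} with \eqref{cond0} give $w:=u_1-u_2$ solving the homogeneous problem and obeying \eqref{cond0}, hence $w^\pm$ obey \eqref{cond1}, and Theorem~\ref{thm:main-mp} applied to $w$ and to $-w$ forces $w\equiv0$. For \emph{existence} I would use the integral reformulation: by Theorem~\ref{thm:main1} (together with $c\in L^\infty$ and $L^p_\sigma\hookrightarrow L^1_s$), any $u\in L^p_\sigma$ with $u+\mathcal{G}(cu)=\mathcal{G}f$ yields, via $u=\mathcal{G}(f-cu)$, a solution of \eqref{pb+c} obeying \eqref{cond0}. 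The operator $Tu:=\mathcal{G}(cu)$ equals $\mathcal{G}$ composed with multiplication by $c$; multiplication by $c$ is bounded on $L^p_\sigma(\Omega)$, and $\mathcal{G}$ is compact there, because after conjugating by the isometry $h\mapsto\delta^\sigma h$ of $L^p_\sigma(\Omega)$ onto $L^p(\Omega)$ it becomes an integral operator on the bounded domain $\Omega$ whose kernel $\delta(x)^\sigma G_\Omega(x,y)\delta(y)^{-\sigma}$ is continuous off the diagonal and dominated by $C|x-y|^{2s-n}$, i.e.\ weakly singular. Thus $T$ is compact, $I+T$ is Fredholm of index $0$, and it is injective because $(I+T)u=0$ makes $u=\mathcal{G}(-cu)$ a solution of the homogeneous problem obeying \eqref{cond0}, hence $u\equiv0$ by the uniqueness just shown. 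Therefore $I+T$ is invertible, $u:=(I+T)^{-1}\mathcal{G}f$ solves \eqref{pb+c} with \eqref{cond0}, and writing $u=\mathcal{G}(f-cu)$ with $f-cu\in L^p_\sigma$ and applying the estimates above puts $u$ in the asserted target space with the asserted bound.

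The step with real content is the weighted kernel bound: behind it is the boundary decay $G_\Omega(x,y)\lesssim|x-y|^{2s-n}\min\{1,\ \delta(x)^s\delta(y)^s|x-y|^{-2s}\}$, and the hypothesis $|\sigma|\le s$ is exactly the range for which the $\delta^s$ degeneracy of $G_\Omega$ at $\partial\Omega$ absorbs the singular weight $\delta^{-\sigma}$; this is presumably the source of the ``surprising'' difference from the local case flagged in the introduction. After that the argument is routine (comparison to remove $c$, Hardy--Littlewood--Sobolev, Fredholm); the only other thing to be careful about is that $f$, $cu$ and the Fredholm solution all remain in $L^1_s\cap\mathcal{L}_{2s}$, so that Theorem~\ref{thm:main1} applies exactly as stated.
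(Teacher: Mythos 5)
The paper never actually proves Theorem~\ref{thm:main+c}: Section~\ref{sec.6} establishes the maximum principle, and existence/uniqueness only for $c\equiv 0$ (Theorems~\ref{thm:un}, \ref{thm:exis}, culminating in Theorem~\ref{thm:main1}), and then stops. Neither the weighted $L^p_\sigma\to L^q_\sigma$ estimates nor the existence with $c\geqslant 0$ is argued anywhere, so your proposal is supplying a proof the authors omit rather than reproducing theirs. As a self-contained program it is a reasonable one: reduce $c\geqslant 0$ to $c\equiv 0$ by comparison against $\mathcal{G}|f|$, prove a weighted kernel bound, apply Hardy--Littlewood--Sobolev, and handle existence by Fredholm alternative. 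The comparison argument is correctly set up ($(-\Delta)^s(\mathcal{G}|f|\mp u)+c(\mathcal{G}|f|\mp u)=|f|\mp f+c\,\mathcal{G}|f|\geqslant 0$ and Theorem~\ref{thm:main-mp}), the uniqueness step is right, and the Fredholm step works because the conjugated kernel is weakly singular on a bounded domain.

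The one place to be careful is the kernel bound. In your dichotomy you invoke the \emph{two-factor} estimate $G_\Omega(x,y)\lesssim\delta(x)^s\delta(y)^s|x-y|^{-n}$, but the paper only proves the two one-sided estimates $G_\Omega(x,y)\lesssim\min\{\delta(x),\delta(y)\}^s|x-y|^{s-n}$ (Proposition~\ref{prop:estG} and Remark~\ref{rmk7}), and these do \emph{not} imply the two-factor bound (take $\delta(x)$ small but $\delta(y)\gg|x-y|$). Fortunately your desired conclusion
\[
\delta(x)^\sigma\, G_\Omega(x,y)\,\delta(y)^{-\sigma}\ \lesssim\ |x-y|^{2s-n}
\]
can be obtained from the paper's one-sided bounds alone. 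Taking $\sigma\geqslant 0$ without loss of generality, split on $|x-y|\lessgtr\delta(y)/2$: if $|x-y|\leqslant\delta(y)/2$ then $\delta(x)\approx\delta(y)$, the weights cancel, and $G_\Omega\leqslant\Phi$; if $|x-y|>\delta(y)/2$ then $\delta(y)\lesssim|x-y|$, hence also $\delta(x)\leqslant\delta(y)+|x-y|\lesssim|x-y|$, and Remark~\ref{rmk7} gives $\delta(x)^\sigma\delta(y)^{-\sigma}G_\Omega(x,y)\lesssim\delta(x)^\sigma\delta(y)^{s-\sigma}|x-y|^{s-n}\lesssim|x-y|^{\sigma}|x-y|^{s-\sigma}|x-y|^{s-n}=|x-y|^{2s-n}$, using $\sigma\geqslant 0$ and $s-\sigma\geqslant 0$. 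So the hypothesis $|\sigma|\leqslant s$ enters exactly as you say, but you should cite only the one-sided bounds rather than a boundary-Green estimate the paper never establishes. With that substitution the proposal is complete; the remaining points you should verify explicitly (compactness of the conjugated integral operator at $p=1$, and that $u\in L^p_\sigma$ with $u=0$ outside $\Omega$ places $u\in\mathcal{L}_{2s}$ and $cu\in L^1_s$ so Theorems~\ref{thm:main1} and \ref{thm:main-mp} really apply) are routine.
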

The following well known result can also be seen as a corollary of our estimate in Theorem \ref{thm:main+c}:
\begin{corollary}
Let $\Omega$ be a bounded domain in $\mathbb{R}^n$ satisfying the uniform exterior ball condition, $0<s<1$, $c\geqslant0$ in $\Omega$.
  If $f\in L^\infty(\Omega)$ in \eqref{pb+c}, and $u$ satisfies \eqref{cond0} then $u\in C^s(\Omega)$.
\end{corollary}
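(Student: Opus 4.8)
The plan is to read this off directly from Theorem \ref{thm:main+c} combined with the by now classical boundary regularity theory for the fractional Laplacian with bounded data; the only ingredient beyond the existing literature is the global $L^\infty$ bound on $u$, which Theorem \ref{thm:main+c} supplies for free. I will in fact prove the slightly stronger statement $u\in C^s(\mathbb{R}^n)$ (hence $u\in C^s(\overline\Omega)$, and in particular the asserted $u\in C^s(\Omega)$), together with $\|u\|_{C^s(\mathbb{R}^n)}\le C\|f\|_{L^\infty(\Omega)}$.

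First I would record that $f\in L^\infty(\Omega)$ sits inside the scale used in Theorem \ref{thm:main+c}: since $\Omega$ is bounded, $f\in L^p(\Omega)=L^p_0(\Omega)$ for every finite $p$, with $\|f\|_{L^p_0(\Omega)}\le|\Omega|^{1/p}\|f\|_{L^\infty(\Omega)}$. Choosing any $p>\frac{n}{2s}$ and $\sigma=0$ (admissible since $|\sigma|\le s$), case (3) of Theorem \ref{thm:main+c} tells us that the unique solution $u$ of \eqref{pb+c} subject to \eqref{cond0} lies in $L^\infty_0(\Omega)=L^\infty(\Omega)$ with $\|u\|_{L^\infty(\Omega)}\le C\|f\|_{L^\infty(\Omega)}$; since $u\equiv0$ on $\mathbb{R}^n\setminus\Omega$ this is a bound on all of $\mathbb{R}^n$. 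Next, as $c$ is bounded (as in the classical setting), the function $g:=f-cu$ lies in $L^\infty(\Omega)$ with $\|g\|_{L^\infty(\Omega)}\le C\|f\|_{L^\infty(\Omega)}$, and $u$ solves $(-\Delta)^s u=g$ in $\Omega$, $u=0$ on $\mathbb{R}^n\setminus\Omega$, still subject to \eqref{cond0}; by Theorem \ref{thm:main1} it is the only solution of this $c=0$ problem satisfying \eqref{cond0}.

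Then I would invoke the boundary regularity result of Ros-Oton and Serra \cite{MR3168912}: on a bounded domain satisfying the uniform exterior ball condition, the solution of $(-\Delta)^s v=g$ in $\Omega$, $v=0$ in $\mathbb{R}^n\setminus\Omega$ with $g\in L^\infty(\Omega)$, belongs to $C^s(\mathbb{R}^n)$ with $\|v\|_{C^s(\mathbb{R}^n)}\le C\|g\|_{L^\infty(\Omega)}$. Such $v$ is continuous on $\overline\Omega$ with vanishing boundary trace, hence trivially satisfies \eqref{cond0}, so the uniqueness from the previous step forces $v=u$. Consequently $u\in C^s(\mathbb{R}^n)$, which gives the stated conclusion and the quantitative bound $\|u\|_{C^s}\le C\|f\|_{L^\infty(\Omega)}$.

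The main (and essentially only) delicate point is the matching of solution concepts in the last step: the cited estimate is established for the variational/viscosity solution, so one must verify that it coincides with the $u$ produced here. This is precisely what the uniqueness clauses of Theorems \ref{thm:main1} and \ref{thm:main+c} provide, once one observes that any solution that is continuous up to $\partial\Omega$ with zero boundary value automatically obeys \eqref{cond0}; everything else is bookkeeping with the estimates already in hand.
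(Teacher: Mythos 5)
The paper offers no proof of this corollary at all: it is introduced as a ``well known result'' that ``can also be seen as a corollary of our estimate in Theorem \ref{thm:main+c}'', and nothing follows. So your argument is necessarily a different route from the intended one. What the authors evidently have in mind is a self-contained derivation inside their own framework: the boundary decay $|u(x)|\leqslant C\dist(x,\partial\Omega)^s\|f\|_{L^\infty}$, obtained by applying Proposition \ref{prop:estG} to the representation $u=G_\Omega\ast(f-cu)$ (exactly as in the proof of Theorem \ref{thm:Gf}), combined with rescaled interior estimates on balls $B_{\dist(x,\partial\Omega)/2}(x)$ --- i.e.\ a reproduction of the Ros-Oton--Serra scheme using the paper's Green-function bounds. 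You instead import the Ros-Oton--Serra theorem wholesale and supply the one genuinely nontrivial bookkeeping step, namely the identification of the paper's solution (the unique one subject to \eqref{cond0}) with the variational solution; your observation that a bounded function vanishing off $\Omega$ automatically satisfies \eqref{cond0}, because $|\Omega\setminus\Omega_\epsilon|=O(\epsilon)$ and $s<1$, is exactly the right point and makes the identification legitimate. Incidentally, your route is arguably more honest than the paper's one-line assertion: reading the corollary off Theorem \ref{thm:main+c}(3) with $\sigma=-s$ literally would require $f\in L^p_{-s}(\Omega)$ for some $p>\frac{n}{2s}$, and a bounded $f$ satisfies this only when $sp<1$, i.e.\ only for $n<2$; the weighted scale alone does not give $C^s$ up to the boundary.

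Two caveats on your version. First, the citation does not quite cover the hypotheses: Ros-Oton--Serra's Proposition 1.1 in \cite{MR3168912} is stated for bounded \emph{Lipschitz} domains satisfying the exterior ball condition, whereas the corollary assumes only the uniform exterior ball condition, which does not imply Lipschitz regularity of $\partial\Omega$ (two externally tangent exterior balls produce a non-Lipschitz cusp while preserving the uniform exterior ball condition). You should either note that their proof of the $C^s$ estimate uses only the exterior ball barrier (their Lemma 2.7) plus interior estimates, or replace that barrier by the paper's Proposition \ref{prop:estG}. Second, you tacitly assume $c\in L^\infty(\Omega)$ in forming $g=f-cu$; the corollary does not state this, but the paper needs it throughout (e.g.\ in Theorem \ref{thm:maxcons}), so this is a shared, not a new, omission. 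With the first caveat addressed, your proof is correct.
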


We organize this paper as follows. In Section \ref{sec.2} we introduce some basic preliminaries about the fractional Laplacian.  Section \ref{sec.3} is devoted to the construction of Green's function on general domains. Some basic properties and estimates are established in Section\ref{sec.4}. We construct the Poisson kernel in Section \ref{sec.5}. And we finally solve the Dirichlet problem in Section \ref{sec.6}.

The authors extend their thanks to Professor Congming Li under whose guidance they carried out this work.

\section{Preliminaries about the fractional Laplacian }
\label{sec.2}
For $0<s<1$ and $u\in C_0^\infty(\mathbb{R}^n)$, the fractional Laplacian $(-\Delta)^su$ is given by
\begin{equation}\label{1.0}
(-\Delta)^su(x)=C_{n,s}{\rm P.V.}\int_{\mathbb{R}^n}\frac{u(x)-u(y)}{|x-y|^{n+2s}}\intd y,\ \ \ 0<s<1,
\end{equation}
where P.V. stands for the Cauchy principle value (see for \cite{MR0350027}).
In order that $(-\Delta)^sw$ make sense as a distribution, it is common to define:
\begin{equation*}
\mathcal{L}_\alpha=\bigg\{u: \mathbb{R}^n\rightarrow\mathbb{R} \bigg|\int_{\mathbb{R}^n}\frac{|u(y)|}{1+|y|^{n+\alpha}}\intd y<+\infty\bigg\}.
\end{equation*}
It is easy to see that for $w\in \mathcal{L}_{2s}$, $(-\Delta)^sw$  as a distribution is well-defined: $\forall \varphi\in C_0^\infty(\mathbb{R}^n)$,
\begin{equation}
(-\Delta)^sw(\varphi)=\int_{\mathbb{R}^n}w(x)(-\Delta)^s\varphi(x) \intd x.
\end{equation}

The fundamental solution $\Phi$ for $s$-Laplacian is defined as:
\begin{equation}
  \Phi(x)=a(n,s)|x|^{2s-n}.
\end{equation}
 One has the following equation in sense of distribution:
 \begin{equation}
   (-\Delta)^s \Phi=\delta_0.
 \end{equation}

The Green's functions and Poisson kernels of fractional Laplacian on the unit ball is introduced in \cite{MR3461641}.
Define the Poisson kernel $P(\cdot,\cdot)$ and Green's function $G(\cdot,\cdot)$ for $s$-Laplacian in $B_1$ \cite{MR3461641}  as:
\begin{equation}\label{P}
P(x,y)=c(n,s)\left(\frac{1-|x|^2}{|y|^2-1} \right)^s\frac{1}{|x-y|^n}, \ \ \text{for}\ x\in B_1, y\in\mathbb{R}^n\backslash B_1.
\end{equation}
\begin{equation}\label{G}
G(x,y)=\frac{\kappa(n,s)}{|x-y|^{n-2s}}\int_0^{\rho(x,y)} \frac{t^{s-1}}{(1+t)^\frac{n}{2}}\ \intd t, \ \ \text{for}\ x,y\in B_1,
\end{equation}
where $\rho(x,y)$ is defined as
\begin{equation}
\rho(x,y)=\frac{(1-|x|^2)(1-|y|^2)}{|x-y|^2}.
\end{equation}

Let $P\ast g$ and $G\ast f$ be defined as
\begin{equation}
  \label{Pg}
 P\ast g(x)=\begin{cases}
\displaystyle \int_{\mathbb{R}^n\backslash B_1} P(x,y)g(y)\intd y&\quad x\in  B_1,\\
g(x)&\quad x\in\mathbb{R}^n\backslash B_1,
\end{cases}
\end{equation}

\begin{equation}\label{Gf}
G\ast f(x)=\begin{cases}
\displaystyle \int_{B_1} f(y)G(x,y)\intd y&\quad x\in  B_1,\\
0&\quad x\in\mathbb{R}^n\backslash B_1.
\end{cases}
\end{equation}

The following two results are introduced in \cite{MR3461641}:
\begin{lemma}\label{exi0}
Let $0<s<1$, $f\in C^{2s+\epsilon}(B_1)\cap C(\overline{B_1})$. Then there exists a unique continuous solution: $u=G\ast f$ that solves the following problem \eqref{pb1} pointwisely.
\begin{equation}\label{pb1}
\left\{
\begin{aligned}
(-\Delta)^s u&=f\quad &\text{in}&\quad B_1,\\
u&=0\quad&\text{on}&\quad\mathbb{R}^n\backslash B_1.
\end{aligned}
\right.
\end{equation}
\end{lemma}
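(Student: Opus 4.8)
The plan is to prove uniqueness and existence separately. For uniqueness I would invoke the maximum principle already available as Theorem \ref{thm:main-mp}. If $u_1,u_2$ are two continuous pointwise solutions, then, being continuous on $\mathbb{R}^n$ and supported in $\overline{B_1}$, they are bounded, so $w:=u_1-u_2\in\mathcal L_{2s}$ satisfies $(-\Delta)^sw=0$ in $B_1$ and $w=0$ on $\mathbb R^n\setminus B_1$. Since $w$ is continuous with $w|_{\partial B_1}=0$, the quantity $\omega(\epsilon):=\sup_{B_1\setminus(B_1)_\epsilon}|w|$ tends to $0$, and $|B_1\setminus(B_1)_\epsilon|\le C_n\epsilon$ gives $\epsilon^{-s}\int_{B_1\setminus(B_1)_\epsilon}w^-\intd x\le C_n\,\omega(\epsilon)\,\epsilon^{1-s}\to0$, so hypothesis \eqref{cond1} holds for $w$. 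Theorem \ref{thm:main-mp} with $c\equiv0$ then gives $w\ge0$ on $\mathbb R^n$, and applying it to $-w$ as well forces $w\equiv0$.

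For existence I would first show that $u:=G\ast f$ given by \eqref{Gf} belongs to $C(\mathbb R^n)$. From \eqref{G}, the elementary bound $\int_0^\rho t^{s-1}(1+t)^{-n/2}\intd t\le\min\{s^{-1}\rho^s,\,C_{n,s}\}$, and $\rho(x,y)^s=(1-|x|^2)^s(1-|y|^2)^s|x-y|^{-2s}$, one derives
\begin{equation*}
0\le G(x,y)\le C_{n,s}\min\Bigl\{\,|x-y|^{2s-n},\ \frac{(1-|x|^2)^s(1-|y|^2)^s}{|x-y|^n}\,\Bigr\},\qquad x,y\in B_1.
\end{equation*}
The first bound is locally integrable in $y$ and $G$ is continuous off the diagonal, which yields continuity of $u$ inside $B_1$ by dominated convergence (with a standard splitting of the singular part). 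Splitting $\int_{B_1}G(x,y)\intd y$ over $\{|x-y|<1-|x|^2\}$ (first bound) and its complement (second bound, where $1-|y|^2\le C|x-y|$) gives $\int_{B_1}G(x,y)\intd y\le C_{n,s}(1-|x|^2)^s$, hence $|u(x)|\le C_{n,s}\|f\|_{L^\infty(B_1)}(1-|x|^2)^s\to0$ as $|x|\to1^-$; together with $u\equiv0$ outside, $u\in C(\mathbb R^n)$.

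To verify $(-\Delta)^su=f$ pointwise in $B_1$ (where the hypothesis $f\in C^{2s+\epsilon}(B_1)$ enters), I would split off the fundamental solution: write $G(x,y)=\Phi(x-y)-\mathcal H(x,y)$ with $\mathcal H(x,y):=\Phi(x-y)-G(x,y)\ge0$. From \eqref{G} one sees that $\mathcal H$ is symmetric, equals $\Phi(x-y)$ for $y\in\mathbb R^n\setminus B_1$, and is bounded and smooth on compact subsets of $B_1\times B_1$ (the singularity of $\Phi$ being exactly cancelled); since $y\mapsto\mathcal H(x,y)$ is the $s$-harmonic function in $B_1$ with exterior data $\Phi(x-\cdot)$, one has the identity $\mathcal H(x,y)=\int_{\mathbb R^n\setminus B_1}P(y,z)\,\Phi(x-z)\intd z$ for $x,y\in B_1$ (a computation with \eqref{G}, \eqref{P}, as in \cite{MR3461641}). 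Then $u=u_1-u_2$ with $u_1:=\Phi\ast(f\mathbf 1_{B_1})$ and, by Fubini, $u_2(x)=\int_{\mathbb R^n\setminus B_1}\Phi(x-z)F(z)\intd z$ where $F(z):=\int_{B_1}P(y,z)f(y)\intd y=O(|z|^{-n-2s})$. For $u_1$: localising $f\mathbf 1_{B_1}$ near an interior point $x_0$ by a smooth cutoff splits it into a $C^{2s+\epsilon}_c(\mathbb R^n)$ piece — to which the standard Riesz-potential inversion $(-\Delta)^s(\Phi\ast g)=g$ applies pointwise, giving the value $f(x_0)$ — plus a remainder vanishing near $x_0$, whose $\Phi$-potential is smooth and $s$-harmonic near $x_0$; hence $(-\Delta)^su_1(x_0)=f(x_0)$. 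For $u_2$: it is the $\Phi$-potential of a function supported in $\mathbb R^n\setminus B_1$ with the decay above, so $\Phi\ast(F\mathbf 1_{\mathbb R^n\setminus B_1})$ is smooth in $B_1$ and $(-\Delta)^su_2=0$ there. Therefore $(-\Delta)^su=f$ in $B_1$, and $u=G\ast f$ is the claimed solution, which is unique by the first part.

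The step I expect to be the main obstacle is the last one: rigorously justifying the commutation of the singular operator $(-\Delta)^s$ with the various integrals, establishing the regularity, exact cancellation and Poisson-kernel representation of $\mathcal H$ directly from the explicit formulas \eqref{G}, \eqref{P}, and confirming that $f\in C^{2s+\epsilon}(B_1)$ is precisely what the Riesz-potential inversion and the pointwise definition of $(-\Delta)^su$ require — all resting on careful estimates near the diagonal for $\mathcal H$ and its $x$-derivatives. A more self-contained alternative would be to establish $(-\Delta)^s(G\ast f)=f$ first for $f\in C_c^\infty(B_1)$ by these computations and then pass to general $f\in C^{2s+\epsilon}(B_1)\cap C(\overline{B_1})$ using the uniform bound $\int_{B_1}G(x,y)\intd y\le C(1-|x|^2)^s$ together with Theorem \ref{thm:main-mp}.
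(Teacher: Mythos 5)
The paper gives no proof of Lemma~\ref{exi0}: it is quoted, together with Lemma~\ref{exi1}, directly from Bucur~\cite{MR3461641} as a preliminary, so there is no internal argument to compare yours against. I will therefore assess your proposal on its own terms.

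Your existence sketch follows the route of~\cite{MR3461641}: write $G=\Phi-\mathcal H$, represent $\mathcal H(x,y)=\int_{\mathbb R^n\setminus B_1}P(y,z)\Phi(x-z)\intd z$, dispose of the singular piece by pointwise Riesz-potential inversion on a $C^{2s+\epsilon}_c$ cutoff of $f$, and observe that the $\Phi$-potential of $F\mathbf 1_{\mathbb R^n\setminus B_1}$ is $s$-harmonic inside $B_1$. The bound $\int_{B_1}G(x,y)\intd y\le C(1-|x|^2)^s$ giving continuity up to $\partial B_1$ is correct and matches the paper's Proposition~\ref{prop:estG}. But you explicitly defer the two load-bearing lemmas — the exact cancellation of the diagonal singularity in $\mathcal H$ and the Poisson-kernel identity for $\mathcal H$ — with ``a computation \dots\ as in~\cite{MR3461641}''. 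As a self-contained proof that is a genuine gap, since those computations are essentially the content of the cited result; as a roadmap, it is the right one.

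For uniqueness, your argument is correct but takes the wrong tool. You invoke Theorem~\ref{thm:main-mp}, the paper's headline maximum principle, whose proof occupies Sections~3--6 via the Green-function and Poisson-kernel constructions. Its proof chain happens not to pass through Lemma~\ref{exi0} (it uses Lemma~\ref{exi1} via Lemma~\ref{lem:barrier}), so there is no literal circularity, but it is conceptually backwards to rest a Section~2 preliminary on Section~6's main theorem, and it forces you to verify condition~\eqref{cond1} needlessly. The intended tool is already available two paragraphs later in the same section: Theorem~\ref{thm:silvestre-mp} (Silvestre's classical maximum principle). A difference $w=u_1-u_2$ of two continuous pointwise solutions is continuous on $\mathbb R^n$, hence both lower and upper semi-continuous on $\overline{B_1}$, satisfies $(-\Delta)^sw=0$ pointwise and therefore in $\mathcal D'(B_1)$, and vanishes on $\mathbb R^n\setminus B_1$; applying Theorem~\ref{thm:silvestre-mp} to $w$ and to $-w$ gives $w\equiv0$ at once, with no mention of~\eqref{cond1}.
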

\begin{lemma}\label{exi1}
Let $0<s<1$, $g\in \mathcal{L}_{2s}\cap C(\mathbb{R}^n)$. Then there exists a unique continuous solution: $u=P\ast g$ that solves the following problem \eqref{pb2} pointwisely.
\begin{equation}\label{pb2}
\left\{
\begin{aligned}
(-\Delta)^s u&=0\quad &\text{in}\ &B_1,\\
u&=g\quad&\text{on}\ &\mathbb{R}^n\backslash B_1.
\end{aligned}
\right.
\end{equation}
\end{lemma}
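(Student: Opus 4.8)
The plan is to establish the two assertions of the lemma separately — that $u=P\ast g$ is a continuous pointwise solution of \eqref{pb2}, and that it is the only one. The existence part follows the computation of Bucur \cite{MR3461641}, and for uniqueness I would invoke the maximum principle (Theorem \ref{thm:main-mp}).

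\emph{Existence.} First I would check that $u=P\ast g$ is well defined. Fixing $x\in B_1$, the distance $|x-y|$ stays bounded below by $1-|x|$ for $y\notin B_1$, while $(|y|^2-1)^{-s}\simeq\dist(y,\partial B_1)^{-s}$ near $\partial B_1$ is integrable there since $s<1$, and $P(x,y)\simeq_x|y|^{-n-2s}$ for $|y|$ large; as $g\in\mathcal{L}_{2s}$, the product $P(x,\cdot)g(\cdot)$ is integrable on $\mathbb{R}^n\setminus B_1$. Dominated convergence then gives $u\in C(B_1)$, and $u=g$ on $\mathbb{R}^n\setminus B_1$ by definition; since $u$ is bounded on $\overline{B_1}$ we also get $u\in\mathcal{L}_{2s}$, so $(-\Delta)^su$ is meaningful. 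To get $(-\Delta)^su=0$ pointwise in $B_1$, I would use that for each fixed $y\notin B_1$ the map $x\mapsto P(x,y)$ is smooth and $s$-harmonic in $B_1$ (a direct computation, cf.\ \cite{MR3461641}), and then justify interchanging the principal-value integral defining $(-\Delta)^s$ with $\int(\cdot)\,g(y)\,\intd y$ using bounds on $P$ and its $x$-derivatives that are locally uniform on $B_1$ and decay like $|y|^{-n-2s}$ at infinity.

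\emph{Boundary continuity --- the main obstacle.} The crux is showing $u(x)\to g(x_0)$ as $B_1\ni x\to x_0\in\partial B_1$. I would start from the normalization $\int_{\mathbb{R}^n\setminus B_1}P(x,y)\,\intd y=1$ for all $x\in B_1$ (a classical Riesz identity; see \cite{MR3461641}), so that
\[
u(x)-g(x_0)=\int_{\mathbb{R}^n\setminus B_1}P(x,y)\bigl(g(y)-g(x_0)\bigr)\,\intd y ,
\]
and split at $|y-x_0|<\delta$ and $|y-x_0|\geq\delta$. On the near piece $|g(y)-g(x_0)|\leq\sup_{|y-x_0|<\delta}|g(y)-g(x_0)|$, which together with $P\geq0$ and the normalization is small by continuity of $g$. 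On the far piece $|x-y|\gtrsim\delta$ once $x$ is close to $x_0$, and after splitting further into a bounded annulus (where $(|y|^2-1)^{-s}$ is integrable and $g$ is locally bounded) and the region $|y-x_0|>2$ (where $P(x,y)\lesssim(1-|x|^2)^s|y|^{-n-2s}$, controlled by the $\mathcal{L}_{2s}$ bound on $g$), the whole contribution carries an overall factor $(1-|x|^2)^s\to0$. Letting $x\to x_0$ and then $\delta\to0$ gives $u(x)\to g(x_0)$; hence $u\in C(\mathbb{R}^n)$ and solves \eqref{pb2} pointwise. Keeping track of the dependence on $x_0$ and $\delta$ while taming the boundary singularity of $P$ is the technical heart of the argument.

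\emph{Uniqueness.} If $u_1,u_2$ are continuous solutions, set $w=u_1-u_2$; it is continuous on $\mathbb{R}^n$, $s$-harmonic in $B_1$, and vanishes on $\mathbb{R}^n\setminus B_1$, hence on $\partial B_1$. With $\omega$ the modulus of continuity of $w$, on the shell $B_1\setminus\Omega_\epsilon$ (with $\Omega=B_1$) we have $|w|\leq\omega(\epsilon)$ and the shell has volume $O(\epsilon)$, so $\epsilon^{-s}\int_{B_1\setminus\Omega_\epsilon}|w^{\pm}|\leq C\epsilon^{1-s}\omega(\epsilon)\to0$. Thus both $w$ and $-w$ satisfy the hypotheses of Theorem \ref{thm:main-mp} with $c=0$ on $B_1$ (which satisfies the uniform exterior ball condition), giving $w\geq0$ and $w\leq0$, i.e.\ $w\equiv0$.
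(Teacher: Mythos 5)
The paper does not prove this lemma; it is cited verbatim from Bucur \cite{MR3461641} as a preliminary. Your attempt must therefore be judged as a self-contained argument, and it contains two genuine problems.

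The uniqueness step is circular in the logical structure of this paper. You invoke Theorem~\ref{thm:main-mp}, but that maximum principle is established only in Section~\ref{sec.6}, via Remark~\ref{rmk:mp} and Theorem~\ref{thm:un}, which rely on the Poisson kernel on general domains (Theorem~\ref{thm:Pg}), hence on the Green function (Theorems~\ref{thm:Gf}, \ref{thm:ex-1}), whose construction in Lemma~\ref{lem:barrier} explicitly uses Lemma~\ref{exi1} --- the very statement you are proving. There is no need for any of this machinery: $w=u_1-u_2$ is continuous on $\mathbb{R}^n$, $s$-harmonic in $B_1$, and vanishes on $\mathbb{R}^n\setminus B_1$, so both $\pm w$ satisfy the hypotheses of Silvestre's maximum principle, Theorem~\ref{thm:silvestre-mp}, which is stated before this lemma and does not depend on it. That route also avoids your modulus-of-continuity estimate and condition~\eqref{cond0} entirely, since lower semi-continuity on $\overline{B_1}$ is all that Theorem~\ref{thm:silvestre-mp} requires.

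The existence step has a gap. The claim that for fixed $y\notin B_1$ the map $x\mapsto P(x,y)$ is $s$-harmonic in $B_1$ is false as stated: the operator is nonlocal, and the natural zero extension of $P(\cdot,y)$ outside $B_1$ gives
\begin{equation}
(-\Delta)^s\bigl[\chi_{B_1}P(\cdot,y)\bigr](x)=\frac{C_{n,s}}{|x-y|^{n+2s}}\neq 0\qquad\text{for }x\in B_1,
\end{equation}
because $\chi_{B_1}P(\cdot,y)$ is the $s$-harmonic extension of the boundary measure $\delta_y$, not of $0$. The identity $(-\Delta)^s u=0$ emerges only after this quantity is integrated against $g$ and canceled by the tail contribution $(-\Delta)^s[\chi_{\mathbb{R}^n\setminus B_1}g](x)=-C_{n,s}\int_{\mathbb{R}^n\setminus B_1}\frac{g(z)}{|x-z|^{n+2s}}\intd z$. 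So the interchange of $(-\Delta)^s$ with $\int(\cdot)\,g(y)\intd y$ must be done on the full function $u=P\ast g$, keeping both pieces, rather than ``kernel by kernel.'' Your boundary-continuity argument via the normalization $\int_{\mathbb{R}^n\setminus B_1}P(x,y)\intd y=1$ and the near/far split is correct and standard.
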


The classical maximum principle for fractional Laplacian is given in \cite{MR2270163} Lemma 2.17:
\begin{theorem}[Maximum principle]
\label{thm:silvestre-mp}
  Let $\Omega$ be a bounded domain in $\mathbb{R}^n$, $u$ be a lower semi-continuous function in $\overline{\Omega}$ such that
  \begin{equation}
  \begin{cases}
    (-\Delta)^s u\geqslant 0&\qquad\text{in}\quad\mathcal{D}^\prime(\Omega),\\
    u\geqslant 0\qquad\text{in}\quad\mathbb{R}^n\backslash\Omega.
    \end{cases}
  \end{equation}
  Then $u\geqslant 0$ in $\mathbb{R}^n$.
\end{theorem}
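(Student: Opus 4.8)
The plan is to argue by contradiction, the heart of the matter being that an $s$-supersolution cannot attain a strictly negative global minimum at an interior point. Suppose $u$ were negative somewhere. Since $u$ is lower semicontinuous on the compact set $\overline{\Omega}$ and $u\geqslant 0$ on $\mathbb{R}^n\setminus\Omega\supseteq\partial\Omega$, the value $m:=\inf_{\mathbb{R}^n}u$ is finite, strictly negative, and attained at some $x_0$ which then necessarily lies in the open set $\Omega$. Formally, from the pointwise formula \eqref{1.0},
\begin{equation*}
(-\Delta)^su(x_0)=C_{n,s}\,\mathrm{P.V.}\!\int_{\mathbb{R}^n}\frac{u(x_0)-u(y)}{|x_0-y|^{n+2s}}\intd y<0,
\end{equation*}
because $u(x_0)=m\leqslant u(y)$ for all $y$, with strict inequality on $\mathbb{R}^n\setminus\Omega$, a set of positive measure on which $u\geqslant 0>m$; this contradicts $(-\Delta)^su\geqslant 0$. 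The whole work is to make this legitimate when $u$ is only lower semicontinuous and the inequality $(-\Delta)^su\geqslant 0$ holds merely in $\mathcal{D}'(\Omega)$.

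I would make this rigorous through the super-mean-value inequality for $s$-supersolutions. Fix a ball $B_r(x_0)\Subset\Omega$. If $u$ were smooth, the Green function/Poisson kernel representation on the ball (Lemmas \ref{exi0} and \ref{exi1}, rescaled and translated) gives
\begin{equation*}
u(x_0)=\int_{\mathbb{R}^n\setminus B_r(x_0)}P_r(x_0,y)\,u(y)\intd y+\int_{B_r(x_0)}G_r(x_0,y)\,(-\Delta)^su(y)\intd y\geqslant\int_{\mathbb{R}^n\setminus B_r(x_0)}P_r(x_0,y)\,u(y)\intd y,
\end{equation*}
using $G_r\geqslant 0$ and $(-\Delta)^su\geqslant 0$ on $B_r(x_0)$. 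For the actual $u$ one recovers the same inequality by mollifying, $u_\delta:=u\ast\rho_\delta$ with $\rho_\delta\geqslant 0$ supported in $B_\delta$: for $\delta$ small, $u_\delta$ is smooth, $u_\delta\in\mathcal{L}_{2s}$, and $(-\Delta)^su_\delta(x)=\langle(-\Delta)^su,\rho_\delta(x-\cdot)\rangle\geqslant 0$ for $x\in B_r(x_0)$, since $(-\Delta)^su$ is a nonnegative distribution (hence Radon measure) on $\Omega$ and $\rho_\delta(x-\cdot)\geqslant 0$ is supported in $\Omega$; thus the displayed inequality holds for $u_\delta$. Letting $\delta\to 0$ and using $u_\delta\to u$ in $L^1_{\mathrm{loc}}$ together with $u_\delta\geqslant m$ and $u\in\mathcal{L}_{2s}$ to pass to the limit on the right, one obtains $u(x_0)\geqslant\int_{\mathbb{R}^n\setminus B_r(x_0)}P_r(x_0,y)u(y)\intd y$. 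Since the $s$-harmonic extension of the constant $1$ is $1$, we have $\int_{\mathbb{R}^n\setminus B_r(x_0)}P_r(x_0,\cdot)=1$; combined with $u\geqslant m=u(x_0)$ this forces equality, so $u\equiv m$ a.e.\ on $\mathbb{R}^n\setminus B_r(x_0)$, and letting $r\to 0$, $u\equiv m<0$ a.e.\ on $\mathbb{R}^n$ --- contradicting $u\geqslant 0$ on the positive-measure set $\mathbb{R}^n\setminus\Omega$.

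The step I expect to be the main obstacle is precisely this last passage: extracting the pointwise super-mean-value inequality at $x_0$ from the distributional hypothesis without circularly invoking a maximum principle, and in particular controlling $\lim_{\delta\to 0}u_\delta(x_0)$ when $u$ is merely lower semicontinuous (honestly one should first replace $u$ by its $s$-superharmonic representative, standard potential theory, for which every inequality above holds at the point $x_0$; in all the applications in this paper $u$ is continuous, where this is automatic). A conceptually cleaner but heavier alternative would be to lift to the Caffarelli--Silvestre extension $U$ of $u$ on $\mathbb{R}^{n+1}_+$, under which $(-\Delta)^su\geqslant 0$ becomes a one-sided condition on the weighted conormal derivative, and then conclude $U\geqslant 0$ --- hence $u\geqslant 0$ --- from the maximum principle and Hopf lemma for $\operatorname{div}(t^{1-2s}\nabla\,\cdot\,)$; this, however, uses machinery outside the present framework.
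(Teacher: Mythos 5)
The paper does not prove this theorem; it is quoted verbatim from Silvestre \cite{MR2270163}, Lemma~2.17, with no independent argument, so there is no internal proof to compare yours against. What follows reviews your argument on its own merits. Your strategy---contradiction from a strictly negative interior minimum, made rigorous by mollification and a super-mean-value inequality coming from the Poisson/Green representation on a small ball---is the standard route and conceptually sound. Two small points: the hypothesis $u\in\mathcal{L}_{2s}$, which you use, is not stated in the theorem but is necessary for $(-\Delta)^su$ to exist in $\mathcal{D}'(\Omega)$ in the paper's sense, so you are right to supply it; and producing the full representation $u_\delta(x_0)=P_r\ast u_\delta(x_0)+G_r\ast[(-\Delta)^su_\delta](x_0)$ from Lemmas~\ref{exi0}--\ref{exi1} tacitly uses a uniqueness statement on the ball, which for the smooth $u_\delta$ is the elementary pointwise maximum principle, so the reasoning is not circular, but this deserves a word.

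The genuine gap is the one you flag in your closing paragraph, and it is not cosmetic. From $u_\delta(x_0)\geqslant\int P_r(x_0,\cdot)\,u_\delta$ you conclude $u(x_0)\geqslant\int P_r(x_0,\cdot)\,u$. The right-hand side is fine (Fatou applied to $u_\delta-m\geqslant0$, using $u\in\mathcal{L}_{2s}$ and the decay $P_r(x_0,y)\lesssim|y|^{-n-2s}$), but on the left lower semicontinuity only yields $\liminf_{\delta\to0}u_\delta(x_0)\geqslant u(x_0)$, which is the useless direction; nothing forces $u_\delta(x_0)\to u(x_0)$. Indeed, read literally, the theorem is false: with $0\in\Omega$, the function $u\equiv0$ on $\mathbb{R}^n\setminus\{0\}$ and $u(0)=-1$ is lower semicontinuous, nonnegative off $\Omega$, and satisfies $(-\Delta)^su=0$ in $\mathcal{D}'(\Omega)$ since $u=0$ a.e., yet $u(0)<0$. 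The intended reading is that $u$ is (or is first replaced by) the canonical superharmonic representative $\lim_{\lambda\to0^+}u\ast\gamma_\lambda$ furnished by the paper's Theorem~\ref{thm:silvestra-semi} (applied to $-u$) and Proposition~\ref{prop:gamma}, for which the pointwise super-mean-value property at the minimum does hold and your limit passage closes. Your parenthetical remark that one should first pass to the $s$-superharmonic representative is precisely the needed repair; until that is carried out explicitly, the step ``one obtains $u(x_0)\geqslant\int_{\mathbb{R}^n\setminus B_r(x_0)}P_r(x_0,y)u(y)\intd y$'' is not justified.
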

The following technique also comes from \cite{MR2270163}:
 \begin{definition}
   Let $\Gamma$ be defined as follow
   \begin{equation}
     \Gamma(x)=\left\{
     \begin{aligned}
       &\Phi(x)=a(n,s)|x|^{2s-n}\quad&|x|\geqslant 1,\\
       &\frac{n-2s+2}{2}a(n,s)-\frac{n-2s}{2}a(n,s)|x|^2\quad&|x|<1.
     \end{aligned}\right.
   \end{equation}
 \end{definition}
 It is easy to check: $\Gamma\in C^{1,1}(\mathbb{R}^n)\cap\mathcal{L}_{2s}$. Therefore, one can define $\gamma=(-\Delta)^s\Gamma\in C(\mathbb{R}^n)$.
\begin{proposition}[Proposition 2.11 in \cite{MR2270163}]
\label{prop:gamma}
$\gamma\geqslant0$ and
\begin{equation}\int_{\mathbb{R}^n}\gamma(x) \intd x=1.\end{equation}
\end{proposition}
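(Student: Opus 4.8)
The plan is to exploit the explicit piecewise form of $\Gamma$ together with the distributional identity $(-\Delta)^s\Phi=\delta_0$. First I would observe that $\Gamma$ and $\Phi$ agree outside $B_1$, so $\Gamma-\Phi$ is supported in $\overline{B_1}$; since $\Gamma\in C^{1,1}(\mathbb{R}^n)$ while $\Phi$ is smooth away from the origin, the function $\Gamma-\Phi$ is bounded and compactly supported, hence lies in $L^1(\mathbb{R}^n)$ and in every $\mathcal{L}_\alpha$. Writing $\gamma=(-\Delta)^s\Gamma=(-\Delta)^s\Phi+(-\Delta)^s(\Gamma-\Phi)=\delta_0+(-\Delta)^s(\Gamma-\Phi)$ in the distributional sense, and noting that $\gamma$ is in fact a continuous function (as stated in the excerpt), the Dirac mass must be absorbed: the point is that near the origin $\Gamma$ is the smooth quadratic $\frac{n-2s+2}{2}a(n,s)-\frac{n-2s}{2}a(n,s)|x|^2$, so $(-\Delta)^s\Gamma$ is genuinely continuous there and the $\delta_0$ gets cancelled by the singular part of $(-\Delta)^s(\Gamma-\Phi)$. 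This is a sanity check rather than the computation of the mass.

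For the sign $\gamma\geqslant 0$, I would argue pointwise from the singular-integral representation. For $|x|>1$, $\Gamma$ coincides with $\Phi$ in a neighbourhood of $x$ and $(-\Delta)^s\Gamma(x)=C_{n,s}\,\mathrm{P.V.}\int (\Gamma(x)-\Gamma(y))|x-y|^{-n-2s}\intd y$; since $\Gamma\le\Phi$ everywhere (the quadratic cap lies below the singular profile — this needs a one-line check comparing $\frac{n-2s+2}{2}a(n,s)-\frac{n-2s}{2}a(n,s)|x|^2$ with $a(n,s)|x|^{2s-n}$ on $|x|\le 1$, both being $a(n,s)$ at $|x|=1$ with the right derivative inequality) and $(-\Delta)^s\Phi(x)=0$ for $x\ne 0$, we get $(-\Delta)^s\Gamma(x)=(-\Delta)^s\Phi(x)-C_{n,s}\int_{B_1}\big(\Phi(y)-\Gamma(y)\big)|x-y|^{-n-2s}\intd y\ge 0$. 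For $|x|\le 1$ one uses that $\Gamma$ attains its maximum on $\overline{B_1}$ at $x=0$ and is radially decreasing there, combined again with $\Gamma\le\Phi$ outside, to see every contribution to the principal-value integral is nonnegative; here one should be slightly careful that the P.V. is needed only where $\Gamma$ is $C^{1,1}$, so the integral converges absolutely and the sign analysis is valid.

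For the normalization $\int_{\mathbb{R}^n}\gamma=1$, the cleanest route is to test against a sequence of cutoffs. Fix $\varphi\in C_0^\infty(\mathbb{R}^n)$ with $\varphi\equiv 1$ on $B_1$ and set $\varphi_R(x)=\varphi(x/R)$; then $\int\gamma\,\varphi_R=\int\Gamma\,(-\Delta)^s\varphi_R$ by the definition of the distributional fractional Laplacian, and I would show the left side tends to $\int_{\mathbb{R}^n}\gamma$ (using $\gamma\ge 0$, monotone convergence, once we know $\gamma\in L^1$) while the right side tends to $1$. The convergence of the right side is where the main work lies: $(-\Delta)^s\varphi_R(x)=R^{-2s}\big((-\Delta)^s\varphi\big)(x/R)$, and pairing this against $\Gamma(x)=a(n,s)|x|^{2s-n}$ for large $|x|$, the substitution $x=Rz$ turns $\int\Gamma(x)(-\Delta)^s\varphi_R(x)\intd x$ into $\int a(n,s)|z|^{2s-n}(-\Delta)^s\varphi(z)\intd z$ exactly (by the scaling homogeneity of $|x|^{2s-n}$, degree $2s-n$, matched with the Jacobian $R^n$ and the factor $R^{-2s}$), plus an error from the region where $\Gamma\ne\Phi$ which is $O(R^{-2s})\to 0$. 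Finally $\int a(n,s)|z|^{2s-n}(-\Delta)^s\varphi(z)\intd z=\langle(-\Delta)^s\Phi,\varphi\rangle=\varphi(0)=1$. The delicate point to get right is the interchange of limits and the decay of $(-\Delta)^s\varphi_R$ at infinity (it decays like $|x|^{-n-2s}$, which is integrable against $|x|^{2s-n}$ only marginally, so one must check the tail contribution carefully); alternatively one can avoid this by noting $\Gamma-\Phi\in L^1$ has $(-\Delta)^s$-action integrating to zero and invoking $(-\Delta)^s\Phi=\delta_0$ directly, which I expect to be the shortest rigorous path and the one I would ultimately write up. I anticipate this normalization step, specifically controlling the far-field pairing, to be the main obstacle; the sign is comparatively routine once the inequality $\Gamma\le\Phi$ is recorded.
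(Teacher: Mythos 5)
The paper supplies no proof of this proposition --- it simply cites Proposition~2.11 of Silvestre \cite{MR2270163} --- so there is no in-paper argument to compare against, only the correctness of your attempt to assess. Your normalization argument via dilated cutoffs $\varphi_R$ is essentially sound: the scaling identity $(-\Delta)^s\varphi_R(x)=R^{-2s}((-\Delta)^s\varphi)(x/R)$, the homogeneity of $\Phi$, and the $O(R^{-2s})$ error from the region $\{|x|<R\}$ where $\Gamma\neq\Phi$ do give $\int\Gamma\,(-\Delta)^s\varphi_R\to 1$, and the tail-integrability worry you flag is actually a non-issue since the far-field integrand decays like $|x|^{-2n}$. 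But the ``shortest rigorous path'' you propose at the end does not work: $\Gamma-\Phi$ is integrable but \emph{not} bounded (it has a $-a(n,s)|x|^{2s-n}$ singularity at the origin), and $(-\Delta)^s(\Gamma-\Phi)=\gamma-\delta_0$ still carries the Dirac mass, so asserting that this distribution ``integrates to zero'' is exactly the statement $\int\gamma=1$ to be proved, not a route to it.

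The substantive gap is in showing $\gamma\geqslant 0$ on $B_1$. For $|x|>1$ the argument is right up to a sign slip: the identity should read $(-\Delta)^s\Gamma(x)=(-\Delta)^s\Phi(x)+C_{n,s}\int_{B_1}\frac{\Phi(y)-\Gamma(y)}{|x-y|^{n+2s}}\intd y$; with the minus sign you wrote, the right side would be $\leqslant 0$, contradicting your own conclusion. For $0<|x|<1$, however, the assertion that ``every contribution to the principal-value integral is nonnegative'' is simply false. Since $\Gamma$ is strictly radially decreasing on $B_1$, one has $\Gamma(x)-\Gamma(y)<0$ for every $y$ with $|y|<|x|$, so the integrand $(\Gamma(x)-\Gamma(y))/|x-y|^{n+2s}$ is negative on a set of positive measure near the origin; the maximum-at-zero observation handles only the single point $x=0$. (Your parenthetical claim that the integral converges absolutely is also wrong when $s\geqslant 1/2$; the principal value is genuinely needed even where $\Gamma$ is $C^{1,1}$.) Closing this gap requires real additional work --- for instance splitting the integral over $B_1$ and its complement, pairing $y$ with $2x-y$ on the portion of $B_1$ symmetric about $x$ to cancel the odd first-order part, and estimating the remaining asymmetric piece together with the exterior contribution --- so the sign at interior points is not ``comparatively routine'' as you suggest; it is where the main difficulty of the proposition lies.
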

\begin{proposition}[Proposition 2.13 in \cite{MR2270163}]
  Define $\gamma_\lambda(x)=\frac{1}{\lambda^n}\gamma(\frac{x}{\lambda})$. Then $\gamma_\lambda$ is an approximation of Dirac $\delta$ as $\lambda\rightarrow 0^+$.
\end{proposition}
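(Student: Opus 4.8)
The plan is to show that the family $\{\gamma_\lambda\}_{\lambda>0}$ is an approximate identity in the usual sense — it is nonnegative, has unit total mass, and its mass concentrates at the origin as $\lambda\to 0^+$ — and then to deduce from these three properties the convergence $\gamma_\lambda\ast f\to f$ (pointwise at points of continuity of a bounded continuous $f$, uniformly on compact sets when $f$ is uniformly continuous, and in particular $\gamma_\lambda\to\delta_0$ in $\mathcal{D}'(\mathbb{R}^n)$) by the classical mollifier argument.

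First I would record the two facts that scale directly off Proposition \ref{prop:gamma}. Nonnegativity is immediate: $\gamma_\lambda(x)=\lambda^{-n}\gamma(x/\lambda)\geq 0$ since $\gamma\geq 0$. Normalization follows from the change of variables $y=x/\lambda$,
\[
\int_{\mathbb{R}^n}\gamma_\lambda(x)\intd x=\int_{\mathbb{R}^n}\gamma(y)\intd y=1\qquad\text{for every }\lambda>0,
\]
which in particular shows $\gamma\in L^1(\mathbb{R}^n)$.

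Next I would prove the concentration property. For any fixed $\delta>0$ the same substitution gives
\[
\int_{\{|x|>\delta\}}\gamma_\lambda(x)\intd x=\int_{\{|y|>\delta/\lambda\}}\gamma(y)\intd y,
\]
and the right-hand side tends to $0$ as $\lambda\to 0^+$ because the tail of the $L^1$ function $\gamma$ over $\{|y|>R\}$ vanishes as $R\to\infty$. This is the only step with any content, and it reduces immediately to the integrability of $\gamma$, i.e.\ to Proposition \ref{prop:gamma}, so I anticipate no genuine difficulty. If a quantitative decay rate were wanted one could note that on $\mathbb{R}^n\setminus B_1$ the function $\Gamma$ coincides with $\Phi$, which is $s$-harmonic away from the origin, whence for $|x|>1$
\[
\gamma(x)=(-\Delta)^s\Gamma(x)=-C_{n,s}\int_{B_1}\frac{(\Gamma-\Phi)(y)}{|x-y|^{n+2s}}\intd y=O\!\left(|x|^{-n-2s}\right),
\]
but this refinement is not needed for the statement.

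Finally I would assemble the standard argument: given a bounded continuous $f$ on $\mathbb{R}^n$ and a point $x_0$ at which $f$ is continuous, write
\[
(\gamma_\lambda\ast f)(x_0)-f(x_0)=\int_{\mathbb{R}^n}\gamma_\lambda(z)\bigl(f(x_0-z)-f(x_0)\bigr)\intd z,
\]
split the integral into $\{|z|\leq\delta\}$ and $\{|z|>\delta\}$, bound the first piece by $\sup_{|z|\leq\delta}|f(x_0-z)-f(x_0)|$ using unit mass and nonnegativity, and the second by $2\|f\|_{L^\infty}\int_{\{|z|>\delta\}}\gamma_\lambda$; letting $\lambda\to 0^+$ and then $\delta\to 0^+$ gives the claim, with the convergence uniform in $x_0$ over compact sets when $f$ is uniformly continuous. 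Testing against $\varphi\in C_0^\infty(\mathbb{R}^n)$ then yields $\gamma_\lambda\to\delta_0$ in the sense of distributions. In short, the proof is entirely a scaling computation together with the integrability of $\gamma$ furnished by Proposition \ref{prop:gamma}.
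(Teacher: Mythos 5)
Your argument is correct and is precisely the standard approximate-identity argument; the paper itself does not reprove this, but imports it verbatim as Proposition 2.13 from \cite{MR2270163}, where the proof runs along the same lines (nonnegativity and unit mass from Proposition~\ref{prop:gamma}, tail decay of $\gamma$ giving concentration, then the usual split-integral estimate). Your optional quantitative observation that $\gamma(x)=O(|x|^{-n-2s})$ for large $|x|$, via writing $\gamma(x)=-C_{n,s}\int_{B_1}\frac{\Gamma(y)-\Phi(y)}{|x-y|^{n+2s}}\intd y$ for $|x|>1$, is also accurate and matches the reasoning in the source, though as you say the mere integrability of $\gamma$ suffices.
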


\begin{theorem}[Proposition 2.15 in \cite{MR2270163}]
\label{thm:silvestra-semi}
 Let $\Omega$ be a domain in $\mathbb{R}^n$. If $u\in\mathcal{L}_{2s}$ satisfies
  \begin{equation}
    (-\Delta)^s u\leqslant0\quad\text{in}\quad \mathcal{D}^\prime(\Omega).
  \end{equation}
  Then, there exists a verson $\tilde{u}$ of $u$ (i.e. $u=\tilde{u}$ a.e. $\mathbb{R}^n$), such that
   $\tilde{u}$ is upper semi-continuous in $\Omega$.

   Indeed,
   \begin{equation}
     \tilde{u}=\lim_{\lambda\rightarrow 0^+}u\ast\gamma_{\lambda}.
   \end{equation}
\end{theorem}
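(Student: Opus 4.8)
The plan is to prove that, for every fixed $x\in\Omega$, the map $\lambda\mapsto(u\ast\gamma_\lambda)(x)$ is nondecreasing on $\big(0,\dist(x,\partial\Omega)\big)$; then $\tilde u(x):=\lim_{\lambda\to 0^+}(u\ast\gamma_\lambda)(x)$ exists, equals $\inf_{\lambda}(u\ast\gamma_\lambda)(x)$, and both the identity $\tilde u=u$ a.e.\ and the upper semicontinuity of $\tilde u$ follow readily. As a preliminary I would record that every $u\ast\gamma_\lambda$ is finite and continuous on $\mathbb R^n$: since $\Gamma\equiv\Phi$ outside $B_1$ and $(-\Delta)^s\Phi=0$ there, the density $\gamma=(-\Delta)^s\Gamma$ is continuous and decays like $|x|^{-n-2s}$, so $\gamma_\lambda(z)\le C\min\!\big(\lambda^{-n},\,\lambda^{2s}|z|^{-n-2s}\big)$; this majorant is integrable against $|u|$ when $u\in\mathcal L_{2s}\subset L^1_{\mathrm{loc}}$, and continuity of $u\ast\gamma_\lambda$ then follows by dominated convergence.

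The heart of the proof is the monotonicity. Fix $x\in\Omega$ and $0<\mu<\lambda<\dist(x,\partial\Omega)$, and put $\Gamma_\lambda(z):=\lambda^{2s-n}\Gamma(z/\lambda)$, so that by the scaling of $(-\Delta)^s$ one has $(-\Delta)^s\Gamma_\lambda=\gamma_\lambda$ pointwise as a continuous function, and $\Gamma_\lambda\equiv\Phi$ on $\mathbb R^n\setminus B_\lambda$. Then $w:=\Gamma_\mu-\Gamma_\lambda$ lies in $C^{1,1}(\mathbb R^n)$, is supported in $\overline{B_\lambda}$, satisfies $(-\Delta)^sw=\gamma_\mu-\gamma_\lambda$, and is nonnegative --- the last point from a short computation with the explicit piecewise formula for $\Gamma$, which shows that $\lambda\mapsto\Gamma_\lambda(z)$ is pointwise nonincreasing (this is where the hypothesis $n>2s$ enters). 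Centering $w$ at $x$ and mollifying, $\varphi_\varepsilon(y):=(w\ast\rho_\varepsilon)(y-x)$ is, for $\varepsilon$ small, a nonnegative function in $C_0^\infty(\Omega)$ with $(-\Delta)^s\varphi_\varepsilon(y)=\big((\gamma_\mu-\gamma_\lambda)\ast\rho_\varepsilon\big)(y-x)$. Testing the distributional inequality $(-\Delta)^su\le 0$ in $\mathcal D'(\Omega)$ against $\varphi_\varepsilon$ yields $\int u(y)\,(-\Delta)^s\varphi_\varepsilon(y)\intd y\le 0$; letting $\varepsilon\to 0^+$ --- using the uniform convergence of the mollifications, the integrable tail bound from the preliminary step, and the evenness of $\gamma_\mu,\gamma_\lambda$ to pass the limit inside the integral --- one obtains $(u\ast\gamma_\mu)(x)-(u\ast\gamma_\lambda)(x)\le 0$, which is the claimed monotonicity.

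Granting this, for each $x\in\Omega$ the limit $\tilde u(x):=\lim_{\lambda\to 0^+}(u\ast\gamma_\lambda)(x)=\inf_{0<\lambda<\dist(x,\partial\Omega)}(u\ast\gamma_\lambda)(x)$ exists in $[-\infty,+\infty)$, and $\tilde u(x)\le(u\ast\gamma_{\lambda_0})(x)<+\infty$ for any admissible $\lambda_0$. To identify $\tilde u$ with $u$, I would use that $\gamma_\lambda$ is an approximation of the identity (the preceding proposition) together with $u\in L^1_{\mathrm{loc}}$ to conclude $u\ast\gamma_\lambda\to u$ in $L^1(B)$ for every $B\Subset\Omega$; along a sequence $\lambda_k\to 0$ for which the convergence also holds almost everywhere on $B$, comparison with the everywhere-defined limit $\tilde u$ forces $\tilde u=u$ a.e.\ on $\Omega$, so in particular $\tilde u$ is finite a.e. Finally, on any $B\Subset\Omega$ the representation $\tilde u=\inf_{0<\lambda<\dist(B,\partial\Omega)}(u\ast\gamma_\lambda)$ exhibits $\tilde u|_B$ as an infimum of functions continuous on $B$, hence upper semicontinuous; since upper semicontinuity is a local property, $\tilde u$ is upper semicontinuous on $\Omega$, and the formula $\tilde u=\lim_{\lambda\to0^+}u\ast\gamma_\lambda$ is exactly the one asserted.

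I expect the main obstacle to lie in the second paragraph. One must verify the elementary but unavoidable monotonicity $\lambda\mapsto\Gamma_\lambda$ (in which $n>2s$ is needed), and, more delicately, one must justify that the distributional inequality for $u$ may be tested against the mollified translate of $\Gamma_\mu-\Gamma_\lambda$ --- a function that is compactly supported but only $C^{1,1}$, so that mollification is genuinely needed to produce an admissible test function --- and that the limit $\varepsilon\to 0^+$ may be taken inside the integral. This last point is where the $|x|^{-n-2s}$ decay of $\gamma$, i.e.\ the tail control from the preliminary step, is indispensable: for $u$ merely in $\mathcal L_{2s}$, neither $u\ast\gamma_\lambda$ nor the convolutions appearing in the limiting argument would otherwise be under control.
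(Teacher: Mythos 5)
Your argument is correct and is essentially the argument used in the source the paper cites for this statement, namely Silvestre's Proposition 2.15 in \cite{MR2270163}; the paper itself gives no proof, only the citation. The key monotonicity $\Gamma_\mu\geqslant\Gamma_\lambda$ for $\mu<\lambda$ (hence $u\ast\gamma_\mu\leqslant u\ast\gamma_\lambda$ after testing the distributional inequality against mollified translates of $\Gamma_\mu-\Gamma_\lambda$), together with the approximation-of-identity step and the fact that a decreasing limit of continuous functions is upper semicontinuous, is exactly the standard proof, and you correctly flag where the decay of $\gamma$ and the implicit restriction $n>2s$ enter.
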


The following result proved in \cite{MR4129482} is also crucial in our progress.
\begin{theorem}[Theorem 5.4 in \cite{MR4129482}]
\label{thm:maxcons}
Let $\Omega$ be a domain in $\mathbb{R}^n$. Assume $u,v\in\mathcal{L}_{2s}$, $f, g\in L^1_{\rm loc}(\Omega)$, and satisfy
\begin{equation}
\left.\begin{aligned}
& (-\Delta)^s u(x)+\vec{b}(x)\cdot\nabla u(x)+c(x)u(x)\leqslant f(x)\\
& (-\Delta)^s v(x)+\vec{b}(x)\cdot\nabla v(x)+c(x)v(x)\leqslant g(x)
\end{aligned}\right\} \qquad \text{in}\quad \mathcal{D}'(\Omega),
\end{equation}
where $\|\vec b(x)\|_{C^1(\Omega)}+\|c(x)\|_{L^\infty(\Omega)}<\infty$.  Then for $w(x)=\max\{u(x),v(x)\}$, it holds that
\begin{equation}
(-\Delta)^sw(x)+\vec{b}(x)\cdot\nabla w(x)+c(x)w(x)\leq f(x)\chi_{u> v} + g(x)\chi_{u<v}+\max\{f(x),g(x)\}\chi_{u=v}\ {\rm in}\ \mathcal{D}'(\Omega).
\end{equation}
\end{theorem}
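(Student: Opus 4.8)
The plan is to reduce to smooth data by mollification, prove the inequality pointwise almost everywhere for the mollified functions (where the principal-value integral can be compared directly), and then pass to the limit in $\mathcal{D}'(\Omega)$. Fix a standard mollifier $\rho_\lambda\in C_0^\infty(B_\lambda)$, $\rho_\lambda\ge0$, and set $u_\lambda=u\ast\rho_\lambda$, $v_\lambda=v\ast\rho_\lambda$; these lie in $C^\infty(\mathbb{R}^n)\cap\mathcal{L}_{2s}$, and $(-\Delta)^su_\lambda$ is a continuous function which, as a distribution, equals $\big((-\Delta)^su\big)\ast\rho_\lambda$. Writing the hypothesis on $u$ as $f-(-\Delta)^su-\vec b\cdot\nabla u-cu=\mu\ge0$, a nonnegative Radon measure on $\Omega$, and convolving with $\rho_\lambda$, I obtain for every $x$ in the interior layer $\Omega_\lambda=\{x:\overline{B_\lambda(x)}\subset\Omega\}$
\[
(-\Delta)^su_\lambda(x)+(\vec b\cdot\nabla u)\ast\rho_\lambda(x)+(cu)\ast\rho_\lambda(x)\ \le\ (f\ast\rho_\lambda)(x),
\]
since $\mu\ast\rho_\lambda\ge0$ there. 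Replacing $(\vec b\cdot\nabla u)\ast\rho_\lambda$ by $\vec b\cdot\nabla u_\lambda$ and $(cu)\ast\rho_\lambda$ by $c\,u_\lambda$ introduces two commutator errors that tend to $0$ in $L^1_{\rm loc}(\Omega)$ (the first by a standard commutator estimate of DiPerna--Lions type, valid since $\vec b\in C^1$; the second because both terms converge to $cu$). Hence, pointwise on $\Omega_\lambda$,
\[
(-\Delta)^su_\lambda+\vec b\cdot\nabla u_\lambda+c\,u_\lambda\ \le\ f_\lambda,\qquad f_\lambda\to f\ \text{in}\ L^1_{\rm loc}(\Omega),
\]
and likewise $(-\Delta)^sv_\lambda+\vec b\cdot\nabla v_\lambda+c\,v_\lambda\le g_\lambda$ with $g_\lambda\to g$ in $L^1_{\rm loc}(\Omega)$.

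Next I would prove the max-inequality for the smooth pair $w_\lambda=\max\{u_\lambda,v_\lambda\}$. On the open set $\{u_\lambda>v_\lambda\}$ the function $w_\lambda$ coincides with $u_\lambda$ near each point, so $\nabla w_\lambda=\nabla u_\lambda$ and $w_\lambda=u_\lambda$ there, while $w_\lambda\ge u_\lambda$ globally with equality at the base point forces $(-\Delta)^sw_\lambda(x)\le(-\Delta)^su_\lambda(x)$; thus the operator applied to $w_\lambda$ is $\le f_\lambda$ there, and symmetrically $\le g_\lambda$ on $\{u_\lambda<v_\lambda\}$. On $E_\lambda=\{u_\lambda=v_\lambda\}$ I use that at every density-one point $x$ of $E_\lambda$ one has $\nabla u_\lambda(x)=\nabla v_\lambda(x)$ (the zero set of the smooth function $u_\lambda-v_\lambda$ has density zero at a point where its gradient is nonzero), so the second-order Taylor polynomials of $u_\lambda$ and $v_\lambda$ at $x$ share the same affine part; then $w_\lambda$ is differentiable at $x$ with $\nabla w_\lambda(x)=\nabla u_\lambda(x)$, the principal value defining $(-\Delta)^sw_\lambda(x)$ is absolutely convergent after subtracting that affine part (the integrand near $x$ is $O(|y-x|^{2-n-2s})$, integrable since $s<1$), and $(-\Delta)^sw_\lambda(x)\le\min\{(-\Delta)^su_\lambda(x),(-\Delta)^sv_\lambda(x)\}$; combined with $c\,w_\lambda=c\,u_\lambda=c\,v_\lambda$ at $x$ one gets the operator on $w_\lambda$ at $x$ bounded by $\min\{f_\lambda(x),g_\lambda(x)\}\le\max\{f_\lambda(x),g_\lambda(x)\}$. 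Since almost every point of $E_\lambda$ is a density-one point, we arrive at
\[
(-\Delta)^sw_\lambda+\vec b\cdot\nabla w_\lambda+c\,w_\lambda\ \le\ h_\lambda:=f_\lambda\chi_{\{u_\lambda>v_\lambda\}}+g_\lambda\chi_{\{u_\lambda<v_\lambda\}}+\max\{f_\lambda,g_\lambda\}\chi_{E_\lambda}
\]
for a.e.\ $x\in\Omega_\lambda$, where $(-\Delta)^sw_\lambda(x)$ is the pointwise principal value; that this a.e.\ inequality upgrades to one between distributions on $\Omega_\lambda$ is the delicate point, addressed below.

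Finally I would let $\lambda\to0$. Along a subsequence, $u_\lambda,v_\lambda,f_\lambda,g_\lambda\to u,v,f,g$ both in $L^1_{\rm loc}$ and a.e.; moreover $w_\lambda\to w:=\max\{u,v\}$ in $L^1_{\rm loc}$ and, by dominated convergence against the weight $(1+|y|^{n+2s})^{-1}$ (using $|w_\lambda|\le|u|\ast\rho_\lambda+|v|\ast\rho_\lambda$), in $\mathcal{L}_{2s}$; hence $(-\Delta)^sw_\lambda\to(-\Delta)^sw$, $\vec b\cdot\nabla w_\lambda\to\vec b\cdot\nabla w$ and $c\,w_\lambda\to c\,w$ in $\mathcal{D}'(\Omega)$. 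On the right, $h_\lambda\le\max\{f_\lambda,g_\lambda\}\to\max\{f,g\}$ in $L^1_{\rm loc}$, while $\limsup_\lambda h_\lambda\le h:=f\chi_{\{u>v\}}+g\chi_{\{u<v\}}+\max\{f,g\}\chi_{\{u=v\}}$ a.e.\ (on $\{u>v\}$ and $\{u<v\}$ the indicators stabilize and $f_\lambda,g_\lambda\to f,g$; on $\{u=v\}$ use the uniform bound $\max\{f_\lambda,g_\lambda\}$). Applying Fatou's lemma to the nonnegative functions $\max\{f_\lambda,g_\lambda\}\varphi-h_\lambda\varphi$ gives $\limsup_\lambda\langle h_\lambda,\varphi\rangle\le\langle h,\varphi\rangle$ for every $0\le\varphi\in C_0^\infty(\Omega)$. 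Testing the $\Omega_\lambda$-inequality against such a $\varphi$ (which lies in $C_0^\infty(\Omega_\lambda)$ once $\lambda$ is small) and passing to the limit yields $(-\Delta)^sw+\vec b\cdot\nabla w+c\,w\le h$ in $\mathcal{D}'(\Omega)$, which is the assertion.

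The step I expect to be the main obstacle is the one flagged above: passing from the a.e.\ pointwise inequality for $w_\lambda$ to an inequality between distributions, which is genuinely delicate when $s\ge\tfrac12$, since then $w_\lambda$ fails to be $C^1$ across the crease $\{u_\lambda=v_\lambda,\ \nabla u_\lambda\ne\nabla v_\lambda\}$ and $(-\Delta)^sw_\lambda$ can a priori carry a singular part there. The resolution is that $w_\lambda$, being a maximum of two smooth functions, is locally semiconvex, hence twice differentiable almost everywhere (Alexandrov) and with distributional Hessian having nonnegative singular part; therefore the canonical distribution $(-\Delta)^sw_\lambda$ (namely $\varphi\mapsto\int w_\lambda(-\Delta)^s\varphi$) differs from its a.e.\ pointwise value by a nonpositive distribution, which only reinforces ``$\le$'', so the distributional inequality does follow. (Equivalently one can invoke the Kato-type inequality $(-\Delta)^s\psi^+\le\chi_{\{\psi>0\}}(-\Delta)^s\psi$ with $\psi=v_\lambda-u_\lambda$, or mollify once more to reduce to the case $s<\tfrac12$; and the regularization around Theorem~\ref{thm:silvestra-semi} is available if needed.) The commutator estimate and the Fatou passage to the limit are routine.
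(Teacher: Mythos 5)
Note first that the paper does not prove this statement; it is imported verbatim as Theorem~5.4 of \cite{MR4129482}, so there is no in-paper proof to compare against. Judging the proposal on its own terms: the overall mollify--prove-pointwise--pass-to-the-limit scheme is sensible, and the commutator handling, the convergence $w_\lambda\to w$ in $\mathcal{L}_{2s}$, and the Fatou argument for the right-hand side are all correct in outline.

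The genuine gap is exactly where you flag it, and none of the three proposed resolutions closes it. After establishing the pointwise a.e.\ inequality $(-\Delta)^s w_\lambda + \vec b\cdot\nabla w_\lambda + c\,w_\lambda \le h_\lambda$ for the merely Lipschitz function $w_\lambda=\max\{u_\lambda,v_\lambda\}$, you need the distribution $(-\Delta)^s w_\lambda$ (i.e.\ $\varphi\mapsto\int w_\lambda(-\Delta)^s\varphi$) to be bounded above by the a.e.\ pointwise principal value. Your first argument --- $w_\lambda$ is semiconvex, Alexandrov twice differentiable a.e., with nonnegative singular Hessian, ``therefore the canonical distribution $(-\Delta)^s w_\lambda$ differs from its a.e.\ pointwise value by a nonpositive distribution'' --- is exactly the reasoning that works for $-\Delta$, where $-\Delta w_\lambda=-\mathrm{tr}(D^2 w_\lambda)$ transfers the sign of the singular Hessian directly; but $(-\Delta)^s$ with $s<1$ is \emph{not} a pointwise trace of $D^2$, so the conclusion is an assertion requiring a proof of its own, not a corollary of Alexandrov's theorem. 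The second alternative, invoking Kato's inequality $(-\Delta)^s\psi^+\le\chi_{\{\psi>0\}}(-\Delta)^s\psi$ with $\psi=v_\lambda-u_\lambda$, is circular at this level: for smooth $\psi$ the function $\psi^+$ is again only Lipschitz, so that inequality in $\mathcal{D}'$ has precisely the same distribution-versus-a.e.\ pointwise difficulty you are trying to circumvent. And ``mollify once more to reduce to the case $s<\tfrac12$'' does nothing: convolving $w_\lambda$ with another mollifier changes neither $s$ nor the obstacle, it just re-smooths and returns you to the starting point.

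A clean way to actually close this, staying inside your scheme, is to never apply $(-\Delta)^s$ to a nonsmooth function: replace the max by the smooth convex regularization $M_\delta(a,b)=\tfrac{a+b}{2}+\tfrac12\sqrt{(a-b)^2+\delta^2}$ and set $w_\lambda^\delta=M_\delta(u_\lambda,v_\lambda)\in C^\infty$. Since $M_\delta$ is convex, nondecreasing in each argument, and $\partial_aM_\delta+\partial_bM_\delta\equiv1$, the convexity inequality $M_\delta(u_\lambda(x),v_\lambda(x))-M_\delta(u_\lambda(y),v_\lambda(y))\le\partial_aM_\delta(x)\,(u_\lambda(x)-u_\lambda(y))+\partial_bM_\delta(x)\,(v_\lambda(x)-v_\lambda(y))$ integrates against the kernel (legitimately, all functions now smooth) to give $(-\Delta)^sw_\lambda^\delta\le\partial_aM_\delta\,(-\Delta)^su_\lambda+\partial_bM_\delta\,(-\Delta)^sv_\lambda$ pointwise, and the chain rule handles $\vec b\cdot\nabla w_\lambda^\delta$ and $c\,w_\lambda^\delta$ up to an $O(\delta)$ error from $0\le M_\delta-(\partial_aM_\delta\,a+\partial_bM_\delta\,b)\le\delta/2$. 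This yields a genuine classical (hence distributional) inequality with right side $\partial_aM_\delta\,f_\lambda+\partial_bM_\delta\,g_\lambda+O(\delta)$, and letting $\delta\to0$ gives, with no singular-part bookkeeping, $(-\Delta)^sw_\lambda+\vec b\cdot\nabla w_\lambda+c\,w_\lambda\le f_\lambda\chi_{\{u_\lambda>v_\lambda\}}+g_\lambda\chi_{\{u_\lambda<v_\lambda\}}+\tfrac{f_\lambda+g_\lambda}{2}\chi_{\{u_\lambda=v_\lambda\}}\le h_\lambda$ in $\mathcal{D}'(\Omega_\lambda)$. Your $\lambda\to0$ argument then completes the proof.
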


\section{The construction of Green function for general domains}
\label{sec.3}
In this  section, we will use Perron's method to show the existence of Green functions for domains with sufficiently regular boundary. Indeed, we aim to prove the following theorem:
\begin{theorem}
\label{thm:ex-1}
  Let $\Omega$ be a bounded domain in $\mathbb{R}^n$ satisfying the uniform exterior ball condition. Assume that $0<s<1$. Then for any $y\in\Omega$, there exists a function $h_y\in C(\mathbb{R}^n)\cap C^{\infty}(\Omega)$ such that
  \begin{equation}
  \label{pbg}
  \left\{
    \begin{array}{llr}
      (-\Delta)^sh_y=0&\quad\text{in}\quad&\mathcal{D}^\prime(\Omega),\\
      \displaystyle h_y=g_y&\quad\text{on}\quad& \mathbb{R}^n\backslash\Omega,
    \end{array}\right.
  \end{equation}
  for $g_y(x)=\frac{a(n,s)}{|x-y|^{n-2s}}$.
\end{theorem}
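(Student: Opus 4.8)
The plan is to obtain $h_y$ by Perron's method, writing $g:=g_y=\Phi(\cdot-y)$. Since $y\in\Omega$ we have $g\in C(\mathbb R^n\setminus\{y\})\cap\mathcal L_{2s}$, $g>0$, $g$ is bounded on $\mathbb R^n\setminus\Omega$, and $(-\Delta)^sg=\delta_y$, so $g$ is a supersolution in any open set avoiding $y$, in particular in $\Omega$. Let $\mathcal S$ be the set of $v\in\mathcal L_{2s}$ that are upper semicontinuous on $\overline\Omega$, satisfy $(-\Delta)^sv\leqslant0$ in $\mathcal D'(\Omega)$, and satisfy $v\leqslant g$ on $\mathbb R^n\setminus\Omega$. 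Two observations. First, replacing $v\in\mathcal S$ by the function $\overline v$ equal to $v$ on $\Omega$ and to $g$ off $\Omega$ only increases it and keeps it in $\mathcal S$, because for $x\in\Omega$,
\[
(-\Delta)^s\overline v(x)=(-\Delta)^sv(x)+C_{n,s}\int_{\mathbb R^n\setminus\Omega}\frac{(v-g)(w)}{|x-w|^{n+2s}}\intd w\leqslant0;
\]
so we may assume every member of $\mathcal S$ equals $g$ off $\Omega$. Second, $\mathcal S$ is nonempty — the function equal to $0$ on $\Omega$ and to $g$ off $\Omega$ is a subsolution, its fractional Laplacian on $\Omega$ being $-C_{n,s}\int_{\mathbb R^n\setminus\Omega}g(w)|x-w|^{-n-2s}\intd w<0$ — and, comparing any $v\in\mathcal S$ with the supersolution $g$ through Theorem \ref{thm:silvestre-mp}, $v\leqslant g$ on $\mathbb R^n$. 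Put $h_y:=\sup_{v\in\mathcal S}v$; then $h_y=g$ off $\Omega$, $0\leqslant h_y\leqslant g$, and (bounding $v$ near $y$ by the Poisson extension $P_B*g$ of $g|_{B^c}$ in a small ball $B\ni y$, which is bounded) $h_y$ is locally bounded in $\Omega$.

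Next I would run the usual Perron argument for interior regularity. The family $\mathcal S$ is closed under finite maxima by Theorem \ref{thm:maxcons}, and under the Poisson modification $v\mapsto v^B$ in a ball $\overline B\subset\Omega$ (with $v^B=v$ off $B$ and $v^B$ the Poisson extension of $v$ inside $B$, well defined for $v\in\mathcal L_{2s}$ by the rescaled Lemma \ref{exi1} and $s$-harmonic in $B$): one has $v^B\geqslant v$ by Theorem \ref{thm:silvestre-mp} (since $(-\Delta)^s(v^B-v)\geqslant0$ in $B$ and $v^B-v=0$ off $B$), and for $x\in\Omega\setminus\overline B$,
\[
(-\Delta)^sv^B(x)=(-\Delta)^sv(x)+C_{n,s}\int_{B}\frac{(v-v^B)(w)}{|x-w|^{n+2s}}\intd w\leqslant0,
\]
so $v^B\in\mathcal S$. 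Combined with the standard fact that $\sup\mathcal S$ is, up to upper semicontinuous regularization, again a subsolution and hence lies in $\mathcal S$ (so equals $h_y$), these closure properties give $h_y=(h_y)^B$ in every ball $\overline B\subset\Omega$, whence $(-\Delta)^sh_y=0$ in $\mathcal D'(\Omega)$, and interior elliptic regularity for the fractional Laplacian yields $h_y\in C^\infty(\Omega)$; in particular $h_y$ is not singular at $y$, subsolutions in $\Omega$ not feeling the mass $\delta_y$.

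The decisive point is continuity of $h_y$ up to $\partial\Omega$, where the uniform exterior ball condition enters through a barrier. The upper estimate is free: $h_y\leqslant g$ and $g$ is continuous at $X\in\partial\Omega$, so $\limsup_{\Omega\ni x\to X}h_y(x)\leqslant g(X)=h_y(X)$. For the lower estimate, first construct (by applying the Kelvin transform $x\mapsto x/|x|^2$ to Lemma \ref{exi1} with exterior datum $|x|^{2s-n}$) the function $\Psi$ on $\mathbb R^n\setminus\overline{B_1}$ that is $s$-harmonic there, equals $1$ on $\overline{B_1}$, satisfies $0\leqslant\Psi\leqslant1$, belongs to $C(\mathbb R^n)$, and tends to $0$ at infinity; write $\Psi_{\rho,z}(x)=\Psi((x-z)/\rho)$. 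Fix $X\in\partial\Omega$ and $\delta>0$. Shrinking the uniform exterior ball at $X$ to a small ball $B_\rho(z_\rho)\subset\mathbb R^n\setminus\Omega$ with $X\in\partial B_\rho(z_\rho)$, and $\rho$ small enough that $y\notin\overline{B_{2\rho}(X)}$ and $g>g(X)-\delta$ on $\overline{B_{2\rho}(X)}$, set $w_\delta:=(g(X)-\delta)\Psi_{\rho,z_\rho}$. Then $w_\delta$ is $s$-harmonic in $\Omega\subset\mathbb R^n\setminus\overline{B_\rho(z_\rho)}$; on $\overline{B_\rho(z_\rho)}\subset\overline{B_{2\rho}(X)}$ it equals $g(X)-\delta\leqslant g$; and on $\mathbb R^n\setminus\overline{B_\rho(z_\rho)}$ the function $g-w_\delta$ is $\geqslant0$ on $\overline{B_\rho(z_\rho)}$, has $(-\Delta)^s(g-w_\delta)=\delta_y\geqslant0$ there (as $y$ lies in this region), and tends to $0$ at infinity, whence $g-w_\delta\geqslant0$ on $\mathbb R^n$ by the maximum principle on the exhausting bounded domains $(\mathbb R^n\setminus\overline{B_\rho(z_\rho)})\cap B_N$, $N\to\infty$. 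Thus $w_\delta\leqslant g$ off $\Omega$, so the function equal to $w_\delta$ on $\Omega$ and to $g$ off $\Omega$ lies in $\mathcal S$, and therefore $h_y\geqslant w_\delta$ on $\Omega$, giving
\[
\liminf_{\Omega\ni x\to X}h_y(x)\geqslant\liminf_{\Omega\ni x\to X}(g(X)-\delta)\Psi_{\rho,z_\rho}(x)=g(X)-\delta .
\]
Letting $\delta\to0$ yields $\liminf_{\Omega\ni x\to X}h_y(x)\geqslant g(X)$.

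Combining the two estimates, $h_y(x)\to g(X)=g_y(X)=h_y(X)$ as $\Omega\ni x\to X$, for every $X\in\partial\Omega$; with $h_y\in C^\infty(\Omega)$ and $h_y=g_y\in C(\mathbb R^n\setminus\Omega)$ this gives $h_y\in C(\mathbb R^n)$, and $h_y$ solves \eqref{pbg}. I expect the main obstacle to be the lower barrier: one must pass boundary regularity from the explicit ball–complement (on which $\Psi$ is as concrete as the kernels in \eqref{P}–\eqref{G}) to a general $\Omega$ using only the exterior ball, and one must ensure all comparisons — and the Perron bookkeeping generally — remain valid for the semicontinuous, merely distributional sub/supersolutions involved; the interior step is classical but still requires the indicated nonlocal care, the Poisson modification altering a subsolution everywhere on $\mathbb R^n$.
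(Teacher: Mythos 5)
Your plan and the paper's both run Perron's method but diverge on interior regularity, which is where your sketch has a real gap. The paper works with a class $\mathcal P$ of \emph{continuous} nonnegative subsolutions, uses separability (Remark~\ref{rmk3}) to realize the supremum $S$ as an increasing limit of members of $\mathcal P$, concludes $(-\Delta)^s S\leqslant 0$ (Corollary~\ref{cor-ext1}), identifies $S$ with its usc version by the mollification/contradiction argument of Lemma~\ref{lem:cont}, and finally kills the defect measure $\mu$ in $(-\Delta)^s S=-\mu$. You instead take the textbook Poisson-modification route with a class $\mathcal S$ of \emph{usc} subsolutions and deduce $h_y=(h_y)^B$. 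The architecture is fine, but the step you call "the standard fact that $\sup\mathcal S$ is, up to usc regularization, again a subsolution and hence lies in $\mathcal S$" is precisely a Brelot--Cartan theorem for $s$-subharmonic functions; that is nontrivial in the nonlocal setting and is neither proved nor cited in this paper -- indeed the separability/monotone-limit device plus Lemma~\ref{lem:cont} exist exactly to avoid it. A second, repairable, issue: for merely usc $v$, $v^B-v$ need not be lsc on $\overline B$ (you have no lower control on $\liminf_{B\ni x\to\xi}v^B(x)$ at $\partial B$), so Theorem~\ref{thm:silvestre-mp} does not directly yield $v^B\geqslant v$; one should approximate $v$ from above by continuous $v_k\searrow v$ and pass to the limit. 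Your boundary barrier is a genuine and arguably cleaner variant of the paper's: you Kelvin-transform the constant datum on the exterior ball to obtain $\Psi$ and compare $(g(X)-\delta)\Psi_{\rho,z_\rho}$ against $g$, whereas Lemma~\ref{lem:barrier} Kelvin-transforms $g_y$ itself. So: a legitimate alternative route overall, but the interior step requires either a nonlocal Brelot--Cartan theorem or the full three-point Poisson-modification/Harnack argument to be supplied, and the usc Poisson comparison needs the approximation spelled out.
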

Mollifier is a basic tool in our analysis. In the following, we denote by $J_\epsilon [u]$ the mollification of $u$:
\begin{equation}
J_\epsilon [u](x)=(j_\epsilon\ast u)(x)=\int_{B_{\epsilon}(x)} j_\epsilon(x-y)u(y)\intd y,
\end{equation}
where $j\in C_0^{\infty}(B_1)$ is a positive smooth radially symmetric function supported in $B_1$ satisfying $\int_{\mathbb{R}^n}j(x)\ dx=1$. Moreover, $j_\epsilon(x)=\frac{1}{\epsilon^n}j(\frac{x}{\epsilon})$.

\begin{definition}
\label{def:GO}
With $h_y$ defined, one can define the Green function corresponding to the domain $\Omega$ as:
\begin{equation}
  G_\Omega(x,y)=\Phi(x-y)-h_y(x).
\end{equation}
\end{definition}
\begin{remark}
 $G_\Omega(x,y)=0$, if either one of $x,y$ is not in $\Omega$.
\end{remark}

In order to prove Theorem~\ref{thm:ex-1}, we need the following definitions and lemmas to apply the Perron's method.

\begin{definition}
We define the Perron sub-solution class corresponding to \eqref{pbg} as:
  \begin{equation}
    \mathcal{P}=\left\{p\in C(\mathbb{R}^n)\mid
      \left\{\begin{array}{lr}
      (-\Delta)^sp\leqslant0\quad&\text{in}\ \mathcal{D}^\prime(\Omega),\\
      \displaystyle p\leqslant g_y\quad&\text{on}\ \mathbb{R}^n\backslash\Omega,\\
      p\geqslant0\quad&\text{in}\ \mathbb{R}^n.\\
      \end{array}\right. \right\}.
  \end{equation}
\end{definition}
\begin{remark}
Set $\mathcal{P}$ is nonempty.

  Indeed,  let $p_0:\mathbb{R}^n\rightarrow\mathbb{R}$ be defined as $p_0(x)\equiv 0$. Then, one easily verifies $p_0\in\mathcal{P}$.
\end{remark}
\begin{remark}\label{rmk2}
By maximum principle~\ref{thm:silvestre-mp}, each $p\in\mathcal{P}$ satisfies, for all $x\in\mathbb{R}^n$
\begin{equation}
0\leqslant p(x)\leqslant g_y(x)\quad
  \text{and}\quad 0\leqslant p(x)\leqslant \max_{\xi\in\mathbb{R}^n\backslash\Omega}g_y(\xi).
\end{equation}
\end{remark}
\begin{remark}
\label{rmk3}
  The Perron sub-solution class $\mathcal{P}$ is separable, with respect to the metric\begin{equation}
    \rho(p_1,p_2)=\max_{x\in\mathbb{R}^n}|p_1(x)-p_2(x)|=\|p_1-p_2\|_{C(\mathbb{R}^n)}.
  \end{equation}
  Indeed, $\mathcal{P}\subset C_0({\mathbb{R}^n})$ (the Banach space consists of continuous functions on $\mathbb{R}^n$ which vanishes at infinity with norm $\|\cdot\|_{C(\mathbb{R}^n)}$) and $C_0({\mathbb{R}^n})$ is separable.
\end{remark}
\begin{definition}
The Perron solution $S$ of \eqref{pbg} is defined to be:
  \begin{equation}
S(x)=\sup_{p\in\mathcal{P}}p(x).
  \end{equation}
\end{definition}
\begin{remark}
Seeing Remark~\ref{rmk2}, we have the following estimate on $S$:
\begin{equation}
0\leqslant S(x)\leqslant g_y(x)\quad
  \text{and}\quad 0\leqslant S(x)\leqslant \max_{\xi\in\mathbb{R}^n\backslash\Omega}g_y(\xi)\quad\text{for\ all}\ x\in\mathbb{R}^n.
\end{equation}
\end{remark}
\begin{lemma}
\label{lem:ex-1}
$S$ is lower semi-continuous.
\end{lemma}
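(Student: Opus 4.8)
The plan is to exploit the fact that $S$ is a supremum of a family of continuous functions, which automatically gives lower semi-continuity — but one must be slightly careful, since $\mathcal{P}$ is an uncountable family and the pointwise supremum of uncountably many continuous functions is only guaranteed to be lower semi-continuous if we can realize $S$ as a supremum over a \emph{countable} subfamily (or simply invoke the general fact that the sup of \emph{any} family of lower semi-continuous functions is lower semi-continuous). Here is how I would organize it.

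First, I would recall the elementary fact that if $\{p_\alpha\}_{\alpha\in A}$ is an arbitrary collection of continuous (hence lower semi-continuous) functions on $\mathbb{R}^n$, then $S=\sup_\alpha p_\alpha$ is lower semi-continuous, because for each $t\in\mathbb{R}$ the superlevel set $\{S>t\}=\bigcup_{\alpha}\{p_\alpha>t\}$ is a union of open sets, hence open. Strictly speaking this already finishes the proof, since every $p\in\mathcal{P}$ lies in $C(\mathbb{R}^n)$. If one prefers a sequential argument (which also serves as a warm-up for the harder continuity statement that presumably follows in the paper), I would instead use the separability noted in Remark~\ref{rmk3}: let $\{p_k\}_{k\in\mathbb{N}}$ be a countable dense subset of $\mathcal{P}$ with respect to the sup-metric $\rho$, and set $\widetilde{S}(x)=\sup_{k}p_k(x)$. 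Density in $\rho$ gives $\widetilde S = S$ pointwise (indeed uniformly in $x$: for any $p\in\mathcal P$ and $\delta>0$ pick $p_k$ with $\rho(p,p_k)<\delta$, so $p(x)\le p_k(x)+\delta\le \widetilde S(x)+\delta$), and a countable supremum of continuous functions is lower semi-continuous by the same superlevel-set argument, or by the standard $\liminf$ estimate: for $x_j\to x$, $\liminf_j \widetilde S(x_j)\ge \liminf_j p_k(x_j)=p_k(x)$ for every fixed $k$, and taking the sup over $k$ yields $\liminf_j \widetilde S(x_j)\ge \widetilde S(x)=S(x)$.

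I do not anticipate a genuine obstacle here; the only point requiring a moment of care is making sure the supremum is not $+\infty$ somewhere (so that ``lower semi-continuous'' is meaningful as a statement about an $\mathbb{R}$-valued function), and this is exactly the content of the uniform bound $0\le S(x)\le \max_{\xi\in\mathbb{R}^n\setminus\Omega} g_y(\xi)$ recorded in the preceding remark, together with $0\le S(x)\le g_y(x)$ which controls $S$ near $y$ as well (note $S(y)$ could be large, but is finite since $g_y$ is finite away from $x=y$ and $S\le g_y$; and in fact $y\in\Omega$ while the relevant bound $S\le \max_{\mathbb{R}^n\setminus\Omega} g_y$ holds everywhere). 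So $S$ is a well-defined real-valued function, expressible as a (countable) supremum of continuous functions, and is therefore lower semi-continuous. The reason the paper isolates this as a separate lemma is presumably that lower semi-continuity is one half of what is needed to later show $S$ actually solves \eqref{pbg} (via Theorem~\ref{thm:silvestre-mp} and the approximation/mollification machinery), with upper semi-continuity — the harder half — handled separately using the exterior ball condition to construct barriers.
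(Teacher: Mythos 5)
Your proposal is correct and is essentially the same argument as the paper's: both rest on the elementary fact that a supremum of continuous functions is lower semi-continuous. The paper writes this out as the direct $\epsilon$-$\delta$ version (pick $p\in\mathcal P$ with $p(x)>S(x)-\epsilon$ and use continuity of that single $p$), whereas you phrase it via superlevel sets and add an optional countable-dense-subfamily variant; these are just different packagings of the same observation, and your extra remarks on finiteness of $S$ are consistent with Remark~\ref{rmk2}.
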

\begin{proof}[Proof of Lemma \ref{lem:ex-1}]
Indeed, for any $x\in\mathbb{R}^n$, $\epsilon>0$, there exists $p\in\mathcal{P}$ such that
    \begin{equation}
    p(x)>S(x)-\epsilon.
    \end{equation}
    However, $p$ is continuous, then there exists $\delta>0$ such that for any $\xi\in B_\delta(x)$,
    \begin{equation}
      p(\xi)>p(x)-\epsilon.
    \end{equation}
    Therefore, for any $\xi\in B_\delta(x)$
    \begin{equation}
      S(\xi)\geqslant p(\xi)>p(x)-\epsilon>S(x)-2\epsilon.
    \end{equation}
    I.e. $S$ is lower semi-continuous.
\end{proof}
\begin{lemma}
\label{lem:ex-4}
  Let $p_1,p_2\in\mathcal{P}$, $p_3$ defined as follow:
  \begin{equation}
    p_3(x)=\max\{p_1(x),p_2(x)\}\quad\text{for\ any}\quad x\in\mathbb{R}^n.
  \end{equation} Then $p_3\in\mathcal{P}$.
\end{lemma}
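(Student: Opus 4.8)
The plan is to verify, one at a time, the three defining properties of the class $\mathcal{P}$ for the function $p_3=\max\{p_1,p_2\}$; only the distributional inequality requires a nontrivial input. First, continuity: $p_3$ is continuous on $\mathbb{R}^n$ because it is the pointwise maximum of the continuous functions $p_1,p_2$. Next, the exterior datum: for every $x\in\mathbb{R}^n\backslash\Omega$ we have $p_1(x)\leqslant g_y(x)$ and $p_2(x)\leqslant g_y(x)$, hence $p_3(x)=\max\{p_1(x),p_2(x)\}\leqslant g_y(x)$. Similarly, $p_3\geqslant 0$ on $\mathbb{R}^n$ since each $p_i\geqslant 0$ there.

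The remaining point is $(-\Delta)^s p_3\leqslant 0$ in $\mathcal{D}'(\Omega)$, and here I would simply invoke Theorem~\ref{thm:maxcons} with $\vec b\equiv 0$, $c\equiv 0$, and $f\equiv g\equiv 0$. To apply that theorem I first need $p_1,p_2\in\mathcal{L}_{2s}$: by Remark~\ref{rmk2} each $p_i$ is bounded on $\mathbb{R}^n$ by $\max_{\xi\in\mathbb{R}^n\backslash\Omega}g_y(\xi)<\infty$, and any bounded measurable function lies in $\mathcal{L}_{2s}$ because $\int_{\mathbb{R}^n}(1+|y|^{n+2s})^{-1}\intd y<\infty$. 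With the hypotheses in place, Theorem~\ref{thm:maxcons} gives
\[
(-\Delta)^s p_3 \;\leqslant\; 0\cdot\chi_{p_1>p_2} + 0\cdot\chi_{p_1<p_2} + \max\{0,0\}\,\chi_{p_1=p_2} \;=\; 0 \qquad\text{in}\quad \mathcal{D}'(\Omega),
\]
which is exactly the subsolution property. Combining this with the three pointwise checks above yields $p_3\in\mathcal{P}$.

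The only real subtlety — and the reason this lemma is not the triviality it would be for a local operator — is that for the nonlocal operator $(-\Delta)^s$ the maximum of two subsolutions is \emph{not} a subsolution for elementary pointwise reasons; the distributional/viscosity comparison argument underlying Theorem~\ref{thm:maxcons} is what does the genuine work. Once that result is granted, no further estimates are needed, and the proof is complete.
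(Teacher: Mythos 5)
Your proof is correct and follows essentially the same route as the paper: you check the three defining properties of $\mathcal{P}$ directly and invoke Theorem~\ref{thm:maxcons} with $\vec b\equiv 0$, $c\equiv 0$, $f\equiv g\equiv 0$ for the distributional subsolution inequality. Your explicit justification that $p_1,p_2\in\mathcal{L}_{2s}$ via the boundedness from Remark~\ref{rmk2} is a small but welcome addition that the paper leaves implicit.
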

\begin{proof}
 It is easy to see: $p_3\in C(\mathbb{R}^n)\cap \mathcal{L}_{2s}$ from the fact that $p_1,p_2\in C(\mathbb{R}^n)\cap \mathcal{L}_{2s}$.

 It is also clear that $p_3\leqslant g_y$ on $\mathbb{R}^n\backslash\Omega$.

 Choose $\vec b, c, f, g$ to be zero in Theorem \ref{thm:maxcons}, one derives
\begin{equation}
  (-\Delta)^s p_3\leqslant 0\quad\text{in}\quad\mathcal{D}^\prime(\Omega).
\end{equation}

Therefore, $p_3\in\mathcal{P}$.
\end{proof}

\begin{lemma}
   Let $\Omega$ be a bounded domain in $\mathbb{R}^n$ satisfying the exterior ball condition, $S$ be the Perron solution corresponding to \eqref{pbg}. Then, there exists a monotonically increasing sequence $\{p_i\}_{i=1}^\infty\subset \mathcal{P}$  such that
  \begin{equation}
   S(x)=\lim_{i\rightarrow\infty}p_i(x).
  \end{equation}
\end{lemma}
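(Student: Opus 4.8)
The plan is to extract the increasing sequence from the separability of $\mathcal{P}$ established in Remark~\ref{rmk3}, combined with the lattice property of Lemma~\ref{lem:ex-4}. First I would fix a countable dense subset $\{q_k\}_{k=1}^\infty\subset\mathcal{P}$ with respect to the metric $\rho$. For each $k$, by density and the definition of $S$ as a pointwise supremum, the family $\{q_1,\dots,q_k\}$ need not be comparable, so I would apply Lemma~\ref{lem:ex-4} iteratively to form $p_k:=\max\{q_1,q_2,\dots,q_k\}\in\mathcal{P}$; since each $p_{k+1}=\max\{p_k,q_{k+1}\}\geqslant p_k$, the sequence $\{p_k\}$ is monotonically increasing and lies in $\mathcal{P}$.

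It remains to verify $\lim_{k\to\infty}p_k(x)=S(x)$ for every $x\in\mathbb{R}^n$. Clearly $p_k(x)\leqslant S(x)$ for all $k$ since $p_k\in\mathcal{P}$, so $\lim_k p_k(x)\leqslant S(x)$. For the reverse inequality, fix $x$ and $\epsilon>0$; by definition of the supremum there is $p\in\mathcal{P}$ with $p(x)>S(x)-\epsilon/2$. By density of $\{q_k\}$ in $(\mathcal{P},\rho)$, there is $k$ with $\rho(p,q_k)<\epsilon/2$, hence $q_k(x)>p(x)-\epsilon/2>S(x)-\epsilon$. Since $p_k\geqslant q_k$ pointwise, $p_k(x)>S(x)-\epsilon$, and the monotone limit satisfies $\lim_j p_j(x)\geqslant p_k(x)>S(x)-\epsilon$. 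Letting $\epsilon\to0$ gives $\lim_j p_j(x)\geqslant S(x)$, completing the argument.

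The only delicate point is making sure the ingredients are genuinely available: Lemma~\ref{lem:ex-4} must be applied finitely many times at each stage (it is stated only for two functions, but iterating is immediate), and one must check that the max of finitely many elements of $\mathcal{P}$ is again in $C(\mathbb{R}^n)\cap\mathcal{L}_{2s}$ with the correct boundary bound and subsolution property — all of which Lemma~\ref{lem:ex-4} already packages. I do not anticipate a real obstacle here; the exterior ball hypothesis in the statement is not actually used for this particular lemma (it will be needed later for the boundary continuity of $S$), so the proof is essentially a formal consequence of separability plus the closure of $\mathcal{P}$ under binary maxima.
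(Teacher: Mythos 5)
Your proof is correct and follows essentially the same route as the paper: extract a countable dense subset from the separability noted in Remark~\ref{rmk3}, take running maxima via Lemma~\ref{lem:ex-4}, and observe that density in the sup metric forces the pointwise supremum of the countable family to agree with $S$. The only difference is that you spell out the last density step (which the paper states as an immediate consequence), and you correctly note the exterior ball hypothesis is not used here.
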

\begin{proof}
 From Remark \ref{rmk3}, there exists a countable subset $\{p^*_j\}_{j=1}^\infty$ of $\mathcal{P}$ that is dense in $\mathcal{P}$. As a consequence, we can write $S(x)$ as:
 \begin{equation}
   S(x)=\sup_{j\in\mathbb{N}_+} p^*_j(x).
 \end{equation}

Define
\begin{equation}
  p_i(x)=\max_{1\leqslant j\leqslant i}p^*_j(x).
\end{equation}
Clearly, sequence $\{p_i\}_{i=1}^\infty$ is monotonically increasing, and
 \begin{equation}
   S(x)=\lim_{i\rightarrow\infty}p_i(x).
 \end{equation}
 Moreover, by Lemma \ref{lem:ex-4}, one derives $p_i\in\mathcal{P}$.
\end{proof}
\begin{corollary}
\label{cor-ext1}
  $S(x)$ satisfies
  \begin{equation}
    (-\Delta)^s S\leqslant0\quad\text{in}\quad \mathcal{D}^\prime(\Omega).
  \end{equation}
\end{corollary}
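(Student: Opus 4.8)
The plan is to pass to the limit in the distributional inequality satisfied by the monotone sequence $\{p_i\}$ produced by the previous lemma. Fix a test function $\varphi\in C_0^\infty(\Omega)$ with $\varphi\geqslant 0$. By definition, $(-\Delta)^s p_i\leqslant 0$ in $\mathcal{D}'(\Omega)$ means
\[
\int_{\mathbb{R}^n} p_i(x)\,(-\Delta)^s\varphi(x)\intd x\leqslant 0\qquad\text{for every }i.
\]
It therefore suffices to show that the left-hand side converges to $\int_{\mathbb{R}^n} S(x)(-\Delta)^s\varphi(x)\intd x$ as $i\to\infty$; this last integral is then $\leqslant 0$ for all admissible $\varphi$, which is precisely the claim.

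First I would record the two routine facts that legitimize the passage to the limit. Since $\varphi\in C_0^\infty(\Omega)$, its fractional Laplacian obeys a pointwise decay estimate $|(-\Delta)^s\varphi(x)|\leqslant C(1+|x|)^{-n-2s}$: for $x$ away from $\operatorname{supp}\varphi$ the principal value reduces to $-C_{n,s}\int_{\mathbb{R}^n}\varphi(y)|x-y|^{-n-2s}\intd y=O(|x|^{-n-2s})$, and near $\operatorname{supp}\varphi$ it is bounded because $\varphi$ is smooth. Moreover, by Remark~\ref{rmk2} every $p_i$ satisfies $0\leqslant p_i\leqslant g_y$, and $g_y(x)=a(n,s)|x-y|^{2s-n}$ belongs to $\mathcal{L}_{2s}$ (locally integrable near $y$ since $2s-n>-n$, and $g_y(x)(1+|x|)^{-n-2s}\sim|x|^{-2n}$ is integrable at infinity). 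Hence $g_y\,|(-\Delta)^s\varphi|\in L^1(\mathbb{R}^n)$ is an $i$-independent integrable majorant of $|p_i\,(-\Delta)^s\varphi|$.

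With these in hand I would apply the dominated convergence theorem: by the previous lemma $p_i(x)\to S(x)$ pointwise (indeed monotonically increasing), and the integrands are dominated by the fixed $L^1$ function $g_y\,|(-\Delta)^s\varphi|$, so
\[
\int_{\mathbb{R}^n} p_i(x)(-\Delta)^s\varphi(x)\intd x\longrightarrow\int_{\mathbb{R}^n} S(x)(-\Delta)^s\varphi(x)\intd x .
\]
Since each term on the left is $\leqslant 0$, so is the limit. Finally, $0\leqslant S\leqslant g_y$ also shows $S\in\mathcal{L}_{2s}$, so $(-\Delta)^s S$ is a well-defined distribution on $\Omega$, and we conclude $(-\Delta)^s S\leqslant 0$ in $\mathcal{D}'(\Omega)$.

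I expect the only genuinely delicate point to be the non-local feature of the problem: the ``test object'' $(-\Delta)^s\varphi$ is globally supported and sign-changing, so one cannot merely invoke monotone convergence for the nonnegative sequence $\{p_i\}$ — the uniform integrable domination by $g_y$ furnished by Remark~\ref{rmk2} is the real ingredient. The exterior ball condition and boundary regularity of $\Omega$ are not needed here; they will enter later when establishing continuity of $S$ up to $\partial\Omega$. Everything else in the argument is standard.
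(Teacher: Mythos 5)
Your proof is correct and follows essentially the same route as the paper: pass to the limit along the monotone sequence $\{p_i\}$ using dominated convergence, with the decay of $(-\Delta)^s\varphi$ and the uniform bound on $p_i$ from Remark~\ref{rmk2} providing the $L^1$ majorant. The only cosmetic difference is that you dominate by $g_y\,|(-\Delta)^s\varphi|$, whereas the paper uses the cruder constant bound $p_i\leqslant\max_{\xi\in\mathbb{R}^n\setminus\Omega}g_y(\xi)$; both majorants are integrable and the argument is identical in substance.
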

\begin{proof}
For any test function $\varphi\in\mathcal{D}(\Omega)$, there exist constants $C_1,C_2,M>0$ such that
\begin{equation}
\begin{aligned}
  |(-\Delta)^s \varphi(x)|\leqslant C_1&\quad\text{for} \quad x\in\mathbb{R}^n\\
  |(-\Delta)^s \varphi(x)|\leqslant \frac{C_2}{|x|^{n+2s}}&\quad\text{for} \quad |x|\geqslant M.
\end{aligned}
\end{equation}
Then
\begin{equation}
  p_i(x)|(-\Delta)^s \varphi(x)|\leqslant\underbrace{|(-\Delta)^s \varphi(x)|\max_{\xi\in\mathbb{R}^n\backslash\Omega}g_y(\xi)}_{:=\psi(x)}\leqslant\left\{
  \begin{aligned}
    &C_3&\quad\text{for}\quad |x|< M,\\
   &\frac{C_4}{|x|^{n+2s}}&\quad\text{for} \quad |x|\geqslant M.
  \end{aligned}\right.
\end{equation}
Therefore, $\psi\in L^1(\mathbb{R}^n)$ and by Lebesgue dominated convergence theorem, we have
\begin{equation}
  \int_{\mathbb{R}^n}S(x)(-\Delta)^s\varphi(x)\intd x=\lim_{i\rightarrow\infty}\int_{\mathbb{R}^n}p_i(x)(-\Delta)^s\varphi(x)\intd x\leqslant0.
\end{equation}
I.e.,
 \begin{equation}
    (-\Delta)^s S\leqslant0\quad\text{in}\quad \mathcal{D}^\prime(\Omega).
  \end{equation}
\end{proof}
\begin{remark}
\label{rmk5}
  As a consequence of Corollary~\ref{cor-ext1} and Theorem~\ref{thm:silvestra-semi}, $S$ has a version $\tilde{S}$ that is upper semi-continuous in $\Omega$. Indeed, for $x\in\Omega$,
  \begin{equation}
    \tilde{S}(x)=\lim_{\lambda\rightarrow 0^+}S\ast\gamma_\lambda(x).
  \end{equation}
\end{remark}
\begin{lemma}[Barrier function]
\label{lem:barrier}
Let $X\in\partial \Omega$, $B_r(c)$ be an exterior ball of $\Omega$ at $X$. Then there exists function $p\in\mathcal{P}$ such that
\begin{equation}
\label{eq:ex-1}
  p(x)=g_y(x)\quad\text{for}\quad x\in \overline{B_r(c)}.
\end{equation}
Moreover, one estimates:
\begin{equation}
\label{est:b}
  |p(x)-p(X)|\leqslant C|x-X|^s.
\end{equation}
Here $C$ depends only on $s,n,r$ and $\operatorname{dist}(y,\partial\Omega)$.
\end{lemma}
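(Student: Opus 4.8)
The plan is to produce the barrier $p$ as the solution of an exterior Dirichlet problem for the ball $B_r(c)$, and then to extract all the listed properties from comparison arguments and known boundary regularity.

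\emph{Setup.} Write $B=B_r(c)$ and $D=\mathbb{R}^n\setminus\overline{B}$. Since $\Omega$ is open and $B\cap\Omega=\emptyset$, one also has $\overline{B}\cap\Omega=\emptyset$ (a point of $\partial B\cap\Omega$ would force a full ball inside $\Omega$ meeting $B$), so $\Omega\subset D$ and $y\in D$; in particular $y\notin\overline{B}$, hence $g_y$ is smooth on a neighbourhood of $\overline{B}$, and $\dist(y,\overline{B})\geqslant\dist(y,\partial\Omega)$, so that $\sup_{\overline{B}}g_y\leqslant a(n,s)\,\dist(y,\partial\Omega)^{2s-n}$ and all derivatives of $g_y$ on $\overline{B}$ are bounded in terms of $n,s,r,\dist(y,\partial\Omega)$ only.

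\emph{Construction of $p$.} Let $p$ be the bounded solution of $(-\Delta)^s p=0$ in $D$ with $p=g_y$ on $\mathbb{R}^n\setminus D=\overline{B}$. Existence and continuity of such a $p$ follow from the ball case (Lemma~\ref{exi1}) by translation, dilation and the Kelvin transform, which maps $D$ onto a (punctured) ball, preserves $s$-harmonicity, and lets us prescribe the value $0$ at the image of infinity; equivalently $p$ is given by the explicit exterior Poisson kernel obtained from \eqref{P} via the inversion $x\mapsto c+r^2(x-c)/|x-c|^2$. Thus $p\in C(\mathbb{R}^n)$, $p$ is $C^\infty$ and $s$-harmonic in $D$, $0\leqslant p\leqslant\sup_{\overline{B}}g_y$ (so $p\in\mathcal{L}_{2s}$), $p(x)\to 0$ as $|x|\to\infty$, and $p=g_y$ on $\overline{B}$, which is exactly \eqref{eq:ex-1}.

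\emph{Verification that $p\in\mathcal{P}$.} There are three conditions. First, $\Omega\subset D$ and $(-\Delta)^s p=0$ pointwise in $D$, so by restriction $(-\Delta)^s p\leqslant 0$ in $\mathcal{D}'(\Omega)$. Second, $p\geqslant 0$ in $\mathbb{R}^n$: apply the maximum principle to $p$ on $D$, using $p=g_y\geqslant 0$ on $\mathbb{R}^n\setminus D$ together with the fact that $p$ is bounded and decays at infinity (the maximum principle of Theorem~\ref{thm:silvestre-mp} extends to the unbounded $D$ by exhausting $D\cap B_R$ and using interior smoothness of the $s$-harmonic $p$ to preclude a negative interior minimum, as in Theorem~\ref{thm:silvestra-semi}). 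Third, $p\leqslant g_y$ on $\mathbb{R}^n$, hence on $\mathbb{R}^n\setminus\Omega$: set $w=g_y-p$; then $w\in\mathcal{L}_{2s}$ is lower semicontinuous, $w=0$ on $\overline{B}$, $w\to 0$ at infinity, and $(-\Delta)^s w=\delta_y\geqslant 0$ in $\mathcal{D}'(D)$ since $y\in D$, so the same maximum principle gives $w\geqslant 0$. Therefore $p\in\mathcal{P}$.

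\emph{Hölder estimate and the main obstacle.} Finally $p$ is $s$-harmonic in $D$, whose boundary $\partial D=\partial B_r(c)$ is smooth with geometry quantified by $r$, and the exterior datum $g_y$ is smooth on $\overline{B}$ with norms controlled by $\dist(y,\partial\Omega)$; the boundary regularity theory for the fractional Laplacian \cite{MR3168912}, in its localized form near the boundary point $X$ (applied directly to $D$ or after the Kelvin transform above), yields $p\in C^s$ up to $\partial D$ near $X$ with $\|p\|_{C^s}\leqslant C(n,s,r,\dist(y,\partial\Omega))$, which is precisely \eqref{est:b}. I expect the two delicate points to be: making the exterior Dirichlet problem rigorous — the inversion/Kelvin bookkeeping including the removable behaviour at the image of infinity, and the maximum principle on the unbounded domain $D$ together with the decay of $p$ — and extracting the $C^s$ bound at $X$ with a constant depending only on the stated data, i.e.\ invoking the Ros-Oton–Serra boundary Hölder estimate with a uniform constant, which is the step that genuinely uses the exterior-ball geometry.
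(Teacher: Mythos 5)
Your proof takes essentially the same approach as the paper: Kelvin transform to reduce the exterior problem on $D=\mathbb{R}^n\setminus\overline{B_r(c)}$ to the interior ball case handled by Lemma~\ref{exi1}, comparison arguments via the maximum principle, and boundary $C^s$ regularity to get \eqref{est:b}. The chief organizational difference is where the comparisons are performed. The paper first Kelvin-transforms the \emph{data} $g_y$ to $\hat g_y$, observes explicitly that $\hat g_y$ is a scaled fundamental solution based at $\eta = c + r^2(y-c)/|y-c|^2 \in B_r(c)$, solves the interior Dirichlet problem on the ball with this data to get $\hat p$, proves $0\leqslant\hat p\leqslant \hat g_y$ by Theorem~\ref{thm:silvestre-mp} on the bounded ball, and only then transforms back; this avoids entirely the maximum principle on the unbounded domain $D$ that you appeal to (and which, as you note, requires an additional exhaustion/decay argument). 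For the Hölder estimate the paper likewise works on the transformed problem on the ball and simply records the two properties of $\hat g_y$ that drive the boundary regularity there — a uniform bound on $\int_{\mathbb{R}^n\setminus B_r(c)}\hat g_y(x)|x-c|^{-(n+2s)}\,dx$ and a uniform $C^1$ bound on $\hat g_y$ near $\partial B_r(c)$ — whereas you invoke the Ros-Oton--Serra boundary estimate more abstractly; these amount to the same estimate, though the paper's phrasing makes the dependence of the constant on $n,s,r,\dist(y,\partial\Omega)$ more transparent. Neither version is fully detailed on the Hölder step, and yours is at a comparable level of rigor; the paper's route is just slightly cleaner because it keeps every comparison on a bounded domain.
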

\begin{proof}
Doing the Kelvin transform to $g_y$ along $\partial B_r(c)$, one gets
\begin{equation}
\begin{aligned}
\hat{g}_y(x)&=\frac{r^{n-2s} }{|x-c|^{n-2s}} g_y\left(\frac{r^2(x-c)}{|x-c|^2}+c\right) =a(n,s)\frac{r^{n-2s} }{|x-c|^{n-2s}} \left|\frac{r^2(x-c)}{|x-c|^2}+c-y\right|^{2s-n}\\
&=a(n,s)\frac{|\eta-c|^{n-2s}}{r^{n-2s}}\frac{1}{|x-\eta|^{n-2s}},
\end{aligned}
\end{equation}
where
\begin{equation}
  \eta=r^2\frac{(y-c)}{|y-c|^2}+c\in B_r(c).
\end{equation}

One observes $\hat{g}\in\mathcal{L}_{2s}\cap C(\mathbb{R}^n\backslash{B_r(c)})$. Therefore one can apply Lemma~\ref{exi1} to derives the existence of function $\hat{p}\in C(\mathbb{R}^n)$ such that
\begin{equation}
\left\{\begin{array}{lr}
      (-\Delta)^s\hat{p}=0\quad&\text{in}\ B_r(c),\\
      \displaystyle \hat{p}=\hat{g}_y\quad&\text{on}\
      \mathbb{R}^n\backslash B_r(c).
      \end{array}\right.
\end{equation}
In particular, by maximum principle~\ref{thm:silvestre-mp}, we know $\hat{p}\leqslant\hat{g}_y$ in $\mathbb{R}^n$.

Do the Kelvin transform to $\hat{p}$ along $\partial B_r(c)$, one obtains function $p\in C(\mathbb{R}^n)\cap\mathcal{L}_{2s}$ with:
\begin{equation}
\left\{\begin{array}{lr}
      (-\Delta)^sp=0\quad&\text{in}\ \mathbb{R}^n\backslash \overline{B_r(c)},\\
      \displaystyle p=g_y\quad&\text{on}\ \overline{B_r(c)},\\
      \displaystyle p\leqslant g_y\quad&\text{in}\ \mathbb{R}^n.
      \end{array}\right.
\end{equation}
Note that $\Omega \subset\mathbb{R}^n\backslash \overline{B_r(c)}$, we conclude $p\in\mathcal{P}$.
The estimate \eqref{est:b} comes from the fact that
\begin{equation}
  \int_{\mathbb{R}^n\backslash B_r(c)}\frac{\hat{g}_y(x)}{|x-c|^{n+2s}}\intd x\leqslant C_y\qquad\text{and}\qquad [\hat{g}_y]_{C^1(\mathbb{R}^n\backslash B_r(c))}\leqslant C_y.
\end{equation}
\end{proof}

\begin{lemma}
\label{lem:ex-3}
   Let $\Omega$ be a bounded domain in $\mathbb{R}^n$ satisfying the exterior ball condition, $S$ be the Perron solution corresponding to \eqref{pbg}. Then,   $S$ is continuous on $\mathbb{R}^n\backslash\Omega$.
   Moreover,
   \begin{equation}
     S(x)=g_y(x)
     \quad\text{for\ all}\quad x\in\mathbb{R}^n\backslash\Omega.
   \end{equation}
\end{lemma}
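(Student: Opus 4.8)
The plan is to show that $S$ attains its boundary data continuously from the exterior, by squeezing $S$ between the subsolution $g_y$ itself (which is already a competitor in $\mathcal{P}$ near the boundary, via the barriers) and the universal upper bound $S \leqslant g_y$ coming from Remark~\ref{rmk2}. Since $S \leqslant g_y$ on all of $\mathbb{R}^n$ already, the real content is the lower bound $S(x) \geqslant g_y(x)$ for $x \in \mathbb{R}^n\backslash\Omega$, together with continuity at boundary points.

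First I would observe that for $x$ in the \emph{open} exterior $\mathbb{R}^n\backslash\overline\Omega$ the function $g_y$ is continuous and $g_y\in\mathcal{P}$ is false in general (it need not be subharmonic across $\partial\Omega$), so one cannot simply plug in $g_y$. Instead, fix $x_0 \in \mathbb{R}^n\backslash\Omega$. If $x_0$ lies in the open complement, pick a point $X\in\partial\Omega$ and the exterior ball $B_r(c)$ at $X$ from the uniform exterior ball condition; by enlarging/moving the exterior ball appropriately (or by a direct Kelvin-transform construction analogous to Lemma~\ref{lem:barrier} but centered so that $x_0 \in \overline{B_r(c)}$), one produces $p\in\mathcal{P}$ with $p(x_0) = g_y(x_0)$, hence $S(x_0)\geqslant p(x_0) = g_y(x_0)$. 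Combined with $S(x_0)\leqslant g_y(x_0)$ this gives $S(x_0) = g_y(x_0)$ on all of $\mathbb{R}^n\backslash\Omega$. For continuity on $\mathbb{R}^n\backslash\Omega$: at interior points of the complement this is immediate since $S = g_y$ there and $g_y$ is smooth away from $y$; at a boundary point $X\in\partial\Omega$ we need $S(\xi)\to S(X)=g_y(X)$ as $\xi\to X$. The upper bound $S\leqslant g_y$ gives $\limsup_{\xi\to X} S(\xi)\leqslant g_y(X)$ by continuity of $g_y$. For the lower bound, use the barrier $p$ from Lemma~\ref{lem:barrier} at $X$: it satisfies $p\in\mathcal{P}$, $p(X) = g_y(X)$, and the Hölder estimate \eqref{est:b}, so $S(\xi)\geqslant p(\xi)\geqslant p(X) - C|\xi - X|^s = g_y(X) - C|\xi-X|^s \to g_y(X)$. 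Hence $\lim_{\xi\to X} S(\xi) = g_y(X) = S(X)$, and $S$ is continuous on $\mathbb{R}^n\backslash\Omega$.

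The main obstacle I anticipate is the first half: producing, for an \emph{arbitrary} exterior point $x_0$ (not just a boundary point), a member of $\mathcal{P}$ that equals $g_y$ at $x_0$. The barrier lemma as stated only guarantees $p = g_y$ on a single exterior ball touching $\partial\Omega$, which may not contain $x_0$. The clean fix is: given $x_0\in\mathbb{R}^n\backslash\Omega$, the segment or path from $x_0$ toward $\Omega$ first meets $\partial\Omega$ at some $X$; but more simply, since $x_0\notin\Omega$ and $\Omega$ is open, $\dist(x_0,\Omega)\geqslant 0$ and one can choose an exterior ball (possibly after a Kelvin inversion as in Lemma~\ref{lem:barrier}) whose closure contains $x_0$ while still being disjoint from $\Omega$ — for instance any ball through $x_0$ and tangent externally to $\Omega$. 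Running the Kelvin-transform argument of Lemma~\ref{lem:barrier} with this ball gives the desired $p$. If one wants to avoid this case analysis entirely, an alternative is to note that the upper bound $S\leqslant g_y$ on $\mathbb{R}^n\backslash\Omega$ plus the barrier-based continuity at $\partial\Omega$ already pins down $S$ on $\partial\Omega$; then for $x_0$ in the open exterior one invokes that $S$ restricted to $\mathbb{R}^n\backslash\Omega$ must be $g_y$ because both solve $(-\Delta)^s(\cdot) = 0$ appropriately—but this is circular since $S$ is not yet known to be a solution, so the direct barrier construction is the honest route and is where the work lies.

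Once this lemma is in hand, the remaining steps of Theorem~\ref{thm:ex-1} (showing $\tilde S$ is also $s$-harmonic in $\Omega$, hence $\tilde S = h_y$ solves \eqref{pbg}, and upgrading interior regularity to $C^\infty(\Omega)$ via standard interior estimates for the fractional Laplacian) follow the classical Perron machinery, using Corollary~\ref{cor-ext1} and Remark~\ref{rmk5} for the supersolution side.
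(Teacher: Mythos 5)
Your overall strategy --- squeeze $S$ between a competitor in $\mathcal{P}$ that equals $g_y$ locally and the universal upper bound $S\leqslant g_y$ --- is exactly the paper's, and your handling of boundary points $X\in\partial\Omega$ via the barrier $p_X$ of Lemma~\ref{lem:barrier} together with the H\"older estimate \eqref{est:b} is the paper's argument (the paper only invokes continuity of $p_X$ and $g_y$, but that is the same squeeze). Where you diverge is in the open exterior $\mathbb{R}^n\setminus\overline\Omega$, and there the paper uses a much lighter construction than you propose: fix $x_1$ with $\dist(x_1,\Omega)=4\epsilon>0$, set $\eta=J_\epsilon[\chi_{\Omega^{2\epsilon}}]$ (a smooth cutoff equal to $1$ on a neighborhood of $\overline\Omega$ and to $0$ where $\dist(\cdot,\Omega)\geqslant 3\epsilon$), and put $p_1=g_y(1-\eta)$. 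Then $p_1\equiv 0$ on a neighborhood of $\overline\Omega$, so for $x\in\Omega$ one has $(-\Delta)^s p_1(x)=-C_{n,s}\int p_1(z)|x-z|^{-n-2s}\intd z\leqslant 0$ with no principal value subtlety; also $0\leqslant p_1\leqslant g_y$, so $p_1\in\mathcal{P}$, and $p_1=g_y$ on $B_\epsilon(x_1)$, which forces $S=g_y$ on a whole ball around $x_1$ and gives continuity there for free. Your alternative of rerunning the Kelvin barrier with a ball through $x_0$ can also be made to work, but as written it has a gap: you ask for a ball through $x_0$ that is \emph{tangent} externally to $\Omega$, which need not exist for a non-convex $\Omega$ once $x_0$ is far from $\partial\Omega$, and the uniform exterior radius $r$ is irrelevant here. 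The Kelvin argument of Lemma~\ref{lem:barrier} only uses that the ball is disjoint from $\Omega$, not tangency; so the correct fix is to take \emph{any} ball $B\ni x_0$ with $\overline{B}\cap\Omega=\emptyset$ (e.g.\ $B_{\dist(x_0,\Omega)/2}(x_0)$), which yields $p\in\mathcal{P}$ with $p=g_y$ on $\overline{B}$. With that correction your route is valid, but it costs an extra Kelvin transform and buys nothing over the paper's elementary cutoff, which is the cleaner choice.
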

\begin{proof}
For $x_1\in \mathbb{R}^n\backslash\overline{\Omega}$, say $\dist(x_1,\Omega)=4\epsilon >0$.
Then we do the following construction:

Let $\Omega^{2\epsilon}=\{x\in\mathbb{R}^n|\dist(x,\Omega)<2\epsilon\}$ and
\begin{equation}
\eta(x)=J_\epsilon[\chi_{\Omega^{2\epsilon}}](x)
\end{equation}
Then $\eta\in C^{\infty}(\mathbb{R}^n)$ and
\begin{equation}
  \eta(x)=\left\{
  \begin{aligned}
0&\quad\text{if}\quad \dist(x,\Omega)\geqslant 3\epsilon\\
1&\quad\text{if}\quad \dist(x,\Omega)\leqslant  \epsilon.
  \end{aligned}
  \right.
\end{equation}
Let $p_1(x)=g_y(x)(1-\eta(x))$. One easily verifies $p_1\in\mathcal{P}$. While, $p_1(x)=g_y(x)$ for $x\in B_\epsilon(x_1)$.
Therefore, $S(x)\geqslant g_y(x)$ for $x\in B_\epsilon(x_1)$. On the other hand, $S(x)\leqslant g_y(x)$ for $x\in B_\epsilon(x_1)$.
Hence, $S(x)=g_y(x)$ for $x\in B_\epsilon(x_1)$ and $S$ is continuous at $x_1$.

For $X\in\partial\Omega$, there exists an exterior ball $B_r(c)$ of $\Omega$ at $X$. According to Lemma \ref{lem:barrier}, there exists $p\in\mathcal{P}$ such that
$p(x)=g_y(x)$ for $x\in \overline{B_r(c)}$.
In particular, $p(X)=g_y(X)$.
On the other hand, $S(x)\leqslant g_y(x)$.
Hence, $S(X)=g_y(X)$. Moreover, in a neighborhood of $X$,
\begin{equation}
  p(x)\leqslant S(x)\leqslant g_y(x),
\end{equation}
with both $p$ and $g_y$ continuous near $X$. As a consequence, $S$ is continuous at $X$.

Summing up the above results, $S$ is continuous on $ \mathbb{R}^n\backslash\Omega$.
Moreover,
   \begin{equation}
     S(x)=g_y(x)
     \quad\text{for\ all}\quad x\in\mathbb{R}^n\backslash\Omega.
   \end{equation}
\end{proof}
\begin{lemma}
  \label{lem:cont}
   Let $\Omega$ be a bounded domain in $\mathbb{R}^n$ satisfying the uniform exterior ball condition, $S$ be the Perron solution corresponding to \eqref{pbg}. Then, $S$ is continuous in $\Omega$.
\end{lemma}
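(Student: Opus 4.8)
The plan is to carry out the nonlocal version of the classical Perron argument: I will show that $S$ is $s$-harmonic in every ball compactly contained in $\Omega$, and then interior regularity for the fractional Laplacian yields continuity. Fix $x_0\in\Omega$ and $\delta>0$ with $\overline{B_\delta(x_0)}\subset\Omega$, and set $B=B_\delta(x_0)$. Let $\{p_i\}_{i=1}^\infty\subset\mathcal P$ be the monotone increasing sequence with $p_i(x)\uparrow S(x)$ for every $x\in\mathbb R^n$ constructed above (the one used in the proof of Corollary~\ref{cor-ext1}); recall $p_i\in C(\mathbb R^n)\cap\mathcal L_{2s}$ and $0\leqslant p_i\leqslant g_y$. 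For each $i$ I would replace $p_i$ inside $B$ by its $s$-harmonic lifting: rescaling Lemma~\ref{exi1} from $B_1$ to $B$ (the data $p_i|_{\mathbb R^n\setminus B}$ lies in $\mathcal L_{2s}\cap C(\mathbb R^n)$), there is a unique $q_i\in C(\mathbb R^n)\cap\mathcal L_{2s}$ with
\begin{equation*}
(-\Delta)^s q_i=0 \ \text{ in } B, \qquad q_i=p_i \ \text{ on } \mathbb R^n\setminus B .
\end{equation*}

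First I would collect the elementary comparison facts. Since $(-\Delta)^s(q_i-p_i)=-(-\Delta)^sp_i\geqslant0$ in $\mathcal D'(B)$ and $q_i-p_i=0$ on $\mathbb R^n\setminus B$, Theorem~\ref{thm:silvestre-mp} gives $q_i\geqslant p_i$ on $\mathbb R^n$; the same principle applied to $q_{i+1}-q_i$ shows $\{q_i\}$ is nondecreasing, and applied to $q_i$ (with $q_i=p_i\geqslant0$ outside $B$) shows $q_i\geqslant0$. Also $q_i=p_i\leqslant g_y$ on $\mathbb R^n\setminus\Omega\subset\mathbb R^n\setminus B$. The one delicate point is the subsolution property on the rest of $\Omega$,
\begin{equation*}
(-\Delta)^s q_i\leqslant0 \qquad\text{in }\mathcal D'(\Omega),
\end{equation*}
which is immediate inside $B$ and inside $\Omega\setminus\overline B$ but has to be checked \emph{across $\partial B$}. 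Here I would use the local submean-value characterization of $s$-subharmonicity: a continuous $u\in\mathcal L_{2s}$ satisfies $(-\Delta)^su\leqslant0$ in $\mathcal D'(\Omega)$ as soon as, for every $x\in\Omega$ and every sufficiently small $\rho$, $u(x)$ is bounded above by the value at $x$ of the $s$-harmonic function on $B_\rho(x)$ with exterior datum $u$. For $x\in\Omega$ take $\rho$ small enough that $B_\rho(x)\subset B$ whenever $x\in B$; then, using $q_i\geqslant p_i$ on $\mathbb R^n$, $q_i=p_i$ off $B$, and $(-\Delta)^sp_i\leqslant0$ in $\mathcal D'(\Omega)$, the relevant fractional mean of $q_i$ dominates that of $p_i$, which dominates $p_i(x)$; and one checks $q_i(x)=p_i(x)$ when $x\in\partial B$ or $x\notin\overline B$, while for $x\in B$ the mean simply equals $q_i(x)$. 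Hence $q_i\in\mathcal P$. (Equivalently, one may run this through the mollifications $q_i\ast\gamma_\lambda$ of Theorem~\ref{thm:silvestra-semi}.)

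Granting $q_i\in\mathcal P$, the rest is short. From $q_i\in\mathcal P$ we get $q_i\leqslant S$, and together with $q_i\geqslant p_i$ this forces $q_i(x)\uparrow S(x)$ at every point of $\mathbb R^n$. By Remark~\ref{rmk2}, $\|q_i\|_{L^\infty(\mathbb R^n)}\leqslant C_y:=\max_{\xi\in\mathbb R^n\setminus\Omega}g_y(\xi)$, and each $q_i$ is $s$-harmonic in $B$; so the interior regularity estimate for the fractional Laplacian (see \cite{MR2270163}) gives a bound $\|q_i\|_{C^\alpha(B')}\leqslant C(n,s,B,B')\,C_y$ for every $B'\subset\subset B$ and some $\alpha\in(0,1)$. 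By Arzel\`a--Ascoli and the monotone pointwise convergence $q_i\uparrow S$, the convergence $q_i\to S$ is uniform on each $B'$, hence $S\in C^\alpha(B')\subset C(B')$. Since $B'\subset\subset B=B_\delta(x_0)$ and $x_0\in\Omega$ are arbitrary, $S$ is continuous in $\Omega$.

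The hard part is exactly the middle step — showing that the harmonic lifting $q_i$ remains a Perron subsolution throughout $\Omega$, i.e. that $(-\Delta)^sq_i\leqslant0$ holds distributionally across $\partial B$ and not merely in $B$ and in $\Omega\setminus\overline B$ separately. One cannot argue pointwise there because $p_i$ is only continuous, and nonlocality forces the operator near $\partial B$ to see the region $B$ on which $q_i>p_i$; the remedy is to phrase $s$-subharmonicity through the submean-value inequality over small balls (or through $q_i\ast\gamma_\lambda$) and to exploit $q_i\geqslant p_i$ on $\mathbb R^n$ together with $q_i=p_i$ on $\mathbb R^n\setminus B$. Everything else — the comparison principles, the uniform $L^\infty$ bound, and the passage to the limit via the interior estimate — is routine.
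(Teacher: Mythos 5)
Your proposal is correct, but it takes a genuinely different route from the paper's. The paper argues by contradiction via semi-continuity: $S$ is lower semi-continuous as a sup of continuous functions, and Corollary~\ref{cor-ext1} together with Theorem~\ref{thm:silvestra-semi} gives an upper semi-continuous version $\tilde S = \lim_{\lambda\to 0^+} S\ast\gamma_\lambda$; assuming $S(x_0)<\tilde S(x_0)$, the authors first show $\limsup_\lambda J_\lambda[S](x_0)>S(x_0)$, and then build a competitor $p=\max\{J_\lambda[S]-2h,0,p_1,\dots,p_M\}\in\mathcal P$ (the $p_m$ being the barriers of Lemma~\ref{lem:barrier}) with $p(x_0)>S(x_0)$, contradicting the definition of $S$. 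Your route is the classical harmonic--lifting version of Perron's method: lift each $p_i$ on a fixed ball $B\subset\subset\Omega$, prove $q_i\in\mathcal P$, deduce $q_i\uparrow S$ pointwise, and conclude via interior regularity for $s$-harmonic functions. A pleasant bonus of your approach is that it \emph{simultaneously} yields $(-\Delta)^sS=0$ in $B$, so the $s$-harmonicity and smoothness of $S$ in $\Omega$ come for free; the paper establishes those separately (in the proof of Theorem~\ref{thm:ex-1}) by a measure-theoretic contradiction argument.

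On the one point you flag as delicate --- that $q_i$ stays a subsolution across $\partial B$ --- you invoke a submean-value characterization of $s$-subharmonicity which, while true, is not in the paper and would need a proof. It can be avoided entirely using only the paper's Theorem~\ref{thm:maxcons} and locality of the distributional inequality: for $t>0$ set $q_i^{(t)}=\max\{p_i,\,q_i-t\}$. Since $q_i-p_i$ is continuous, nonnegative, and vanishes on $\mathbb R^n\setminus B$, there is a shrunken ball $B'\subset\subset B$ with $q_i-p_i<t$ outside $B'$, so $q_i^{(t)}=p_i$ on $\Omega\setminus\overline{B'}$, where it is a subsolution; on $B$ both $p_i$ and $q_i-t$ are subsolutions, so $q_i^{(t)}$ is too by Theorem~\ref{thm:maxcons}. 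The two open sets $B$ and $\Omega\setminus\overline{B'}$ cover $\Omega$, and a partition of unity shows $(-\Delta)^s$-subharmonicity is local in $\mathcal D'$; hence $(-\Delta)^sq_i^{(t)}\leqslant 0$ in $\mathcal D'(\Omega)$. Letting $t\to 0^+$ (dominated convergence against a fixed test function, using $0\leqslant q_i^{(t)}\leqslant q_i\leqslant\max g_y$) gives $(-\Delta)^s q_i\leqslant 0$ in $\mathcal D'(\Omega)$, hence $q_i\in\mathcal P$. The rest of your argument --- $q_i\geqslant p_i$ and $q_i\leqslant S$ forcing $q_i\uparrow S$, the uniform $L^\infty$ bound from Remark~\ref{rmk2}, and the interior H\"older estimate for $s$-harmonic functions passing to the limit --- is fine; you don't even need Arzel\`a--Ascoli, since the H\"older seminorm bound passes directly to the pointwise monotone limit.
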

\begin{proof}
$S$ is lower semi-continuous and as a $s$-subharmonic function, $S$ has an upper semi-continuous version in $\Omega$ (see Remark \ref{rmk5}):
\begin{equation}
  \tilde{S}(x)=\lim_{\lambda\rightarrow 0^+}S\ast \gamma_\lambda(x)
\end{equation}
 Hence,  it suffices to show that
  \begin{equation}
  \label{eq:S=S}
  S(x)=\tilde{S}(x)\qquad \text{for\ all}\quad x\in\Omega.
  \end{equation}
  We readily know (see Proposition~\ref{prop:gamma})
    \begin{equation}S(x)=\tilde{S}(x)\qquad\text{for\ a.e.}\ x\in\Omega.
  \end{equation}
  Then, by the semi-continuity of $S$ and $\tilde{S}$,we know:
  \begin{equation}
    S(x)\leqslant\tilde{S}(x).
  \end{equation}
  We now prove that \eqref{eq:S=S} is true by contradiction. Assume that \eqref{eq:S=S} is not true, then there exists $x_0\in\Omega$ such that
  \begin{equation}
  \label{eq:S<S}
    S(x_0)<\tilde{S}(x_0)=\lim_{\lambda\rightarrow 0^+}S\ast \gamma_\lambda(x_0).
  \end{equation}
  We complete the contradiction by the following two claims:
  \begin{enumerate}
    \item[Claim 1:]Assuming \eqref{eq:S<S} is true, then
    \begin{equation}
    \label{eq:JS>S}
      \limsup_{\lambda\rightarrow 0^+}J_\lambda[S](x_0)>S(x_0).
    \end{equation}
    Indeed, if \eqref{eq:JS>S} is not true, then
    \begin{equation}
      \limsup_{\lambda\rightarrow 0^+}J_\lambda[S](x_0)\leqslant S(x_0).
    \end{equation}
    I.e., for any $\epsilon>0$, if $\lambda$ is small enough, then
    \begin{equation}
    \label{eq:upsmi}
      J_\lambda[S](x_0)-S(x_0)\leqslant\epsilon.
    \end{equation}
    On the other hand, $S$ is lower semi-continuous, therefore, for sufficiently small $\lambda>0$,
    \begin{equation}
      S(x)\geqslant S(x_0)-\epsilon\qquad\text{for}\ x\in B_\lambda(x_0).
    \end{equation}
    Now, if we denote $E_\mu=\{x\in B_\mu(x_0)|S(x)>\frac{S(x_0)+\tilde{S}(x_0)}{2}\}$,  then one also calculates that:
    \begin{equation}
    \label{eq:lowsmi}
    \begin{aligned}
      J_\lambda[S](x_0)-S(x_0)&=\int_{B_\lambda(x_0)}j_\lambda(x_0-\eta) (S(\eta)-S(x_0))\intd\eta\\
      &\geqslant \int_{E_{\lambda/2}}j_\lambda(x_0-\eta) \frac{\tilde{S}(x_0)-S(x_0)}{2}\intd\eta -\int_{B_{\lambda}(x_0)\backslash E_{\lambda/2}}\epsilon j_\lambda(x_0-\eta) \intd\eta\\
      &\geqslant\frac{C(\tilde{S}(x_0)-S(x_0))}{2\lambda^n}m(E_{\lambda/2}) -\epsilon.
      \end{aligned}
    \end{equation}
    Combining \eqref{eq:upsmi} and \eqref{eq:lowsmi} and choosing $\epsilon$ sufficiently small, one derives
    \begin{equation}
      \frac{m(E_{\lambda/2})}{m(B_\lambda)}<\frac{1}{2}\qquad\text{for}\ \lambda<\lambda_0.
    \end{equation}
    Denote $F_\lambda=B_\lambda(x_0)\backslash E_\lambda$. Then
    \begin{equation}
     S(x)\leqslant\frac{S(x_0)+\tilde{S}(x_0)}{2} \quad\text{for}\ x\in F_\lambda\qquad\text{and}\qquad \frac{m(F_{\lambda/2})}{m(B_\lambda)}\geqslant\frac{1}{2}\ (\text{when}\ \lambda<\lambda_0).
    \end{equation}
    By the upper semi-continuity of $\tilde{S}$, one derives that for $\mu<\mu_0$ and $x\in B_\mu(x_0)$,
    \begin{equation}\tilde{S}(x)\leqslant \tilde{S}(x_0)+\epsilon.
    \end{equation}
    While, notice that
    \begin{equation}
      \int_{\mathbb{R}^n\backslash B_\mu}\gamma_\lambda(\eta) \intd\eta=\int_{\mathbb{R}^n\backslash B_{\mu/\lambda}}\gamma_1(\eta)\intd\eta.
    \end{equation}
    Hence, $\int_{\mathbb{R}^n\backslash B_{\mu/\lambda}}\gamma_1(\eta)d\eta$ vanishes as $\mu/\lambda\rightarrow \infty$.

    Therefore, by choosing $\mu<\mu_0$ and $\lambda<<\mu$, one estimates:
    \begin{equation}
    \begin{aligned}
      S\ast\gamma_\lambda(x_0)
      &=\int_{\mathbb{R}^n}S(\eta)\gamma_\lambda(x_0-\eta) \intd\eta\\
      &\leqslant \int_{\mathbb{R}^n\backslash B_\mu(x_0)}S(\eta)\gamma_\lambda(x_0-\eta) \intd\eta+
      \int_{ B_\mu(x_0)\backslash F_\lambda}S(\eta)\gamma_\lambda(x_0-\eta) \intd\eta\\
      &+\int_{F_\lambda}S(\eta)\gamma_\lambda(x_0-\eta) \intd\eta\\
      &\leqslant \epsilon+\int_{F_\lambda}\gamma_\lambda(x_0-\eta) \intd\eta\frac{S(x_0)+\tilde{S}(x_0)}{2}+\int_{B_\mu(x_0)\backslash F_\lambda}\gamma_\lambda(x_0-\eta) \intd\eta(\tilde{S}(x_0)+\epsilon)\\
      &\leqslant \tilde{S}(x_0)+2\epsilon-\frac{Cm(F_\lambda)}{\lambda^n} \frac{\tilde{S}(x_0)-S(x_0)}{2}.
      \end{aligned}
    \end{equation}

    Now, by choosing $\epsilon$ small enough, one can derive
    \begin{equation}
      S\ast\gamma_\lambda(x_0)<\tilde{S}(x_0)-\epsilon,
    \end{equation}
and thus contradicts the assumption \eqref{eq:S<S}. This contradiction shows that under the  assumption \eqref{eq:S<S}, one must have \eqref{eq:JS>S}.

    \item[Claim 2]:There exists a function $p\in\mathcal{P}$ such that
    \begin{equation}
      p(x_0)>S(x_0).
    \end{equation}

    Indeed, from Claim 1, we can assume:
    \begin{equation}
     \limsup_{\lambda\rightarrow 0^+}J_\lambda[S](x_0)=\lim_{k\rightarrow\infty}J_{\lambda_k}[S](x_0)= S(x_0)+4h\ (h>0).
    \end{equation}
    As a consequence, there exists $\lambda_1>0$ such that for any $\lambda<\lambda_1$,
    \begin{equation}
      J_\lambda[S](x_0)>S(x_0)+3h.
    \end{equation}

    We now begin the  construction of $p$:

    Seeing Lemma~\ref{lem:barrier}, there exists $\sigma>0$ such that for any $X\in\partial\Omega$, the corresponding barrier function constructed in Lemma~\ref{lem:barrier}, denote as $p_X$, satisfies:
    \begin{equation}
      g_y(x)-p_X(x)\leqslant |g_y(x)-g_y(X)|+|p_X(X)-p_X(x)|<h\qquad\text{for\ any}\quad x\in B_{2\sigma}(X).
    \end{equation}

    For such $\sigma$, there exists $\lambda_2>0$ such that for any $\lambda<\lambda_2$,
    \begin{equation}
    \label{eq:J>g}
      J_\lambda[S](x)\leqslant J_\lambda[g_y](x)\leqslant g_y(x)+h \qquad\text{for\ any}\quad x\in\mathbb{R}^n\backslash \Omega_\sigma.
    \end{equation}

    Since $\overline{\Omega}\backslash\Omega_\sigma$ is compact, and clearly
    \begin{equation}
      \bigcup_{X\in\partial\Omega} B_{2\sigma}(X)\supseteq \overline{\Omega}\backslash\Omega_\sigma,
    \end{equation}
    then there exists a finite subcover, say $\{B_{2\sigma}(X_m)\}_{m=1}^M$ that covers $\overline{\Omega}\backslash\Omega_\sigma$.
    We denote by $p_m=p_{X_m}$ the barrier function corresponding to the boundary points $X_m$.

    We can define a Perron sub-solution:
    \begin{equation}
      p(x)=\max\{J_\lambda[S](x)-2h,0,p_1(x),\cdots,p_M(x)\}.
    \end{equation}
    One easily observe that for $\lambda<\lambda_1$,
    \begin{equation}
      p(x_0)\geqslant J_\lambda[S](x_0)-2h> S(x_0)+3h-2h=S(x_0)+h>S(x_0).
    \end{equation}
    We now verify that for $\lambda<\lambda_0=:\min\{\lambda_1,\lambda_2,\sigma\}$, $p$ indeed is a Perron subsolution:
    \begin{itemize}
    \item $p$  is continuous, since it is the maximum of finitely many continuous functions.
      \item Clearly, $p\geqslant 0$.
      \item In fact, from \eqref{eq:J>g}, it clearly holds that:
    \begin{equation}
      J_\lambda[S](x)-2h\leqslant g_y(x)
      \qquad\text{for}\quad x\in\mathbb{R}^n\backslash \Omega.
    \end{equation}
    As a consequence,
    \begin{equation}
      p(x)\leqslant g_y(x)\qquad\text{for}\quad x\in\mathbb{R}^n\backslash \Omega.
    \end{equation}
    \item Observe the fact that for any $x\in\Omega\backslash \Omega_\sigma$,
there is $1\leqslant m\leqslant M$, such that $x\in B_{2\sigma}(X_m)$, and hence there is $p_m$, such that
\begin{equation}
  J_\lambda[S](x)-2h\leqslant g_y(x)-h<p_m(x).
\end{equation}
In sight of Theorem~\ref{thm:maxcons}, one derives the following:
\begin{equation}
  (-\Delta)^s p\leqslant 0\qquad\text{in}\quad \mathcal{D}^\prime(\Omega).
\end{equation}
 \end{itemize}
Claim 2 is proved now.
  \end{enumerate}
  Claim 2 clearly contradicts the definition of $S$. This contradiction indicates that \eqref{eq:S=S} must be true.
\end{proof}
Now we are ready to prove Theorem~\ref{thm:ex-1}, indeed, we are going to show $h_y=S$ is exactly the solution of problem \eqref{pbg}.
\begin{proof}[Proof of Theorem~\ref{thm:ex-1}]
Seeing Lemma~\ref{lem:cont}, we only to prove the following two claims:
\begin{itemize}
  \item $S$ is $s$-harmonic in $\Omega$
  \item $S$ is smooth in $\Omega$
\end{itemize}

  Seeing the fact that,
  \begin{equation}
    (-\Delta)^sS\leqslant 0\qquad\text{in}\quad\mathcal{D}^\prime(\Omega),
  \end{equation}
  there exists a non-negative Radon measure $\mu$ on $\Omega$ such that:
    \begin{equation}
    (-\Delta)^sS=-\mu\qquad\text{in}\quad\mathcal{D}^\prime(\Omega).
  \end{equation}
  We prove $\mu=0$ by contradiction.
  Suppose $\mu\neq 0$, then there exist a ball $B_{3r}(x_0)\subset \Omega$ such that $\mu(B_r(x_0))>0$.

  For $\lambda<r$, we can define
  \begin{equation}
    v_\lambda(x)=J_\lambda[S](x)+\int_{B_r(x_0)}J_\lambda[\Phi](x-y)\intd \mu_y.
  \end{equation}
  One calculates:
  \begin{equation}
    v_\lambda(x_0)\geqslant J_\lambda[S](x_0)+C_*r^{2s-n}\mu(B_r(x_0)).
  \end{equation}
  Clearly,
  \begin{equation}
    (-\Delta)^sv_\lambda\leqslant 0\qquad\text{in}\quad\mathcal{D}^\prime(\Omega_\lambda).
  \end{equation}
  While
  \begin{equation}
    v_\lambda(x)\leqslant J_\lambda[S](x)+C_*(2r)^{2s-n}\mu(B_r(x_0))\qquad\text{for} \quad x\in \mathbb{R}^n\backslash\Omega.
  \end{equation}
  Hence,
    \begin{equation}
    v_\lambda(x)-C_*(2r)^{2s-n}\mu(B_r(x_0))\leqslant J_\lambda[S](x)\qquad\text{for} \quad x\in \mathbb{R}^n\backslash\Omega.
  \end{equation}
  and
    \begin{equation}
    v_\lambda(x)-C_*(2r)^{2s-n}\mu(B_r(x_0))\geqslant J_\lambda[S](x_0)+C_sC_*r^{2s-n}\mu(B_r(x_0)).
  \end{equation}
  Then, one can use the same technique as in Claim 2 of Lemma~\ref{lem:cont} to construct a Perron sub-solution $p$ such that $p(x_0)>S(x_0)$. A contradiction!

  This contradiction shows that $S$ must be $s$ harmonic in $\Omega$.

The smoothness of $S$ in $\Omega$ follows immediately according to Lemma 4.1.
\end{proof}

\section{Properties of Green functions}
\label{sec.4}
In this section, we always assume $\Omega$ to be a bounded domain in $\mathbb{R}^n$ satisfying the uniform exterior ball condition, with $r$ being the uniform radius of exterior balls.

With Green function $G_\Omega(\cdot,\cdot)$ readily defined, it immediately follows from the Definition~\ref{def:GO} that
  \begin{corollary}
  \label{cor:delta}
    For any fixed $y\in\Omega$, $G_\Omega(\cdot, y)\in C(\mathbb{R}^n\backslash\{y\})\cap C^\infty(\Omega\backslash\{y\})$ and satisfies
    \begin{equation}
      (-\Delta)^s G_\Omega(\cdot,y)=\delta_y\qquad\text{in}\quad\mathcal{D}^\prime(\Omega).
    \end{equation}
  \end{corollary}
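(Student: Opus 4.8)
The plan is to read everything off Definition~\ref{def:GO}, which writes $G_\Omega(x,y)=\Phi(x-y)-h_y(x)$, and to combine the two facts already at our disposal: $(-\Delta)^s\Phi=\delta_0$ in the sense of distributions, and (from Theorem~\ref{thm:ex-1}) $h_y\in C(\mathbb{R}^n)\cap C^\infty(\Omega)$ with $(-\Delta)^sh_y=0$ in $\mathcal{D}^\prime(\Omega)$ and $h_y=g_y$ on $\mathbb{R}^n\backslash\Omega$, where $g_y(\cdot)=\Phi(\cdot-y)$.

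First I would record the regularity. Since $\Phi(\cdot-y)=a(n,s)|\cdot-y|^{2s-n}$ is real-analytic, hence $C^\infty$ and continuous, on $\mathbb{R}^n\backslash\{y\}$, while $h_y$ is continuous on $\mathbb{R}^n$ and $C^\infty$ on $\Omega$, the difference $G_\Omega(\cdot,y)$ lies in $C(\mathbb{R}^n\backslash\{y\})\cap C^\infty(\Omega\backslash\{y\})$. This is the first assertion.

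Next I would check that the distributional statement is meaningful, i.e. that $G_\Omega(\cdot,y)\in\mathcal{L}_{2s}$, so that the pairing $\varphi\mapsto\int_{\mathbb{R}^n}G_\Omega(x,y)(-\Delta)^s\varphi(x)\intd x$ is absolutely convergent for every $\varphi\in C_0^\infty(\mathbb{R}^n)$ (recall $(-\Delta)^s\varphi$ decays like $|x|^{-n-2s}$). For $\Phi(\cdot-y)$ this is a one-line computation: the singularity $|x-y|^{2s-n}$ is locally integrable because $n-2s<n$, and for large $|x|$ the integrand $|x-y|^{2s-n}(1+|x|^{n+2s})^{-1}$ decays like $|x|^{-2n}$. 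For $h_y$ it follows from the bounds established in the Perron construction, $0\leqslant h_y\leqslant g_y$ and $0\leqslant h_y\leqslant\max_{\mathbb{R}^n\backslash\Omega}g_y$, so $h_y$ is bounded and dominated by $\Phi(\cdot-y)$, hence also in $\mathcal{L}_{2s}$.

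Finally, for the equation, fix $\varphi\in\mathcal{D}(\Omega)$ and split
\[
\langle(-\Delta)^sG_\Omega(\cdot,y),\varphi\rangle=\int_{\mathbb{R}^n}\Phi(x-y)(-\Delta)^s\varphi(x)\intd x-\int_{\mathbb{R}^n}h_y(x)(-\Delta)^s\varphi(x)\intd x.
\]
The second integral equals $\langle(-\Delta)^sh_y,\varphi\rangle=0$ because $(-\Delta)^sh_y=0$ in $\mathcal{D}^\prime(\Omega)$ and $\operatorname{supp}\varphi\subset\Omega$. For the first, substitute $z=x-y$ and use that $(-\Delta)^s$ commutes with translations, $[(-\Delta)^s\varphi](z+y)=(-\Delta)^s[\varphi(\cdot+y)](z)$, together with $\varphi(\cdot+y)\in C_0^\infty(\mathbb{R}^n)$, to get $\int_{\mathbb{R}^n}\Phi(z)(-\Delta)^s[\varphi(\cdot+y)](z)\intd z=\langle(-\Delta)^s\Phi,\varphi(\cdot+y)\rangle=\langle\delta_0,\varphi(\cdot+y)\rangle=\varphi(y)$. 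Hence $\langle(-\Delta)^sG_\Omega(\cdot,y),\varphi\rangle=\varphi(y)=\langle\delta_y,\varphi\rangle$ for every $\varphi\in\mathcal{D}(\Omega)$, which is exactly $(-\Delta)^sG_\Omega(\cdot,y)=\delta_y$ in $\mathcal{D}^\prime(\Omega)$. There is no genuine obstacle here: the only two points that deserve a sentence of care are the $\mathcal{L}_{2s}$-membership that legitimizes the distributional pairings and the translation identity for the fractional Laplacian, both of which are routine.
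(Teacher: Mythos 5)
Your argument is correct and is exactly the decomposition the paper has in mind: the paper states the corollary as following ``immediately from the Definition'' $G_\Omega(x,y)=\Phi(x-y)-h_y(x)$, and you have simply written out the routine verifications (regularity of each summand, membership in $\mathcal{L}_{2s}$, translation of $(-\Delta)^s\Phi=\delta_0$, and vanishing of $(-\Delta)^s h_y$ in $\mathcal{D}'(\Omega)$) that the authors left implicit.
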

  \begin{proposition}
  \label{prop:estLp}
    For any fixed $y\in\Omega$ and any $1\leqslant p<\frac{n}{n-2s}$, $G_\Omega(\cdot,y)\in L^p(\Omega)$ with
    \begin{equation}
      \|G_\Omega(\cdot,y)\|_{L^p(\Omega)}\leqslant C(n,s,p,\diam(\Omega)).
    \end{equation}
  \end{proposition}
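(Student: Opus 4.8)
The plan is to compare $G_\Omega(\cdot,y)$ with the fundamental solution $\Phi(\cdot-y)$ and exploit the nonnegativity and smallness of the harmonic corrector $h_y$. First I would observe that, by Theorem~\ref{thm:ex-1} and the maximum principle (Theorem~\ref{thm:silvestre-mp}), $h_y\geqslant 0$ on $\mathbb{R}^n$ (since $h_y=g_y\geqslant 0$ on $\mathbb{R}^n\backslash\Omega$ and $(-\Delta)^s h_y=0$ in $\Omega$), and also $h_y\leqslant g_y(\cdot)$ pointwise is not quite what we want; rather the useful bound is $0\leqslant h_y(x)\leqslant \max_{\xi\in\mathbb{R}^n\backslash\Omega} g_y(\xi)=:M_y<\infty$ for all $x$, which follows from Remark~\ref{rmk2} applied to $S=h_y$. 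Consequently
\begin{equation}
0\leqslant G_\Omega(x,y)=\Phi(x-y)-h_y(x)\leqslant \Phi(x-y)=a(n,s)|x-y|^{2s-n}\qquad\text{for }x,y\in\Omega,
\end{equation}
where the lower bound $G_\Omega\geqslant 0$ itself comes from the maximum principle applied to $G_\Omega(\cdot,y)$ on $\Omega\backslash B_\rho(y)$ together with the positivity of $\Phi$ near the singularity.

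With the pointwise domination $0\leqslant G_\Omega(x,y)\leqslant a(n,s)|x-y|^{2s-n}$ in hand, the $L^p$ estimate is then a routine computation: for any $y\in\Omega$,
\begin{equation}
\int_\Omega G_\Omega(x,y)^p\intd x\leqslant a(n,s)^p\int_\Omega |x-y|^{(2s-n)p}\intd x\leqslant a(n,s)^p\int_{B_{\diam(\Omega)}(y)} |x-y|^{(2s-n)p}\intd x,
\end{equation}
and passing to polar coordinates centered at $y$, the last integral equals $a(n,s)^p\,|\mathbb{S}^{n-1}|\int_0^{\diam(\Omega)} r^{(2s-n)p+n-1}\intd r$, which is finite precisely when $(2s-n)p+n-1>-1$, i.e. $(n-2s)p<n$, i.e. $p<\frac{n}{n-2s}$. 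This yields $\|G_\Omega(\cdot,y)\|_{L^p(\Omega)}\leqslant C(n,s,p,\diam(\Omega))$ with a constant independent of $y$, as claimed.

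The only genuinely substantive point is establishing $G_\Omega(\cdot,y)\geqslant 0$ in $\Omega$; everything else is bookkeeping. For this I would fix a small $\rho>0$ with $\overline{B_\rho(y)}\subset\Omega$, note that $G_\Omega(\cdot,y)$ is $s$-harmonic in $\Omega\backslash\{y\}$, continuous on $\mathbb{R}^n\backslash\{y\}$, equals $0$ on $\mathbb{R}^n\backslash\Omega$, and blows up to $+\infty$ as $x\to y$; then on the domain $\Omega\backslash\overline{B_\rho(y)}$ it is $s$-harmonic with nonnegative exterior data (namely $0$ outside $\Omega$ and $G_\Omega(\cdot,y)\to+\infty$, hence positive, on $\partial B_\rho(y)$ for $\rho$ small), so Theorem~\ref{thm:silvestre-mp} gives $G_\Omega(\cdot,y)\geqslant 0$ there; letting $\rho\to0$ and combining with positivity near $y$ finishes it. The upper bound $G_\Omega\leqslant\Phi(\cdot-y)$ is then immediate from $h_y\geqslant 0$, which itself is just the maximum principle applied to $h_y$ with its nonnegative exterior datum $g_y$.
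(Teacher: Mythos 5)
Your proposal is correct and follows the same basic strategy as the paper: establish $0\leqslant G_\Omega(\cdot,y)\leqslant\Phi(\cdot-y)$ and then integrate in polar coordinates, with the upper bound condition $(2s-n)p+n>0$ exactly reproducing $p<\frac{n}{n-2s}$. One remark: you dismissed the bound $h_y\leqslant g_y$ as ``not quite what we want,'' but since $g_y(\cdot)=\Phi(\cdot-y)$, that inequality (Remark~\ref{rmk2} applied to $S=h_y$) \emph{is} precisely the lower bound $G_\Omega(x,y)=\Phi(x-y)-h_y(x)=g_y(x)-h_y(x)\geqslant 0$, making your separate maximum-principle argument on $\Omega\backslash\overline{B_\rho(y)}$ unnecessary (though it is also valid, provided one notes that the exterior data must be nonnegative on all of $\overline{B_\rho(y)}$, not just $\partial B_\rho(y)$, which holds for $\rho$ small since $\Phi$ blows up and $h_y$ is continuous hence bounded there).
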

  Proposition~\ref{prop:estLp} follows immediately from the following inequality:
  \begin{equation}
    0\leqslant G_\Omega(x,y)\leqslant \Phi(x-y).
  \end{equation}
  \begin{lemma}[Barrier function]
  \label{lem:barrierG}
    Let $X\in\partial\Omega$, $B_r(c)$ be an exterior ball of $\Omega$ at $X$. Then for any $x,y\in\Omega$, it holds that
    \begin{equation}
    \label{ineq:barrier}
      G_\Omega(x,y)\leqslant G_{\mathbb{R}^n\backslash\overline{B_r(c)}}(x,y).
      \end{equation}
      Here
      \begin{equation}
       G_{\mathbb{R}^n\backslash\overline{B_r(c)}}(x,y)= \frac{\kappa(n,s)}{|x-y|^{n-2s}} \int_{0}^{\rho_E(x,y)}\frac{t^{s-1}}{(1+t)^{\frac{n}{2}}}\intd t,
    \end{equation}
    for
    \begin{equation}
      \rho_E(x,y)=\frac{(|x-c|^2 -r^2)(|y-c|^2-r^2)}{r^2|x-y|^2}.
    \end{equation}
  \end{lemma}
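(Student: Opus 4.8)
The plan is to read \eqref{ineq:barrier} as a domain–monotonicity statement for Green functions. Since $B_r(c)$ is an exterior ball of $\Omega$ at $X$, we have $B_r(c)\cap\Omega=\emptyset$, hence $\Omega\subset\mathbb{R}^n\setminus\overline{B_r(c)}$, and for two nested domains the Green function of the larger one should dominate that of the smaller one. Concretely, I would first show that the function $G_{\mathbb{R}^n\setminus\overline{B_r(c)}}(\cdot,y)$ defined by the displayed formula equals $\Phi(\cdot-y)-p(\cdot)$, where $p$ is the $s$-harmonic barrier function attached to the ball $B_r(c)$ and the pole $y$ built in Lemma~\ref{lem:barrier}; then \eqref{ineq:barrier} follows from the maximum principle applied to the difference $G_{\mathbb{R}^n\setminus\overline{B_r(c)}}(\cdot,y)-G_\Omega(\cdot,y)=h_y-p$.

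For the identification step, recall from the proof of Lemma~\ref{lem:barrier} that the Kelvin transform along $\partial B_r(c)$ turns $g_y$ into a constant multiple of $g_\eta$ with $\eta=c+r^2(y-c)/|y-c|^2\in B_r(c)$; one then solves the Dirichlet problem in the bounded domain $B_r(c)$ with this exterior datum via Lemma~\ref{exi1}, obtaining $\hat p$, and transforms back to get $p\in C(\mathbb{R}^n)\cap\mathcal{L}_{2s}$ with $(-\Delta)^s p=0$ in $\mathbb{R}^n\setminus\overline{B_r(c)}$, $p=g_y$ on $\overline{B_r(c)}$, and $0\le p\le g_y$ in $\mathbb{R}^n$. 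Because $\hat g_y$ is a constant multiple of $g_\eta=\Phi(\cdot-\eta)$, the solution $\hat p$ is that same multiple of the harmonic part of the ball Green function $G_{B_r(c)}(\cdot,\eta)$, so after transforming back $\Phi(\cdot-y)-p(\cdot)$ is a constant multiple of the Kelvin transform of $G_{B_r(c)}(\cdot,\eta)$. Inserting the explicit ball Green function \eqref{G}, rescaled and translated to $B_r(c)$, and using the inversion identities $r^2-|x^\ast-c|^2=r^2(|x-c|^2-r^2)/|x-c|^2$ and $|x^\ast-y^\ast|=r^2|x-y|/(|x-c|\,|y-c|)$ for $x^\ast=c+r^2(x-c)/|x-c|^2$, converts $\rho$ into $\rho_E$ and reproduces the factor $|x-y|^{2s-n}$, the conformal and Jacobian factors collapsing into the constant $\kappa(n,s)$. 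In particular $G_{\mathbb{R}^n\setminus\overline{B_r(c)}}(\cdot,y)=\Phi(\cdot-y)-p(\cdot)\ge0$ and $(-\Delta)^s G_{\mathbb{R}^n\setminus\overline{B_r(c)}}(\cdot,y)=\delta_y$ in $\mathcal{D}'(\mathbb{R}^n\setminus\overline{B_r(c)})$.

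For the comparison step, set $w:=G_{\mathbb{R}^n\setminus\overline{B_r(c)}}(\cdot,y)-G_\Omega(\cdot,y)$. By Definition~\ref{def:GO}, $G_\Omega(\cdot,y)=\Phi(\cdot-y)-h_y$, so $w=h_y-p$; the singular parts cancel and $w\in C(\mathbb{R}^n)\cap\mathcal{L}_{2s}$. On $\Omega$, since $\Omega\subset\mathbb{R}^n\setminus\overline{B_r(c)}$, both $h_y$ (from Theorem~\ref{thm:ex-1}) and $p$ are $s$-harmonic, hence $(-\Delta)^s w=0$ in $\mathcal{D}'(\Omega)$. On $\mathbb{R}^n\setminus\Omega$, the boundary condition in \eqref{pbg} gives $h_y=g_y$, while $p\le g_y$ there, so $w\ge0$ on $\mathbb{R}^n\setminus\Omega$. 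The maximum principle Theorem~\ref{thm:silvestre-mp} then yields $w\ge0$ on all of $\mathbb{R}^n$, which gives \eqref{ineq:barrier} for $x,y\in\Omega$ (and trivially for $x\notin\Omega$, since there $G_\Omega(x,y)=0\le G_{\mathbb{R}^n\setminus\overline{B_r(c)}}(x,y)$).

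I expect the identification step to be the main obstacle: one is working on the unbounded domain $\mathbb{R}^n\setminus\overline{B_r(c)}$, to which the Perron construction of Section~\ref{sec.3} does not directly apply, so one must transport everything through the Kelvin transform to the bounded ball $B_r(c)$, carefully track the constant $\kappa(n,s)$, and verify the decay $p(x),\,G_{\mathbb{R}^n\setminus\overline{B_r(c)}}(x,y)\to0$ as $|x|\to\infty$ so that the comparison via Theorem~\ref{thm:silvestre-mp} closes with no boundary contribution at infinity. Once $G_{\mathbb{R}^n\setminus\overline{B_r(c)}}(\cdot,y)=\Phi(\cdot-y)-p(\cdot)$ is established, the comparison itself is immediate.
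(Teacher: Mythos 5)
Your proof is correct and follows essentially the same route as the paper: both identify $G_{\mathbb{R}^n\setminus\overline{B_r(c)}}$ via a Kelvin transform of the ball Green function \eqref{G}, note both Green functions satisfy $(-\Delta)^s(\cdot)=\delta_y$ in $\mathcal{D}'(\Omega)$ with $G_\Omega=0\le G_{\mathbb{R}^n\setminus\overline{B_r(c)}}$ on $\mathbb{R}^n\setminus\Omega$, and conclude by the maximum principle Theorem~\ref{thm:silvestre-mp}. Your explicit decomposition of the difference as $h_y-p$ is a welcome refinement — it cancels the common singularity at $y$ so that the maximum principle is applied to a genuinely continuous function, a step the paper's proof leaves implicit by asserting only continuity ``away from $y$.''
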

  \begin{proof}
    A Kelvin transformation criteria shows that to $G_{\mathbb{R}^n\backslash\overline{B_r(c)}}(x,y)$ is the Green function of  $\mathbb{R}^n\backslash\overline{B_r(c)}$ in the sense that
    \begin{equation}
      (-\Delta)^s G_{\mathbb{R}^n\backslash\overline{B_r(c)}}(\cdot,y)=\delta_y \qquad\text{in}\quad \mathcal{D}^\prime(\mathbb{R}^n\backslash\overline{B_r(c)}).
    \end{equation}
    Note the fact that both $G_{\mathbb{R}^n\backslash\Omega}(\cdot,y)$ and $G_{\mathbb{R}^n\backslash\overline{B_r(c)}}(\cdot,y)$ are continuous away from $y$, and satisfies:
    \begin{equation}
    \begin{aligned}
      (-\Delta)^sG_\Omega(\cdot,y)=\delta_y=(-\Delta)^s G_{\mathbb{R}^n\backslash\overline{B_r(c)}}(\cdot,y) &\qquad\text{in}&\quad \mathcal{D}^\prime(\mathbb{R}^n\backslash\Omega),\\
         G_\Omega(x,y)=0\leqslant G_{\mathbb{R}^n\backslash\overline{B_r(c)}}(x,y)&\qquad\text{for}&\quad x\in\mathbb{R}^n\backslash \Omega.
    \end{aligned}
    \end{equation}
    Then \eqref{ineq:barrier} is guaranteed by the maximum principle~\ref{thm:silvestre-mp}.
  \end{proof}
  \begin{proposition}
  \label{prop:estG}
  For any $x,y\in\Omega$, the following inequality holds:
    \begin{equation}
      G_\Omega(x,y)\leqslant C(n,s,r,\diam(\Omega))\frac{\dist(x,\partial \Omega)^s}{|x-y|^{n-s}}.
    \end{equation}
  \end{proposition}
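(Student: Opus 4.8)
The plan is to compare $G_\Omega$ with the exterior-of-a-ball Green function supplied by Lemma~\ref{lem:barrierG}, choosing the exterior ball at the point of $\partial\Omega$ closest to $x$. We may assume $x\neq y$ in $\Omega$, the inequality being trivial otherwise. Set $d=\dist(x,\partial\Omega)$, fix $X\in\partial\Omega$ with $|x-X|=d$, and let $B_r(c)$ be an exterior ball of $\Omega$ at $X$; as in the proof of Lemma~\ref{lem:barrierG} one has $|X-c|=r$ and $\overline{B_r(c)}\cap\Omega=\emptyset$. Lemma~\ref{lem:barrierG} then gives
\begin{equation*}
  G_\Omega(x,y)\leqslant G_{\mathbb{R}^n\backslash\overline{B_r(c)}}(x,y)=\frac{\kappa(n,s)}{|x-y|^{n-2s}}\,I\!\big(\rho_E(x,y)\big),\qquad I(\tau):=\int_0^\tau\frac{t^{s-1}}{(1+t)^{n/2}}\intd t,
\end{equation*}
so it suffices to prove $I(\rho_E(x,y))\leqslant C\,d^{\,s}|x-y|^{-s}$.

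The core of the argument is an elementary geometric estimate of $\rho_E(x,y)=\dfrac{(|x-c|^2-r^2)(|y-c|^2-r^2)}{r^2|x-y|^2}$. From $|X-c|=r$ and the triangle inequality, $|x-c|\leqslant d+r$, whence $|x-c|^2-r^2\leqslant d(d+2r)\leqslant 2(\diam\Omega+r)\,d$; likewise $|y-c|-r\leqslant|y-X|\leqslant|x-y|+d$ and $|y-c|+r\leqslant 2\diam\Omega+2r$, so $|y-c|^2-r^2\leqslant 2(\diam\Omega+r)(|x-y|+d)$. Writing $\alpha=d/|x-y|$ and $L=4(\diam\Omega+r)^2/r^2$, these multiply to $\rho_E(x,y)\leqslant L\,\dfrac{d(|x-y|+d)}{|x-y|^2}=L\,\alpha(1+\alpha)$. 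I expect this geometric step to be the main obstacle: one must resist bounding $|y-c|^2-r^2$ by a constant depending only on $\diam\Omega$ — that yields merely the weaker estimate $G_\Omega(x,y)\lesssim d^{\,s}|x-y|^{-n}$ — and instead keep the factor $|x-y|+d$, which is where the choice of $X$ as the nearest boundary point to $x$ (giving simultaneously $|x-c|-r\leqslant d$ and $|y-X|\leqslant|x-y|+d$) is used. This refinement is exactly what upgrades the exponent from $n$ to $n-s$.

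For the last step I would invoke two crude bounds on $I$: trivially $I(\tau)\leqslant\tau^{s}/s$, and, since $n>2s$, $I(\tau)\leqslant I(\infty)=B(s,\tfrac n2-s)<\infty$; together these give $I(\tau)\leqslant C_0(n,s)\min(\tau^{s},1)$. Now split on $\alpha$: if $\alpha\geqslant1$ then $I(\rho_E)\leqslant C_0\leqslant C_0\alpha^{s}$, while if $\alpha\leqslant1$ then $\rho_E\leqslant 2L\alpha$, hence $I(\rho_E)\leqslant C_0\rho_E^{s}\leqslant C_0(2L)^{s}\alpha^{s}$. In either case $I(\rho_E(x,y))\leqslant C(n,s,r,\diam\Omega)\,d^{\,s}|x-y|^{-s}$, and feeding this back into the displayed comparison — using $\alpha^{s}|x-y|^{-(n-2s)}=d^{\,s}|x-y|^{-(n-s)}$ — yields $G_\Omega(x,y)\leqslant C(n,s,r,\diam\Omega)\,\dist(x,\partial\Omega)^{s}|x-y|^{-(n-s)}$.
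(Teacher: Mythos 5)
Your proof is correct and follows essentially the same route as the paper: both apply Lemma~\ref{lem:barrierG} with the exterior ball at the boundary point nearest to $x$, bound $\rho_E$ by the triangle inequality, and exploit the two elementary bounds $I(\tau)\lesssim\tau^s$ and $I(\tau)\leqslant I(\infty)$. Your single unified estimate $\rho_E\leqslant L\,\alpha(1+\alpha)$ followed by the split on $\alpha=d/|x-y|$ is a slightly cleaner packaging of the paper's two-case argument (whose case labels, incidentally, appear inadvertently swapped relative to the inequalities used within them).
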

\begin{proof}
  Fix $x\in\Omega$, there exists $X\in\partial\Omega$ such that
  \begin{equation}
    |x-X|=\dist(x,\partial\Omega).
  \end{equation}
  Let $B_r(c)$ be an exterior ball of $\Omega$ at $X$.
  Then, we estimate as follow:
  \begin{enumerate}
    \item If $|x-y|\leqslant |x-X|$, then
    \begin{equation}
    \begin{aligned}
      |y-c|^2-r^2&\leqslant C(r,\diam(\Omega))(|y-c|-r)\\
      &\leqslant C(r,\diam(\Omega))(|x-X|+|x-y|) \leqslant C(r,\diam(\Omega))|x-y|.
    \end{aligned}
    \end{equation}
    Then
    \begin{equation}\rho_E(x,y)\leqslant C(r,\diam(\Omega))\frac{|x-X|}{|x-y|}.\end{equation}
    As a consequence of Lemma~\ref{lem:barrierG},
  \begin{equation}
  \begin{aligned}
    G_\Omega(x,y)&\leqslant G_{\mathbb{R}^n\backslash\overline{B_r(c)}}(x,y)\\
    &\leqslant C(n,s)\frac{1}{|x-y|^{n-2s}}(\rho_E(x,y))^s\\
    &\leqslant C(n,s,r,\diam(\Omega))\frac{|x-X|^s}{|x-y|^{n-s}}.
    \end{aligned}
  \end{equation}
  \item If $|x-y|>|x-X|$, then
  \begin{equation}
  \begin{aligned}
    G_\Omega(x,y)&\leqslant G_{\mathbb{R}^n\backslash\overline{B_r(c)}}(x,y)\\
    &\leqslant \kappa(n,s)\frac{1}{|x-y|^{n-2s}}\int_{0}^\infty \frac{t^{s-1}}{(1+t)^\frac{n}{2}}\intd t\\
    &=: C(n,s)\frac{1}{|x-y|^{n-2s}}
\leqslant C(n,s)\frac{|x-X|^s}{|x-y|^{n-s}}.
    \end{aligned}
  \end{equation}
  \end{enumerate}
Noting $|x-X|=\dist(x,\partial\Omega)$, we finish the proof.
\end{proof}
  \begin{remark}
  \label{rmk7}
    Similarly, one can also prove  for any $x,y\in\Omega$:
    \begin{equation}
      G_\Omega(x,y)\leqslant C(n,s,r,\diam(\Omega))\frac{\dist(y,\partial \Omega)^s}{|x-y|^{n-s}}.
    \end{equation}

\begin{definition}
  Analogously to the definition \eqref{Gf}, we denote
  \begin{equation}
    G_\Omega\ast f(x):= \int_{\mathbb{R}^n} f(y)G_\Omega(x,y)\intd y= \int_{\Omega} f(y)G_\Omega(x,y)\intd y.
  \end{equation}
\end{definition}

  \end{remark}
\begin{theorem}
\label{thm:Gf}
  For any $f\in C(\overline{\Omega})$, problem
\begin{equation}\label{pbf}
\left\{
\begin{aligned}
(-\Delta)^s u&=f\qquad&\text{in}\quad\Omega,\\
u&=0\qquad&\text{on}\quad\partial \Omega.
\end{aligned}
\right.
\end{equation}
  has a unique continuous solution: $u=G_\Omega\ast f$.
\end{theorem}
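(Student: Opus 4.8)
The plan is to represent the candidate solution as $u=G_\Omega\ast f = u_1-u_2$, where
\[
u_1(x)=\int_\Omega \Phi(x-y)f(y)\intd y,\qquad u_2(x)=\int_\Omega h_y(x)f(y)\intd y,
\]
with $h_y$ the function produced by Theorem~\ref{thm:ex-1}, and then to (i) show $u$ is continuous on $\mathbb{R}^n$ with $u=0$ off $\Omega$, (ii) verify $(-\Delta)^su=f$ in $\mathcal{D}^\prime(\Omega)$, and (iii) prove uniqueness by the maximum principle. As in Lemma~\ref{exi0}, the boundary condition in \eqref{pbf} is understood in the nonlocal sense, namely $u=0$ on $\mathbb{R}^n\backslash\Omega$ (which the representation will produce automatically, since $h_y=g_y=\Phi(\cdot-y)$ on $\mathbb{R}^n\backslash\Omega$, so $u_1\equiv u_2$ there); with $f$ merely continuous the first equation is taken in the distributional sense.

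For continuity I would use the bounds $0\leqslant G_\Omega(x,y)\leqslant\Phi(x-y)$ (see Proposition~\ref{prop:estLp}) and, crucially, $G_\Omega(x,y)\leqslant C\,\dist(x,\partial\Omega)^s|x-y|^{s-n}$ from Proposition~\ref{prop:estG}. Interior continuity follows by the usual potential-theoretic splitting: on a small ball $B_\delta(x)$ the contribution is controlled by $\|f\|_\infty\int_{B_\delta(x)}\Phi(x-y)\intd y = C\|f\|_\infty\delta^{2s}$, uniformly small; on the complement $G_\Omega(\cdot,y)$ is continuous and bounded, so dominated convergence applies. Continuity at and across $\partial\Omega$ comes from
$|u(x)|\leqslant C\|f\|_\infty\dist(x,\partial\Omega)^s\int_\Omega|x-y|^{s-n}\intd y\leqslant C\|f\|_\infty\dist(x,\partial\Omega)^s$,
which tends to $0$ as $x\to\partial\Omega$; together with $u\equiv0$ on $\mathbb{R}^n\backslash\Omega$ this gives $u\in C(\mathbb{R}^n)$ and $u=0$ on $\partial\Omega$.

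For the equation, given $\varphi\in C_0^\infty(\Omega)$ I would compute $\langle(-\Delta)^su,\varphi\rangle=\int_{\mathbb{R}^n}u(x)(-\Delta)^s\varphi(x)\intd x$ and move $(-\Delta)^s$ inside the $y$-integral by Fubini. For $u_1$ this yields $\int_\Omega f(y)\langle(-\Delta)^s\Phi(\cdot-y),\varphi\rangle\intd y=\int_\Omega f(y)\varphi(y)\intd y$ using $(-\Delta)^s\Phi=\delta_0$; for $u_2$ it yields $\int_\Omega f(y)\langle(-\Delta)^sh_y,\varphi\rangle\intd y=0$ by Theorem~\ref{thm:ex-1}. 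The Fubini steps are licit because $h_y(x)\leqslant\Phi(x-y)$, $f$ is bounded on the bounded set $\Omega$, and $|(-\Delta)^s\varphi(x)|\leqslant\min\{C_1,C_2|x|^{-n-2s}\}$ (exactly as in the proof of Corollary~\ref{cor-ext1}), so $\Phi(x-y)|(-\Delta)^s\varphi(x)|$ is integrable on $\Omega\times\mathbb{R}^n$; note also $h_y,\Phi(\cdot-y)\in\mathcal{L}_{2s}$ so the distributional operators are well defined. Hence $(-\Delta)^su=f$ in $\mathcal{D}^\prime(\Omega)$, and $u=G_\Omega\ast f$ solves \eqref{pbf}. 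Uniqueness is then immediate: if $u_a,u_b$ are two continuous solutions, $v=u_a-u_b\in C(\mathbb{R}^n)\subset\mathcal{L}_{2s}$ satisfies $(-\Delta)^sv=0$ in $\mathcal{D}^\prime(\Omega)$ and $v=0\geqslant0$ on $\mathbb{R}^n\backslash\Omega$, so Theorem~\ref{thm:silvestre-mp} gives $v\geqslant0$; applying it to $-v$ gives $v\leqslant0$, whence $v\equiv0$.

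The routine part is the uniqueness argument; the real work lies in Step~2, i.e.\ justifying the interchange of $(-\Delta)^s$ with the $y$-integration and thereby splitting the effect of the kernel into its singular local part ($(-\Delta)^s\Phi=\delta_0$) and the $s$-harmonic regular part ($(-\Delta)^sh_y=0$) — everything here relies on the pointwise bounds for $G_\Omega$ and $h_y$ from Sections~\ref{sec.3}--\ref{sec.4}. A secondary subtlety is that, with $f$ only continuous, one should not expect the equation to hold pointwise (contrast Lemma~\ref{exi0}, where $f\in C^{2s+\epsilon}$), so the distributional formulation is the natural one to prove.
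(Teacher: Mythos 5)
Your proof is correct and follows essentially the same route as the paper: Fubini to reduce the distributional identity to $(-\Delta)^s G_\Omega(\cdot,y)=\delta_y$ (you inline the decomposition $G_\Omega=\Phi-h_y$, while the paper just cites Corollary~\ref{cor:delta}, which is proved the same way), the boundary estimate $|u(x)|\leqslant C\dist(x,\partial\Omega)^s$ from Proposition~\ref{prop:estG} for continuity on $\partial\Omega$, and the maximum principle~\ref{thm:silvestre-mp} for uniqueness. One trivial slip: the inclusion $C(\mathbb{R}^n)\subset\mathcal{L}_{2s}$ is false in general, but the intended point stands because $v=u_a-u_b$ vanishes off the bounded set $\Omega$ and is therefore a bounded, compactly supported continuous function, hence in $\mathcal{L}_{2s}$.
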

  \begin{proof}
    For arbitrary $\varphi\in \mathcal{D}(\Omega)$, one calculates:
    \begin{equation}
      \begin{aligned}
        (-\Delta)^su[\varphi]=&\int_{\mathbb{R}^n}G_\Omega\ast f(x)\cdot (-\Delta)^s\varphi(x)\intd x\\
        =&\int_{\Omega}G_\Omega\ast f(x)\cdot (-\Delta)^s\varphi(x)\intd x\\
        =&\int_{\Omega} \int_{\Omega}G_\Omega(x,y)f(y)(-\Delta)^s\varphi(x)\intd y \intd x\\
        =&\int_{\Omega}\left[ \int_{\Omega}G_\Omega(x,y)(-\Delta)^s\varphi(x)\intd x\right] f(y)\intd y\\
        =&\int_{\Omega} \varphi(y)f(y)\intd y.\\
      \end{aligned}
    \end{equation}
    I.e.,
    \begin{equation}
      (-\Delta)^s u=f\qquad\text{in}\quad\mathcal{D}^\prime(\Omega).
    \end{equation}
    It is easy to see that $u$ is continuous on $\mathbb{R}^n\backslash \overline{\Omega}$ and $\Omega$. Hence, we only need to show the continuity of $u=G_\Omega\ast f$ on $\partial\Omega$.

    Indeed, for $x\in\Omega$,
    \begin{equation}
    \begin{aligned}
        |u(x)|=&\left|\int_{\Omega}G_\Omega(x,y) f(y)\intd y\right|\\
        \leqslant &\int_{\Omega}G_\Omega(x,y)|f(y)|\intd y\\
        \leqslant&C\dist(x,\partial\Omega)^s\int_{\Omega} \frac{|f(y)|}{|x-y|^{n-s}}\intd y\\
        \leqslant& C\dist(x,\partial\Omega)^s\rightarrow 0\qquad\text{as}\quad x\rightarrow\partial\Omega.\\
      \end{aligned}
    \end{equation}
    The continuity is proved. The maximum principle~\ref{thm:silvestre-mp} ensures the uniqueness of solution.
  \end{proof}
  \begin{lemma}
  \label{lem:invdelta}
  For fixed $x\in\Omega$,
    \begin{equation}
    \label{eq:invdelta}
      (-\Delta)^s G_\Omega(x,\cdot)=\delta_x\qquad\text{in}\quad \mathcal{D}^\prime(\Omega).
    \end{equation}
  \end{lemma}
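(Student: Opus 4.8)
The plan is to derive the identity from Corollary~\ref{cor:delta} together with the symmetry $G_\Omega(x,y)=G_\Omega(y,x)$, which is the real content. Since the distributional fractional Laplacian of a function in $\mathcal{L}_{2s}$ depends only on its a.e.\ equivalence class, it suffices to prove $G_\Omega(x,\cdot)=G_\Omega(\cdot,x)$ a.e.\ on $\mathbb{R}^n$ for the given $x$: indeed $G_\Omega(x,\cdot)\in\mathcal{L}_{2s}$ (it is supported in $\overline\Omega$ and dominated by $\Phi(x-\cdot)\in L^1$), and once the symmetry holds, $(-\Delta)^s_yG_\Omega(x,\cdot)=(-\Delta)^s_yG_\Omega(\cdot,x)=\delta_x$ in $\mathcal{D}^\prime(\Omega)$ by Corollary~\ref{cor:delta}.

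To establish $G_\Omega(x,y)=G_\Omega(y,x)$ for a.e.\ $(x,y)\in\Omega\times\Omega$, I would fix $f,g\in\mathcal{D}(\Omega)$ and set $u=G_\Omega\ast f$, $v=G_\Omega\ast g$. By Theorem~\ref{thm:Gf}, interior regularity, and Proposition~\ref{prop:estG}, the functions $u,v$ are continuous on $\mathbb{R}^n$, vanish on $\mathbb{R}^n\backslash\Omega$, are $C^\infty$ inside $\Omega$ with $(-\Delta)^su=f$, $(-\Delta)^sv=g$ there (classically and in $\mathcal{D}^\prime(\Omega)$), and satisfy $|u(z)|+|v(z)|\leqslant C\dist(z,\partial\Omega)^s$. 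The key claim is that $\int_\Omega u\,g\intd x=\int_\Omega v\,f\intd x$. Granting it, Fubini's theorem (applicable since $(x,y)\mapsto\Phi(x-y)$ is integrable on $\Omega\times\Omega$ and $G_\Omega(x,y)\leqslant\Phi(x-y)$) rewrites this as $\iint_{\Omega\times\Omega}\bigl(G_\Omega(x,y)-G_\Omega(y,x)\bigr)f(y)g(x)\intd x\intd y=0$ for all $f,g\in\mathcal{D}(\Omega)$, hence the a.e.\ symmetry. To pass from ``a.e.\ $x$'' to ``every $x\in\Omega$'' I would note that $x\mapsto G_\Omega(x,\cdot)$ is continuous from $\Omega$ into $L^1(\mathbb{R}^n)$ (from $0\leqslant G_\Omega(x,y)\leqslant\Phi(x-y)$, the continuity of $G_\Omega(\cdot,y)$ off the diagonal given by Corollary~\ref{cor:delta}, and dominated convergence); hence $x\mapsto(-\Delta)^s_yG_\Omega(x,\cdot)$ is continuous into $\mathcal{D}^\prime(\Omega)$ and, agreeing with $\delta_x$ on a full-measure set, agrees with it everywhere.

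The claim is a mollified integration by parts. For every $\epsilon>0$, $J_\epsilon[u]$ and $J_\epsilon[v]$ lie in $C_0^\infty(\mathbb{R}^n)$, and since $j_\epsilon$—hence $(-\Delta)^sj_\epsilon$—is even, one has
\[
\langle(-\Delta)^su,J_\epsilon[v]\rangle=\iint_{\mathbb{R}^n\times\mathbb{R}^n} u(x)\,v(y)\,\bigl[(-\Delta)^sj_\epsilon\bigr](x-y)\intd x\intd y=\langle(-\Delta)^sv,J_\epsilon[u]\rangle,
\]
the double integral being absolutely convergent because $u,v$ are bounded with compact support and $(-\Delta)^sj_\epsilon\in L^1$. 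Then I would let $\epsilon\to0$. Since $u\equiv0$ off $\Omega$, $\langle(-\Delta)^su,J_\epsilon[v]\rangle=\int_\Omega u(x)\,(-\Delta)^sJ_\epsilon[v](x)\intd x$. A standard splitting of the singular integral at a fixed interior scale, together with $J_\epsilon[v]\to v$ in $C^2_{\mathrm{loc}}(\Omega)$ and uniformly, gives $(-\Delta)^sJ_\epsilon[v](x)\to g(x)$ for every $x\in\Omega$. For a dominating function I would establish the $\epsilon$-uniform bound
\[
\bigl|(-\Delta)^sJ_\epsilon[v](x)\bigr|\leqslant C\,\dist(x,\partial\Omega)^{-s}\qquad(x\in\Omega,\ 0<\epsilon<1),
\]
by splitting $(-\Delta)^sJ_\epsilon[v](x)$ at the scale $\max\{\epsilon,\tfrac12\dist(x,\partial\Omega)\}$ and estimating the far part through $v\in C^s(\mathbb{R}^n)$ with $[v]_{C^s}\leqslant C$ (a consequence of the interior smoothness of $v$ and the bound $|v|\leqslant C\dist(\cdot,\partial\Omega)^s$), and the near part through the interior estimate $\|D^2v\|_{L^\infty(B_{d/2}(x))}\leqslant C\,d^{s-2}$ with $d=\dist(x,\partial\Omega)$ (respectively $\|D^2J_\epsilon[v]\|_{L^\infty}\leqslant C\epsilon^{s-2}$ when $\epsilon$ is the larger scale). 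Then $|u(x)(-\Delta)^sJ_\epsilon[v](x)|\leqslant C\dist(x,\partial\Omega)^{s}\dist(x,\partial\Omega)^{-s}=C\in L^1(\Omega)$, and dominated convergence gives $\langle(-\Delta)^su,J_\epsilon[v]\rangle\to\int_\Omega u\,g$; by the symmetric identity and the same reasoning, $\langle(-\Delta)^sv,J_\epsilon[u]\rangle\to\int_\Omega v\,f$, proving the claim.

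The main obstacle is exactly the $\epsilon$-uniform estimate $|(-\Delta)^sJ_\epsilon[v](x)|\leqslant C\dist(x,\partial\Omega)^{-s}$: it is where one must blend the boundary decay of $v=G_\Omega\ast g$ (Proposition~\ref{prop:estG}) with interior Schauder-type estimates for the fractional Laplacian and track their interaction under mollification. Once this estimate is in hand, the self-adjoint identity, the pointwise limit, the return to the symmetry of $G_\Omega$, and the upgrade to all $x\in\Omega$ are routine.
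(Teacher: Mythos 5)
Your route is valid in principle but genuinely different from---and considerably heavier than---the paper's. The paper proves Lemma~\ref{lem:invdelta} by a short duality-with-uniqueness argument that avoids all of the machinery you invoke: for $\varphi\in\mathcal{D}(\Omega)$ one has $(-\Delta)^s\varphi\in C(\overline{\Omega})$, so both $\varphi$ and $u=G_\Omega\ast[(-\Delta)^s\varphi]$ are continuous solutions of the Dirichlet problem with right-hand side $(-\Delta)^s\varphi|_\Omega$; Theorem~\ref{thm:Gf} (which rests only on Corollary~\ref{cor:delta} and the maximum principle) forces $u=\varphi$, and evaluating at $x$ gives
\begin{equation*}
\int_\Omega G_\Omega(x,y)\,(-\Delta)^s\varphi(y)\,\mathrm{d}y=\varphi(x)\qquad\text{for all }\varphi\in\mathcal{D}(\Omega),
\end{equation*}
which is exactly \eqref{eq:invdelta}. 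No integration by parts, no second-derivative estimates, no mollification.

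What you propose is to first establish the symmetry $G_\Omega(x,y)=G_\Omega(y,x)$ by a mollified Green identity $\langle(-\Delta)^su,J_\epsilon[v]\rangle=\langle(-\Delta)^sv,J_\epsilon[u]\rangle$ and a dominated-convergence limit, and then deduce \eqref{eq:invdelta} from Corollary~\ref{cor:delta}. This reverses the paper's deductive order (the paper proves symmetry as a \emph{consequence} of Lemma~\ref{lem:invdelta} and Corollary~\ref{cor:delta}); since you re-prove symmetry from scratch there is no circularity, but the price is steep. Your dominating function requires the $\epsilon$-uniform bound $|(-\Delta)^sJ_\epsilon[v](x)|\leqslant C\dist(x,\partial\Omega)^{-s}$, and to get it you have to import three ingredients that the paper simply never needs at this point: (i) the global H\"older regularity $v\in C^s(\mathbb{R}^n)$ with $[v]_{C^s}\leqslant C$ (the paper only records $u\in C^s(\Omega)$ as a corollary of the \emph{final} Theorem~\ref{thm:main+c}, so inside the paper's logical chain this is not available at Section~4 and you would have to cite Ros-Oton--Serra externally); (ii) the boundary decay $|v|\leqslant C\dist(\cdot,\partial\Omega)^s$ from Proposition~\ref{prop:estG}; and (iii) the interior estimate $\|D^2v\|_{L^\infty(B_{d/2}(x))}\leqslant Cd^{s-2}$ with $d=\dist(x,\partial\Omega)$, which on a second look does hold but requires a nontrivial scaling argument that tracks both the local $L^\infty$ bound on $v$ in $B_d(x)$ and the nonlocal tail $\int_{\mathbb{R}^n\setminus B_d(x)}|v(y)|\,|x-y|^{-n-2s}\,\mathrm{d}y$ (the latter is of size $d^{-2s}$, so the scaled function has a uniformly bounded $\mathcal{L}_{2s}$ norm, and only then do interior fractional Schauder estimates give the $d^{s-2}$ rate). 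You then still need the $L^1$-continuity-in-$x$ step to upgrade ``a.e.\ $x$'' to ``every $x\in\Omega$''. In short, the proposal is plausible and not wrong, but it solves the problem the long way round, leans on boundary-regularity facts outside what the paper has proved at this stage, and re-derives as a lemma the symmetry theorem that the paper obtains in three lines \emph{from} Lemma~\ref{lem:invdelta}.
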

  \begin{proof}
    For any $\varphi\in \mathcal{D}(\Omega)$, we have $(-\Delta)^s\varphi\in C(\overline{\Omega})$.
    Let $u=G_\Omega\ast[(-\Delta)^s\varphi]$.
    Then  Theorem~\ref{thm:Gf} shows that $u$ is the only continuous function satisfying
\begin{equation}
\left\{
\begin{aligned}
(-\Delta)^s u&=(-\Delta)^s\varphi \qquad&\text{in}\quad\Omega,\\
u&=0=\varphi\qquad&\text{on}\quad\partial \Omega.
\end{aligned}
\right.
\end{equation}
Then $u(x)=\varphi(x)$, which indicates \eqref{eq:invdelta}.
  \end{proof}
\begin{theorem}
  $G_\Omega(\cdot,\cdot)$ is symmetric, i.e.
  \begin{equation}
  \label{eq:symmG}
    G_\Omega(x,y)=G_\Omega(y,x).
  \end{equation}
\end{theorem}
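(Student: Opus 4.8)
The plan is to exploit the two dual characterizations of the Green function that have already been established: for fixed $y$, $G_\Omega(\cdot,y)$ solves $(-\Delta)^s G_\Omega(\cdot,y)=\delta_y$ in $\mathcal{D}'(\Omega)$ with zero exterior data (Corollary~\ref{cor:delta}), and for fixed $x$, $G_\Omega(x,\cdot)$ solves $(-\Delta)^s G_\Omega(x,\cdot)=\delta_x$ in $\mathcal{D}'(\Omega)$ with zero exterior data (Lemma~\ref{lem:invdelta}). The standard route is a symmetrized integration-by-parts (Green's-identity) argument: I would fix two distinct points $x_0,y_0\in\Omega$ and try to test the equation for $G_\Omega(\cdot,y_0)$ against $G_\Omega(x_0,\cdot)$ and vice versa, so that both pairings collapse to the same bilinear form $\iint \frac{(G_\Omega(\cdot,y_0)(a)-G_\Omega(\cdot,y_0)(b))(G_\Omega(x_0,\cdot)(a)-G_\Omega(x_0,\cdot)(b))}{|a-b|^{n+2s}}\,da\,db$, yielding $G_\Omega(x_0,y_0)=G_\Omega(y_0,x_0)$.

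The technical obstacle, and the step I expect to be the hardest, is that neither $G_\Omega(\cdot,y_0)$ nor $G_\Omega(x_0,\cdot)$ is an admissible test function in $\mathcal{D}(\Omega)$: each has a $|{\cdot}-y_0|^{2s-n}$ (resp. $|{\cdot}-x_0|^{2s-n}$) singularity and is merely continuous (not smooth, not compactly supported) up to $\partial\Omega$. To get around this I would use a double approximation. First, mollify and smoothly truncate near the singularity: for small $\delta>0$ let $\varphi_\delta$ be a smooth function equal to $G_\Omega(x_0,\cdot)$ outside $B_\delta(x_0)$, suitably regularized inside, and similarly $\psi_\delta\approx G_\Omega(\cdot,y_0)$ near $y_0$. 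The contributions from the $\delta$-balls are controlled using the $L^p$ bound $0\le G_\Omega(x,y)\le \Phi(x-y)$ from Proposition~\ref{prop:estLp} together with the fact that $(-\Delta)^s$ applied to the mollified pieces stays locally integrable; as $\delta\to0$ these error terms vanish and the $\delta$-ball pairings reproduce the point evaluations $G_\Omega(x_0,y_0)$ and $G_\Omega(y_0,x_0)$. Second, to handle the lack of compact support I would multiply by a cutoff that is $1$ on $\Omega_\epsilon$ and supported in $\Omega$; here the key input is the boundary decay estimate of Proposition~\ref{prop:estG}, $G_\Omega(x,y)\le C\,\dist(x,\partial\Omega)^s/|x-y|^{n-s}$, which with its companion in Remark~\ref{rmk7} forces the boundary-layer corrections to tend to $0$ as $\epsilon\to0$ (this is precisely where condition \eqref{cond0}-type integrability comes into play).

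Concretely, the steps in order are: (1) fix $x_0\neq y_0$ in $\Omega$ and reduce to proving $\langle(-\Delta)^sG_\Omega(\cdot,y_0),\,G_\Omega(x_0,\cdot)\rangle=\langle(-\Delta)^sG_\Omega(x_0,\cdot),\,G_\Omega(\cdot,y_0)\rangle$, both equal to the symmetric Gagliardo bilinear form; (2) justify this pairing by the $\delta$-regularization near the poles, estimating the error with $G_\Omega\le\Phi$ and dominated convergence; (3) justify the truncation near $\partial\Omega$ using the $\dist(\cdot,\partial\Omega)^s$ decay of Propositions~\ref{prop:estG} and Remark~\ref{rmk7}; (4) pass to the limit to obtain $G_\Omega(x_0,y_0)=G_\Omega(y_0,x_0)$; (5) note the identity is trivial when $x_0$ or $y_0$ lies outside $\Omega$ since both sides vanish, and extend to $x_0=y_0$ by continuity off the diagonal or simply observe both sides are $+\infty$ there. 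An alternative, possibly cleaner, packaging of steps (2)–(4): apply Theorem~\ref{thm:Gf} with $f=(-\Delta)^s\varphi$ for $\varphi\in\mathcal{D}(\Omega)$ to get $\int_\Omega G_\Omega(x_0,y)(-\Delta)^s\varphi(y)\,dy=\varphi(x_0)$, then let $\varphi$ run through a sequence approximating $G_\Omega(\cdot,y_0)$ in a topology strong enough that $(-\Delta)^s\varphi\to\delta_{y_0}$ weakly against $G_\Omega(x_0,\cdot)$ — which again reduces to the same singularity and boundary-decay estimates. The main obstacle throughout is making these two limits (pole-regularization and boundary-truncation) rigorous simultaneously, and the $L^p$ and $\dist^s$ estimates already proved are exactly tailored to that.
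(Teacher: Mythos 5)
Your route is the classical Green's-identity / bilinear-form argument, and while it can in principle be made to work, it is a genuinely different and substantially heavier path than the one the paper takes. The paper avoids the entire double-limit machinery (pole regularization plus boundary truncation, and the absolute-convergence questions for the Gagliardo form with two singular, non-compactly-supported functions) by a single cancellation trick. Fix $y\in\Omega$ and set $v(x)=G_\Omega(x,y)-G_\Omega(y,x)$. Corollary~\ref{cor:delta} gives $(-\Delta)^s_x G_\Omega(x,y)=\delta_y$ in $\mathcal D'(\Omega)$, and Lemma~\ref{lem:invdelta}, applied with the roles of the slots swapped, gives $(-\Delta)^s_x G_\Omega(y,x)=\delta_y$ in $\mathcal D'(\Omega)$ as well; the $\delta_y$'s cancel and $(-\Delta)^s v=0$ in $\mathcal D'(\Omega)$. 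Hence $v$ is smooth in $\Omega$, vanishes identically on $\mathbb R^n\setminus\Omega$, and the boundary-decay estimates of Proposition~\ref{prop:estG} and Remark~\ref{rmk7} force $v\to 0$ at $\partial\Omega$, so $v\in C(\mathbb R^n)$. The maximum principle (Theorem~\ref{thm:silvestre-mp}) then yields $v\equiv 0$.

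What the paper's version buys is that it never has to touch the bilinear pairing at all: by subtracting first, the singularities that make your pairing argument delicate are gone before any limit is taken, and the remaining object $v$ is exactly in the class where the maximum principle applies with no approximation needed. In your sketch the most serious unresolved point is precisely the one you flagged as the hardest: the claim that both pairings ``collapse to the same bilinear form'' presupposes that the Gagliardo integral with $u=G_\Omega(\cdot,y_0)$ and $v=G_\Omega(x_0,\cdot)$ is absolutely convergent, and this is not obvious (near $a\approx b\approx y_0$ one is balancing a $|a-y_0|^{2s-n}$ blow-up against the $|a-b|^{-n-2s}$ kernel, and the naive estimates only close in some ranges of $s$); one would need a careful dyadic decomposition to justify it. The subtraction-plus-maximum-principle argument renders all of that moot, which is why it is the route the paper takes.
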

\begin{proof}
For fixed $y\in\Omega$, let $v(x)=G_\Omega(x,y)-G_\Omega(y,x)$.
Then Corollary~\ref{cor:delta} and Lemma~\ref{lem:invdelta} shows that
\begin{equation}
  (-\Delta)^sv=0\qquad\text{in}\quad\mathcal{D}^\prime(\Omega).
\end{equation}
As a consequence, $v$ is smooth in $\Omega$. Note that $v(x)=0$ for $x\in\mathbb{R}^n\backslash\Omega$.
And Remark~\ref{rmk7} also ensures the continuity of $v$ on $\partial\Omega$. Then we know $v$ is continuous in $\mathbb{R}^n$.

The maximum principle~\ref{thm:silvestre-mp} then shows that, $v(x)=0$, i.e. \eqref{eq:symmG} holds.
\end{proof}

\section{Poisson kernel}
\label{sec.5}
In this section, we still assume $\Omega$ to be a bounded domain in $\mathbb{R}^n$ satisfying the uniform exterior ball condition, with $r$ being the uniform radius of exterior balls.
We also need the following terminologies of domain shifting:
Let $\Omega$ be a bounded domain in $\mathbb{R}^n$, we denote
\begin{equation}
  \Omega_\epsilon=\{x\in\Omega\mid\dist(x,\partial\Omega)>\epsilon\} =\{x\in\Omega\mid \overline{B_\epsilon(x)}\subset\Omega\},
\end{equation}
\begin{equation}
  \Omega^\epsilon=\{x\in\mathbb{R}^n\mid\dist(x,\Omega)<\epsilon\} =\bigcup_{x\in\Omega}B_\epsilon(x) =\bigcup_{x\in\Omega}\overline{B_\epsilon(x)}.
\end{equation}

\begin{definition}
Define the Poisson kernel of $\Omega$ to be
\begin{equation}
  P_\Omega(x,y)=C_{n,s}\int_\Omega \frac{G_\Omega(x,z)}{|z-y|^{n+2s}}\intd z\qquad x\in\Omega\quad y\in\mathbb{R}^n\backslash\Omega.
\end{equation}
\end{definition}
\begin{definition}
  Analogously to the definition \eqref{Pg}, we denote
  \begin{equation}
    P_\Omega\ast g(x):= \begin{cases}
      \displaystyle\int_{\mathbb{R}^n\backslash\Omega} g(y)P_\Omega(x,y)\intd y &\qquad\text{for}\quad x\in\Omega,\\
      g(x)&\qquad\text{for}\quad x\in\mathbb{R}^n\backslash \Omega.
    \end{cases}
  \end{equation}
\end{definition}
Then one instantly observes that:
\begin{theorem}
\label{thm:Pg0}
  For any $g\in C^\infty(\mathbb{R}^n)\cap\mathcal{L}_{2s}$ with $g=0$ in $\Omega$, problem
  \begin{equation}\label{pb:og}
\left\{
\begin{aligned}
(-\Delta)^s u&=0\qquad&\text{in}\quad&\Omega,\\
u&=g\qquad&\text{on}\quad&\partial \Omega.
\end{aligned}
\right.
\end{equation}
has a unique continuous solution: $u=P_\Omega\ast g$.
\end{theorem}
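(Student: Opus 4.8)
The plan is to reduce the exterior Dirichlet problem \eqref{pb:og} to the interior source problem already solved in Theorem~\ref{thm:Gf}. Write $\phi:=(-\Delta)^s g$; since $g\in C^\infty(\mathbb{R}^n)\cap\mathcal{L}_{2s}$, a routine dominated–convergence argument (second–order Taylor cancellation near the diagonal, the $\mathcal{L}_{2s}$–bound far away) gives $\phi\in C(\mathbb{R}^n)$, the only delicate point being the behavior near $\partial\Omega$, deferred to the last paragraph. Because $g\equiv 0$ in $\Omega$, for every $z\in\Omega$ there is no principal value and
\begin{equation}
\phi(z)=C_{n,s}\int_{\mathbb{R}^n}\frac{g(z)-g(y)}{|z-y|^{n+2s}}\intd y=-C_{n,s}\int_{\mathbb{R}^n\backslash\Omega}\frac{g(y)}{|z-y|^{n+2s}}\intd y,
\end{equation}
which is exactly the kernel in the definition of $P_\Omega$. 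This suggests taking $u:=g+w$, where $w:=-G_\Omega\ast\phi=G_\Omega\ast(-\phi)$ is the unique continuous function furnished by Theorem~\ref{thm:Gf} with source $-\phi\in C(\overline\Omega)$, so that $(-\Delta)^s w=-\phi$ in $\mathcal{D}'(\Omega)$, $w\in C(\mathbb{R}^n)$ and $w\equiv 0$ on $\mathbb{R}^n\backslash\Omega$.

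First I would check $u=P_\Omega\ast g$ on all of $\mathbb{R}^n$. For $x\notin\Omega$ one has $G_\Omega(x,\cdot)\equiv 0$, hence $w(x)=0$ and $u(x)=g(x)=P_\Omega\ast g(x)$. For $x\in\Omega$, Fubini gives
\begin{equation}
P_\Omega\ast g(x)=\int_{\mathbb{R}^n\backslash\Omega}g(y)\,C_{n,s}\!\int_\Omega\frac{G_\Omega(x,z)}{|z-y|^{n+2s}}\intd z\,\intd y=\int_\Omega G_\Omega(x,z)\bigl(-\phi(z)\bigr)\intd z=w(x)=w(x)+g(x),
\end{equation}
the last equality using $g(x)=0$. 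The interchange is legitimate since $G_\Omega(x,\cdot)\in L^1(\Omega)$ by Proposition~\ref{prop:estLp}, and $\int_{\mathbb{R}^n\backslash\Omega}|g(y)|\,|z-y|^{-n-2s}\intd y$ is bounded uniformly for $z\in\overline\Omega$ (see below). Thus $u=P_\Omega\ast g$ everywhere.

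Next I would read off the defining properties of a solution. Continuity of $u$ on $\mathbb{R}^n$ follows from $w\in C(\mathbb{R}^n)$ and $g\in C(\mathbb{R}^n)$; since $w\equiv 0$ off $\Omega$, we get $u=g$ on $\mathbb{R}^n\backslash\Omega$, so the exterior data hold. For the equation, $(-\Delta)^s u=(-\Delta)^s w+(-\Delta)^s g=-\phi+\phi=0$ in $\mathcal{D}'(\Omega)$ (hence pointwise in $\Omega$, $u$ being $s$–harmonic there). For uniqueness, if $u_1,u_2$ are two continuous solutions then $v:=u_1-u_2$ vanishes off the bounded set $\Omega$ and is continuous, so $v\in\mathcal{L}_{2s}$, and it satisfies $(-\Delta)^s v=0$ in $\mathcal{D}'(\Omega)$, $v=0$ on $\mathbb{R}^n\backslash\Omega$; applying Theorem~\ref{thm:silvestre-mp} to $v$ and to $-v$ forces $v\equiv 0$.

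The only genuinely delicate step is the uniform bound on $\int_{\mathbb{R}^n\backslash\Omega}|g(y)|\,|z-y|^{-n-2s}\intd y$ as $z\to\partial\Omega$, where the singularity is not tempered by any cancellation. Here one uses that $g$ is smooth with $g\equiv 0$ on $\Omega$, so both $g$ and $\nabla g$ vanish on $\overline\Omega$ by continuity; hence $|g(y)|\le C\,\dist(y,\overline\Omega)^2\le C|z-y|^2$ for $y$ in a bounded neighborhood of $\overline\Omega$ and any $z\in\overline\Omega$, and $\int|z-y|^{2-n-2s}\intd y$ converges near $y=z$ for every $0<s<1$; the contribution of $y$ far from $\overline\Omega$ is controlled by $\|g\|_{\mathcal{L}_{2s}}$. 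The same quadratic decay underlies the continuity of $\phi$ up to $\partial\Omega$ invoked in the first paragraph, so all the pieces fit together.
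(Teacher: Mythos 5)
Your proof is correct and follows essentially the same route as the paper: rewrite $(-\Delta)^s g$ on $\Omega$ as the kernel integral, solve the interior problem with source $-(-\Delta)^s g$ via Theorem~\ref{thm:Gf}, set $u=g+w$, identify $u=P_\Omega\ast g$ by Fubini, and invoke the maximum principle for uniqueness. You supply more detail than the paper on why $(-\Delta)^s g\in C(\overline\Omega)$ (the quadratic-decay bound near $\partial\Omega$ coming from $g,\nabla g$ vanishing there), but this is a fleshing-out of a step the paper states as clear, not a different argument.
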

\begin{proof}
  Indeed, one can calculate for $x\in\Omega$:
  \begin{equation}
  \begin{aligned}
    (-\Delta)^sg(x)&=C_{n,s}\int_{\mathbb{R}^n}\frac{g(x)-g(y)}{|x-y|^{n+2s}}\intd y\\
    &=-C_{n,s}\int_{\mathbb{R}^n\backslash\Omega}\frac{g(y)}{|x-y|^{n+2s}}\intd y.
  \end{aligned}
  \end{equation}
  Clearly $(-\Delta)^s g\in C(\overline{\Omega})$. Let $v=G_\Omega\ast[(-\Delta)^sg]$ then Theorem~\ref{thm:Gf} shows that $v\in C(\mathbb{R}^n)$ with
  \begin{equation}
  \begin{cases}
    (-\Delta)^s v=(-\Delta)^s g&\qquad\text{in}\quad\Omega,\\
    v=0&\qquad\text{on}\quad\mathbb{R}^n\backslash\Omega.
    \end{cases}
  \end{equation}
  Hence letting $u=g-v$, we know $u$ is a continuous solution of \eqref{pb:og}.
  For $x\in\Omega$, one calculates
  \begin{equation}
    \begin{aligned}
      u(x)&=g(x)-v(x)\\
      &=0-\int_{\Omega}G(x,z)(-\Delta)^s g(z)\intd z\\
      &=\int_{\Omega}G(x,z)\left[C_{n,s} \int_{\mathbb{R}^n\backslash\Omega}\frac{g(y)}{|z-y|^{n+2s}}\intd y\right]\intd z\\
      &=C_{n,s}\int_{\mathbb{R}^n\backslash\Omega}g(y)\left[\int_\Omega \frac{G_\Omega(x,z)}{|z-y|^{n+2s}}\intd z \right]\intd y\\
      &=\int_{\mathbb{R}^n\backslash\Omega}g(y)P_\Omega(x,y)\intd y.
    \end{aligned}
  \end{equation}
  The uniqueness comes immediately from the maximum principle~\ref{thm:silvestre-mp}.
\end{proof}
With Theorem~\ref{thm:Pg0} proved, we can estimate:
\begin{lemma}[Barrier function]
\label{lem:barrierP}
  Let $B_r(c)$ be an exterior ball of $\Omega$, for $x\in\Omega$, $y\in B_r(c)$, one estimates:
  \begin{equation}
    P_\Omega(x,y)\leqslant c(n,s)\left(\frac{|x-c|^2-r^2}{r^2-|y-c|^2}\right)^s\frac{1}{|x-y|^n}.
  \end{equation}
\end{lemma}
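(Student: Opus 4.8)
The plan is to compare $P_\Omega$ with the Poisson kernel of the complement of the exterior ball, exactly in the spirit of the barrier arguments for the Green function (Lemmas \ref{lem:barrierG} and \ref{prop:estG}). Let $B_r(c)$ be an exterior ball of $\Omega$, so that $\Omega\subset\mathbb{R}^n\backslash\overline{B_r(c)}$. The exterior domain $E:=\mathbb{R}^n\backslash\overline{B_r(c)}$ has an explicit Poisson kernel obtained from the ball Poisson kernel \eqref{P} via the Kelvin transform; call it
\begin{equation}
P_E(x,y)=c(n,s)\left(\frac{|x-c|^2-r^2}{r^2-|y-c|^2}\right)^s\frac{1}{|x-y|^n},\qquad x\in E,\ y\in B_r(c).
\end{equation}
This is the function on the right-hand side of the claimed inequality, so the whole lemma reduces to proving $P_\Omega(x,y)\le P_E(x,y)$ for $x\in\Omega$, $y\in B_r(c)$.

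First I would fix $y\in B_r(c)$ and view both $x\mapsto P_\Omega(x,y)$ and $x\mapsto P_E(x,y)$ as functions of $x$. By the definition of $P_\Omega$ and Fubini/positivity, $P_\Omega(\cdot,y)$ is $s$-harmonic in $\Omega$ in $\mathcal{D}'(\Omega)$; indeed $P_\Omega(\cdot,y)=P_\Omega\ast g$ for the boundary datum $g=\delta$-like mass concentrated near $y$, but more cleanly: from Theorem \ref{thm:Pg0} and the identity for $(-\Delta)^s g$ on $\Omega$, one has $(-\Delta)^s P_\Omega(\cdot,y)=0$ in $\mathcal{D}'(\Omega)$ and $P_\Omega(x,y)=0$ for $x\in\mathbb{R}^n\backslash\Omega$ away from $y$. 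The analogous facts for $P_E$ come from the Kelvin-transform construction: $(-\Delta)^s P_E(\cdot,y)=0$ in $\mathcal{D}'(E)$, and $P_E(x,y)\ge 0$ for all $x$, with $P_E(x,y)=0$ for $x\in B_r(c)$, $x\neq y$. Then I would set $w(x)=P_E(x,y)-P_\Omega(x,y)$ and check the hypotheses of the maximum principle (Theorem \ref{thm:silvestre-mp} or, if only a distributional inequality is available, Theorem \ref{thm:silvestre-mp} applied to the appropriate semicontinuous version): on $\Omega$ we have $(-\Delta)^s w=0$ in $\mathcal{D}'(\Omega)$; on $\mathbb{R}^n\backslash\Omega$ we have $P_\Omega(x,y)=0$ (for $x\neq y$; the single point $y$ is negligible since $P_E$ is locally integrable there and $w\in\mathcal{L}_{2s}$), hence $w(x)=P_E(x,y)\ge 0$. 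The maximum principle then yields $w\ge 0$ in $\mathbb{R}^n$, which is precisely the claimed estimate.

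The main obstacle I expect is the behavior at the singular point $x=y$ and the verification that $P_E$ is genuinely the Poisson kernel of $E$, i.e. that the Kelvin transform of the ball Poisson kernel \eqref{P} solves $(-\Delta)^s u=0$ in $E$ with boundary data supported in $B_r(c)$ and that it lies in $\mathcal{L}_{2s}$ as a function of $x$. This requires care with the conformal covariance of $(-\Delta)^s$ under Kelvin transform (the weight $|x-c|^{2s-n}$ appears) and with checking that the pole of $P_E(\cdot,y)$ at $x=y$ does not spoil the distributional identity on $\Omega$; since $y\notin\overline\Omega$ in the relevant range this is automatic once $y\in B_r(c)$ is fixed, but one must state it cleanly. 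A second, minor technical point is that the maximum principle as stated (Theorem \ref{thm:silvestre-mp}) wants lower semicontinuity of $w$; since $P_\Omega(\cdot,y)$ is continuous on $\mathbb{R}^n\backslash\{y\}$ (being $s$-harmonic in $\Omega$ and vanishing continuously on $\partial\Omega$ by the same $\dist(x,\partial\Omega)^s$ bound used in Theorem \ref{thm:Gf}) and $P_E(\cdot,y)$ is continuous on $\mathbb{R}^n\backslash\{y\}$ as well, $w$ is continuous away from $y$ and bounded below near $y$, so the principle applies after noting $w\to+\infty$ (or stays $\ge 0$) near $y$. Modulo these routine checks, the comparison argument closes the proof.
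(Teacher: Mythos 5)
Your overall strategy is the same as the paper's: identify the explicit Poisson kernel $P_E$ of the exterior ball via Kelvin transform and compare it to $P_\Omega$ by the maximum principle. The paper, however, does the comparison at the level of $P_\Omega\ast\varphi$ versus $P_E\ast\varphi$ for nonnegative test functions $\varphi\in\mathcal{D}(B_r(c))$, and only passes to the pointwise inequality for the kernels at the end. You instead fix $y\in B_r(c)$ and compare the kernels directly.

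There is a genuine error in your intermediate claim. You assert that $(-\Delta)^s P_\Omega(\cdot,y)=0$ in $\mathcal{D}'(\Omega)$ for fixed $y$, but this is false. The function $x\mapsto P_\Omega(x,y)$, extended by zero on $\mathbb{R}^n\backslash\Omega$, equals $C_{n,s}\,G_\Omega\ast h$ with $h(z)=|z-y|^{-n-2s}$, and $h\in C(\overline\Omega)$ since $y\notin\overline\Omega$. Theorem \ref{thm:Gf} then gives $(-\Delta)^s P_\Omega(\cdot,y)=\frac{C_{n,s}}{|\cdot-y|^{n+2s}}>0$ in $\mathcal{D}'(\Omega)$, not zero. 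The object that is genuinely $s$-harmonic in $\Omega$ is $P_\Omega\ast\delta_y$, which differs from $P_\Omega(\cdot,y)\chi_\Omega$ by the singular boundary datum $\delta_y$; dropping that datum and extending by zero changes the fractional Laplacian by exactly the term above. The same remark applies to $P_E(\cdot,y)$ viewed as a function on $\mathbb{R}^n$ vanishing on $\overline{B_r(c)}$: it too has $(-\Delta)^s P_E(\cdot,y)=\frac{C_{n,s}}{|\cdot-y|^{n+2s}}$ in $\mathcal{D}'(E)\supset\mathcal{D}'(\Omega)$. Your conclusion that $w=P_E(\cdot,y)-P_\Omega(\cdot,y)$ satisfies $(-\Delta)^s w=0$ in $\mathcal{D}'(\Omega)$ is therefore still correct, but only because these two nonzero right-hand sides coincide and cancel — not because either term vanishes. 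As written, the reasoning would mislead a reader (and possibly you) into thinking each kernel separately is $s$-harmonic, which it is not; if $P_E$ happened to be $s$-harmonic while $P_\Omega$ were not, the comparison would fail.

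The paper's smeared version avoids this pitfall entirely: $P_\Omega\ast\varphi$ and $P_E\ast\varphi$, with $\varphi\geqslant 0$, $\varphi\in\mathcal{D}(B_r(c))$, are honestly $s$-harmonic in $\Omega$ and $E$ respectively by Theorem \ref{thm:Pg0} (and its exterior-ball analogue), are continuous on $\mathbb{R}^n$ by the same theorem, and agree with $\varphi$ on $B_r(c)$; the maximum principle applies cleanly, and letting $\varphi$ run over all such test functions recovers the pointwise kernel inequality. If you keep your pointwise route, you should replace the incorrect ``$(-\Delta)^s P_\Omega(\cdot,y)=0$'' by the correct identity $(-\Delta)^s P_\Omega(\cdot,y)=(-\Delta)^s P_E(\cdot,y)=\frac{C_{n,s}}{|\cdot-y|^{n+2s}}$ in $\mathcal{D}'(\Omega)$ and then observe the cancellation; you will also want to invoke Lemma \ref{lem:estP2} to guarantee $P_\Omega(\cdot,y)\to0$ at $\partial\Omega$ so that $w$ is continuous and the maximum principle applies.
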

\begin{proof}
A Kelvin transform argument instantly shows that
\begin{equation}
  P_{\mathbb{R}^n\backslash \overline{B_r(c)}}(x,y)=c(n,s)\left(\frac{|x-c|^2-r^2}{r^2-|y-c|^2}\right)^s\frac{1}{|x-y|^n}
\end{equation}
performs as the Poisson kernel of $\mathbb{R}^n\backslash \overline{B_r(c)}$.

Then, for any non-negative function $\varphi\in \mathcal{D}(\mathbb{R}^n)$ with $\operatorname{supp}(\varphi)\subset\subset B_r(c)$, one has:
\begin{equation}
\begin{cases}
  (-\Delta)^s P_\Omega\ast\varphi=0=(-\Delta)^s P_{\mathbb{R}^n\backslash \overline{B_r(c)}}\ast\varphi
  &\qquad\text{in}\quad \mathcal{D}^\prime(\Omega),\\
  P_\Omega\ast\varphi=\varphi=P_{\mathbb{R}^n\backslash \overline{B_r(c)}}\ast\varphi
  &\qquad\text{in}\quad B_r(c),\\
  P_\Omega\ast\varphi=0\leqslant P_{\mathbb{R}^n\backslash \overline{B_r(c)}}\ast\varphi
  &\qquad\text{in}\quad \mathbb{R}^n\backslash (B_r(c)\cup \Omega).
  \end{cases}
\end{equation}
Moreover, seeing Theorem~\ref{thm:Pg0}, we know both $P_\Omega\ast\varphi$ and $P_{\mathbb{R}^n\backslash \overline{B_r(c)}}\ast\varphi$ are continuous.
Then the maximum principle \ref{thm:silvestre-mp} gives that, for any given $x\in\Omega$,
\begin{equation}
   \int_{B_r(c)}P_\Omega(x,y)\varphi(y)\intd y\leqslant \int_{B_r(c)} P_{\mathbb{R}^n\backslash \overline{B_r(c)}}(x,y)\varphi(y)\intd y.
\end{equation}
This indicates that
\begin{equation}
  P_\Omega(x,y) \leqslant P_{\mathbb{R}^n\backslash \overline{B_r(c)}}(x,y)\qquad\text{for\ any}\quad x\in\Omega\quad\text{and}\quad y\in B_r(c).
\end{equation}
\end{proof}
\begin{lemma}
\label{lem:estP1}
 For $x\in\Omega$, $y\in\Omega^r\backslash\overline{\Omega}$, it holds that
   \begin{equation}
    P_\Omega(x,y)\leqslant \frac{C(n,s,r, \diam\Omega) }{\dist(y,\partial\Omega)^s|x-y|^{n-s}}.
  \end{equation}
\end{lemma}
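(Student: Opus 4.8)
The plan is to reduce the estimate for $y$ in the shell $\Omega^r\setminus\overline{\Omega}$ to the barrier estimate of Lemma~\ref{lem:barrierP}, which is only directly available when $y$ lies inside an exterior ball $B_r(c)$ of $\Omega$. First I would fix $x\in\Omega$ and $y\in\Omega^r\setminus\overline{\Omega}$ and pick a nearest boundary point $Y\in\partial\Omega$ with $|y-Y|=\dist(y,\partial\Omega)=:d<r$. Let $B_r(c)$ be the exterior ball of $\Omega$ at $Y$; since $\Omega$ satisfies the \emph{uniform} exterior ball condition with radius $r$, this ball exists, and because $y$ lies on the outward normal ray from $Y$ at distance $d<r$, one checks $y\in\overline{B_r(c)}$ (indeed $|y-c|=r-d<r$, so $y$ is in the open ball unless $d=0$, which is excluded). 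Thus Lemma~\ref{lem:barrierP} applies and gives
\begin{equation}
  P_\Omega(x,y)\leqslant c(n,s)\left(\frac{|x-c|^2-r^2}{r^2-|y-c|^2}\right)^s\frac{1}{|x-y|^n}.
\end{equation}

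Next I would estimate the two factors separately. For the denominator, $r^2-|y-c|^2=(r-|y-c|)(r+|y-c|)=d\,(2r-d)\geqslant r\,d = r\,\dist(y,\partial\Omega)$, using $d<r$. For the numerator, $|x-c|^2-r^2=(|x-c|-r)(|x-c|+r)$; here $|x-c|-r=\dist(x,\partial B_r(c))\leqslant \dist(x,\partial\Omega)\cdot(\text{const})$ is not quite immediate, so instead I would bound crudely: $|x-c|\leqslant |x-y|+|y-c|\leqslant |x-y|+r$, hence $|x-c|-r\leqslant |x-y|$, while $|x-c|+r\leqslant |x-y|+2r\leqslant C(r,\diam\Omega)\,(1+|x-y|)$; combined with the fact that $|x-y|\leqslant \diam\Omega + r$ is bounded, this gives $|x-c|^2-r^2\leqslant C(r,\diam\Omega)\,|x-y|$. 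Plugging both bounds into the barrier estimate yields
\begin{equation}
  P_\Omega(x,y)\leqslant c(n,s)\,\frac{\big(C(r,\diam\Omega)|x-y|\big)^s}{\big(r\,\dist(y,\partial\Omega)\big)^s}\,\frac{1}{|x-y|^n}
  = \frac{C(n,s,r,\diam\Omega)}{\dist(y,\partial\Omega)^s\,|x-y|^{n-s}},
\end{equation}
which is the claimed inequality.

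The only genuinely delicate point is verifying $y\in\overline{B_r(c)}$ for the relevant exterior ball: one must be sure that the ball realizing the exterior ball condition at the nearest boundary point $Y$ is positioned so that the segment from $Y$ to $y$ points into that ball. This follows because $Y$ being the nearest point of $\partial\Omega$ to $y$ forces $y-Y$ to be the outward normal direction, and the exterior ball at $Y$ is, by definition, tangent to $\partial\Omega$ at $Y$ from outside, so its center $c$ lies on that same outward normal ray at distance $r$ from $Y$; hence $y$ sits between $Y$ and $c$ (as $d<r$) and therefore inside the ball. Everything else is the routine two-factor estimate above, with all constants depending only on $n,s,r$ and $\diam\Omega$. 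The restriction $y\in\Omega^r\setminus\overline\Omega$ (rather than the full exterior) is exactly what guarantees $d<r$ so that the denominator bound $r^2-|y-c|^2\geqslant r\,d$ does not degenerate.
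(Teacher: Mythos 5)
Your proof is correct and follows essentially the same route as the paper: take the nearest boundary point $Y$ to $y$, invoke Lemma~\ref{lem:barrierP} with the exterior ball $B_r(c)$ at $Y$, then bound the two factors by $|x-c|^2-r^2\leqslant(2r+\diam\Omega)|x-y|$ and $r^2-|y-c|^2\geqslant r\dist(y,\partial\Omega)$. The only place your write-up is a bit looser than necessary is the justification that $y\in B_r(c)$: your argument appeals to a well-defined outward normal at $Y$ and to the exterior ball being centered on it, neither of which is guaranteed by the uniform exterior ball condition alone (a nonsmooth boundary can have several admissible exterior balls at a corner, with centers in different directions); the paper simply asserts this inclusion as clear, so you are no worse off, but your ``justification'' should not be mistaken for a proof of that point. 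The final two-factor estimate matches the paper's line by line.
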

\begin{proof}
  Let $X\in\partial\Omega$ such that $|y-X|=\dist(y,\partial\Omega)$, let $B_r(c)$ be the exterior ball of $\Omega$ at $X$. Then clearly we have $y\in B_r(c)$, and hence Lemma \ref{lem:barrierP} shows that
  \begin{equation}
    P_\Omega(x,y)\leqslant c(n,s)\left(\frac{|x-c|^2-r^2}{r^2-|y-c|^2}\right)^s\frac{1}{|x-y|^n}.
  \end{equation}
  Here
  \begin{equation}
    r^2-|y-c|^2\geqslant r(r-|y-c|)=r|y-X|=r\dist(y,\partial\Omega).
  \end{equation}
   \begin{equation}
    |x-c|^2-r^2\leqslant (2r+\diam(\Omega))(|x-c|-r)\leqslant (2r+\diam(\Omega))|x-y|.
  \end{equation}
  Then  we have
  \begin{equation}
    P_\Omega(x,y)\leqslant C\frac{1}{\dist(y,\partial\Omega)^s}\frac{1}{|x-y|^{n-s}}.
  \end{equation}
\end{proof}
\begin{lemma}
\label{lem:estP2}
  For $x\in\Omega$, $y\in\mathbb{R}^n\backslash\Omega$, it holds that
  \begin{equation}
    P_\Omega(x,y)\leqslant C(n,s,\Omega) \frac{\dist(x,\partial\Omega)^s}{\dist(y,\partial\Omega)^{2s}|x-y|^{n-s}}.
  \end{equation}
\end{lemma}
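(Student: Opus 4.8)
The plan is to bound $P_\Omega(x,y)$ directly from its defining integral $P_\Omega(x,y)=C_{n,s}\int_\Omega G_\Omega(x,z)|z-y|^{-n-2s}\intd z$, feeding in the pointwise estimate $G_\Omega(x,z)\leqslant C\dist(x,\partial\Omega)^s|x-z|^{-(n-s)}$ of Proposition~\ref{prop:estG}. Write $d_x=\dist(x,\partial\Omega)$, $d_y=\dist(y,\partial\Omega)$, and $R=|x-y|$. Two elementary observations drive the estimate. First, since $x\in\Omega$ and $y\notin\Omega$, for $y\in\mathbb{R}^n\setminus\overline\Omega$ one has $\dist(y,\Omega)=\dist(y,\partial\Omega)=d_y$, so every $z\in\Omega$ satisfies $|z-y|\geqslant d_y$, and in particular $R=|x-y|\geqslant d_y$. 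Second, integrating Proposition~\ref{prop:estG} over the bounded set $\Omega\subset\overline{B_{\diam\Omega}(x)}$ gives $\int_\Omega G_\Omega(x,z)\intd z\leqslant Cd_x^s\int_\Omega|x-z|^{-(n-s)}\intd z\leqslant C(n,s,\diam\Omega)\,d_x^s$.

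I would then split $\Omega=A\cup B$ with $A=\{z\in\Omega:|z-y|<R/2\}$ and $B=\Omega\setminus A$. On $B$ we have $|z-y|\geqslant R/2$, so $\int_B G_\Omega(x,z)|z-y|^{-n-2s}\intd z\leqslant 2^{n+2s}R^{-n-2s}\int_\Omega G_\Omega(x,z)\intd z\leqslant Cd_x^sR^{-n-2s}$; writing $R^{-n-2s}=R^{-(n-s)}R^{-2s}$ and using $R\geqslant d_y$ turns this into $Cd_x^sd_y^{-2s}R^{-(n-s)}$, which is the desired bound. On $A$, the triangle inequality gives $|x-z|\geqslant R-|z-y|>R/2$, hence $G_\Omega(x,z)\leqslant Cd_x^s(R/2)^{-(n-s)}$; pulling this factor out of the integral and combining $A\subset\{z:|z-y|\geqslant d_y\}$ with the computation $\int_{\{|w|\geqslant d_y\}}|w|^{-n-2s}\intd w=Cd_y^{-2s}$ yields again $\int_A G_\Omega(x,z)|z-y|^{-n-2s}\intd z\leqslant Cd_x^sd_y^{-2s}R^{-(n-s)}$. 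Adding the $A$ and $B$ contributions and absorbing $C_{n,s}$ produces the claimed inequality.

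I do not anticipate a genuine obstacle: the argument is a clean cut-off at the scale $R/2$ together with Proposition~\ref{prop:estG}. The only points that need a brief justification are the identity $\dist(y,\Omega)=\dist(y,\partial\Omega)$ for $y\in\mathbb{R}^n\setminus\overline\Omega$ (so that $|z-y|\geqslant d_y$ uniformly in $z\in\Omega$), the degenerate case $y\in\partial\Omega$ (where $d_y=0$, the right-hand side is $+\infty$, and there is nothing to prove), and bookkeeping of the constant's dependence on $n,s,r,\diam\Omega$. It is worth noting that, in contrast to Lemma~\ref{lem:estP1}, this proof needs no separate treatment of $y$ near $\partial\Omega$: the factor $d_x^s$ is supplied entirely by Proposition~\ref{prop:estG}, while the factor $d_y^{-2s}$ comes entirely from the integrability of $|z-y|^{-n-2s}$ away from $y$.
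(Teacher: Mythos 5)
Your proof is correct and follows essentially the same route as the paper's: both start from the defining integral, feed in Proposition~\ref{prop:estG}, split the integration region at the scale $|x-y|/2$, and supply the $d_y^{-2s}$ factor from the lower bound $|z-y|\geqslant\dist(y,\partial\Omega)$ for $z\in\Omega$. The only cosmetic difference is that you cut out a ball centered at $y$ while the paper cuts out a ball centered at $x$ (using $|z-y|\geqslant 2r-|x-z|$ and $\dist(y,\partial\Omega)\leqslant|x-y|$ to reconcile the near-$x$ piece with the target bound), but the estimates in each piece match term for term.
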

\begin{proof}
Let $r=\frac{|x-y|}{2}$
  From Proposition~\ref{prop:estG}, we has
  \begin{equation}
    G_\Omega(x,z)\leqslant C(n,s,\Omega)\frac{\dist(x,\partial\Omega)^s}{|x-z|^{n-s}}.
  \end{equation}
  Therefore
  \begin{equation}
  \begin{aligned}
    P_\Omega(x,y) &=\int_{\Omega}G_\Omega(x,z)\frac{1}{|z-y|^{n+2s}}\intd z\\
    &\leqslant C\int_{\Omega} \frac{\dist(x,\partial\Omega)^s}{|x-z|^{n-s}|z-y|^{n+2s}} \intd z\\
    &=C\Biggl[
    \int_{\Omega\backslash B_r(x)} \frac{\dist(x,\partial\Omega)^s}{|x-z|^{n-s}|z-y|^{n+2s}} \intd z
    +
    \int_{B_r(x)} \frac{\dist(x,\partial\Omega)^s}{|x-z|^{n-s}|z-y|^{n+2s}} \intd z
    \Biggr]\\
    &\leqslant C\Biggl[
    \int_{\Omega\backslash B_r(x)} \frac{\dist(x,\partial\Omega)^s}{r^{n-s}|z-y|^{n+2s}} \intd z+
    \int_{B_r(x)} \frac{\dist(x,\partial\Omega)^s}{|x-z|^{n-s}(2r-|x-z|)^{n+2s}} \intd z
    \Biggr]\\
    &\leqslant C\Biggl[ \frac{\dist(x,\partial\Omega)^s}{r^{n-s}\dist(y,\partial\Omega)^{2s}} +\frac{\dist(x,\partial\Omega)^s}{r^{n+s}}\Biggr]\leqslant C\frac{\dist(x,\partial\Omega)^s}{r^{n-s}\dist(y,\partial\Omega)^{2s}}.
    \end{aligned}
  \end{equation}
\end{proof}
\begin{lemma}
\label{lem:estP3}
  For  $x\in\Omega$, $y\in\mathbb{R}^n\backslash\Omega^r$, one estimates:
  \begin{equation}
    P_\Omega(x,y)\leqslant C(n,s,\Omega)\frac{\dist(x,\partial\Omega)^s}{|x-y|^{n+2s}}.
  \end{equation}
\end{lemma}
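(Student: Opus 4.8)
The plan is to estimate $P_\Omega(x,y)$ directly from its defining integral, exploiting that $y$ lies far from $\Omega$ to replace $|z-y|$ by a constant multiple of $|x-y|$, and then to invoke the boundary decay estimate for $G_\Omega$ from Proposition~\ref{prop:estG}.

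First, since $y\in\mathbb{R}^n\backslash\Omega^r$ we have $\dist(y,\Omega)\geqslant r$, so every $z\in\Omega$ satisfies $|z-y|\geqslant r$; on the other hand $|x-y|\leqslant|x-z|+|z-y|\leqslant\diam(\Omega)+|z-y|\leqslant\big(1+\diam(\Omega)/r\big)|z-y|$. Hence
\begin{equation}
  \frac{1}{|z-y|^{n+2s}}\leqslant \frac{C(n,s,r,\diam\Omega)}{|x-y|^{n+2s}}\qquad\text{uniformly in }z\in\Omega,
\end{equation}
and substituting this into
\begin{equation}
  P_\Omega(x,y)=C_{n,s}\int_\Omega\frac{G_\Omega(x,z)}{|z-y|^{n+2s}}\intd z
\end{equation}
reduces the claim to proving the pointwise-in-$x$ bound $\int_\Omega G_\Omega(x,z)\intd z\leqslant C\dist(x,\partial\Omega)^s$.

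Second, this bound follows from Proposition~\ref{prop:estG}: since $G_\Omega(x,z)\leqslant C(n,s,r,\diam\Omega)\,\dist(x,\partial\Omega)^s|x-z|^{s-n}$ and $\Omega\subset B_{\diam\Omega}(x)$, we get
\begin{equation}
  \int_\Omega G_\Omega(x,z)\intd z\leqslant C\,\dist(x,\partial\Omega)^s\int_{B_{\diam\Omega}(x)}|x-z|^{s-n}\intd z=C(n,s,\diam\Omega)\,\dist(x,\partial\Omega)^s,
\end{equation}
the integral being finite because $s-n>-n$. Combining the two steps yields
\begin{equation}
  P_\Omega(x,y)\leqslant C(n,s,r,\diam\Omega)\,\frac{\dist(x,\partial\Omega)^s}{|x-y|^{n+2s}}=C(n,s,\Omega)\,\frac{\dist(x,\partial\Omega)^s}{|x-y|^{n+2s}}.
\end{equation}

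There is no serious obstacle here; the only point to watch is that the constant in the far-field comparison depends on $r$ and $\diam(\Omega)$, which is consistent with the stated dependence on $\Omega$ (recall $r$ is the fixed uniform exterior-ball radius of $\Omega$). Note that bounding the $z$-integral by the cruder inequality $0\leqslant G_\Omega(x,z)\leqslant\Phi(x-z)$ would lose the crucial factor $\dist(x,\partial\Omega)^s$, so Proposition~\ref{prop:estG} is genuinely needed.
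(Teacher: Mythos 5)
Your proposal is correct and takes essentially the same approach as the paper: both establish the uniform comparison $|z-y|\gtrsim|x-y|$ for $z\in\Omega$ (using that $y$ is at distance at least $r$ from $\Omega$), then substitute the boundary decay bound for $G_\Omega$ from Proposition~\ref{prop:estG} and integrate $|x-z|^{s-n}$ over a ball of radius $\diam(\Omega)$. The paper routes the far-field comparison through the intermediate quantity $\dist(y,\partial\Omega)$ rather than your direct triangle-inequality computation, but that is an inessential variant.
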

\begin{proof}
  For $x\in\Omega$ and $y\in\mathbb{R}^n\backslash \Omega$ with $\dist(y,\partial\Omega)>r$, one can choose $X\in\partial\Omega$ such that $\dist(y,\partial\Omega)=|y-X|$, then
  \begin{equation}
   |y-x|\leqslant |y-X|+|X-x|\leqslant \left(1+\frac{\diam(\Omega)}{r}\right)|y-X| =\left(1+\frac{\diam(\Omega)}{r}\right)\dist(y,\partial\Omega).
  \end{equation}
  and also $|y-z|\geqslant\dist(y,\partial\Omega)$ for any $z\in\Omega$.
  Therefore
  \begin{equation}
  \begin{aligned}
    P_\Omega(x,y)&=C_{n,s}\int_\Omega \frac{G_\Omega(x,z)}{|z-y|^{n+2s}}\intd z\\
    &\leqslant C(n,s,\Omega)\int_\Omega \frac{1}{\dist(y,\partial\Omega)^{n+2s}} \frac{\dist(x,\partial\Omega)^s}{|x-z|^{n-2s}}\intd z\\
    &\leqslant \frac{\dist(x,\partial\Omega)^s}{|x-y|^{n+2s}} \int_{B_{\diam(\Omega)}(x)}\frac{C(n,s,\Omega)}{|x-z|^{n-2s}}\intd z\\
    &=C(n,s,\Omega)\frac{\dist(x,\partial\Omega)^s}{|x-y|^{n+2s}}.
    \end{aligned}
  \end{equation}

\end{proof}

\begin{lemma}
\label{lem:doubleball}
  Let $R\geqslant1$, $c$ be a point in $\mathbb{R}^n$ with $|c|=1+R$, $v\in L^\infty(\mathbb{R}^n)$ satisfies:
  \begin{equation}
  \begin{cases}
    (-\Delta)^s v=0&\qquad\text{in}\quad B_1=B_1(0),\\
    v=0&\qquad\text{on}\quad B_R(c).
    \end{cases}
  \end{equation}
  Then for a universal positive constant $\eta_0<1$,
  \begin{equation}
   |v(0)|\leqslant \eta_0\|v\|_{L^\infty(\mathbb{R}^n)}.
  \end{equation}
\end{lemma}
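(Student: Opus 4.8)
The plan is to represent $v(0)$ through the Poisson kernel of the unit ball and then observe that the excluded ball $B_R(c)$ always captures a fixed, direction‑ and $R$‑independent fraction of the Poisson mass at the origin. \textbf{Step 1 (Poisson representation).} Normalize $\|v\|_{L^\infty(\mathbb{R}^n)}=1$. Since $|c|=1+R$, the point of $\overline{B_R(c)}$ nearest the origin lies at distance $(1+R)-R=1$, so $\overline{B_R(c)}$ is externally tangent to $\overline{B_1}$; in particular $B_R(c)\subset\mathbb{R}^n\backslash B_1$ and $v=0$ on $B_R(c)$. As $v$ is bounded and $(-\Delta)^s v=0$ in $B_1$, standard interior regularity for $s$‑harmonic functions lets us take $v$ continuous in $B_1$, and I claim
\[
  v(0)=\int_{\mathbb{R}^n\backslash B_1}P(0,y)\,v(y)\intd y ,
\]
with $P$ the Poisson kernel \eqref{P}. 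Indeed, the Poisson integral of the exterior values of $v$, extended by $v$ outside $B_1$, is a bounded function that is $s$‑harmonic in $B_1$ and shares the exterior data of $v$, so the difference $w$ is bounded, $s$‑harmonic in $B_1$, and vanishes on $\mathbb{R}^n\backslash B_1$; comparing $w$ with $\pm N(1-|x|^2)_+^{s-1}$ (the explicit $s$‑harmonic function with zero exterior data from the Remark following Theorem~\ref{thm:main1}, which blows up at $\partial B_1$) and testing the sign of $(-\Delta)^s$ at an interior extremum forces $|w|\leqslant N(1-|x|^2)^{s-1}$ on $B_1$ for every $N>0$, hence $w\equiv0$ on letting $N\to0$.

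\textbf{Step 2 (a uniform lower bound on the captured mass).} From \eqref{P}, $P(0,y)=c(n,s)(|y|^2-1)^{-s}|y|^{-n}$ is positive and radially symmetric on $\mathbb{R}^n\backslash\overline{B_1}$, and $\int_{\mathbb{R}^n\backslash B_1}P(0,y)\intd y=1$ (it reproduces constants). Put $y_0=c/|c|\in\partial B_1$. The unit ball $B_1(2y_0)$, externally tangent to $B_1$ at $y_0$, satisfies $B_1(2y_0)\subset\mathbb{R}^n\backslash\overline{B_1}$ and $B_1(2y_0)\subset B_R(c)$: if $|z-2y_0|<1$ then
\[
  |z-c|=|z-2y_0+(1-R)y_0|\leqslant|z-2y_0|+(R-1)<1+(R-1)=R .
\]
Hence $\int_{B_R(c)}P(0,y)\intd y\geqslant\int_{B_1(2y_0)}P(0,y)\intd y=:c_0$, and by rotational invariance of $P(0,\cdot)$ the value $c_0$ depends only on $n,s$, not on $c$ or $R$; moreover $0<c_0<1$ since $B_1(2y_0)$ has positive measure and is a proper (bounded) subset of $\mathbb{R}^n\backslash\overline{B_1}$.

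\textbf{Step 3 (conclusion).} Combining Steps 1 and 2 and using $|v|\leqslant1$ with $v=0$ on $B_R(c)\subset\mathbb{R}^n\backslash B_1$,
\[
  |v(0)|\leqslant\int_{\mathbb{R}^n\backslash B_1}P(0,y)\,|v(y)|\intd y\leqslant\int_{(\mathbb{R}^n\backslash B_1)\backslash B_R(c)}P(0,y)\intd y=1-\int_{B_R(c)}P(0,y)\intd y\leqslant 1-c_0 .
\]
Thus $\eta_0:=1-c_0\in(0,1)$ works, and it depends only on $n$ and $s$.

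\textbf{Main obstacle.} The geometry is elementary and the uniformity over $R\geqslant1$ is precisely the containment $B_1(2y_0)\subset B_R(c)$; the only step requiring care is the Poisson representation in Step 1 for merely bounded — not continuous — exterior data, which is exactly why the hypothesis $v\in L^\infty(\mathbb{R}^n)$ (and not just $v\in\mathcal{L}_{2s}$) is essential, the unbounded solutions of the type $(1-|x|^2)^{s-1}$ being the obstruction to uniqueness.
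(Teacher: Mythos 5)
Your proof is correct and follows essentially the same approach as the paper's: both represent $v(0)$ via the Poisson integral over $\mathbb{R}^n\backslash B_1$ (after reducing the exterior data to bounded, using uniqueness among bounded solutions) and observe that $B_R(c)$ always contains the unit ball externally tangent to $B_1$ at $c/|c|$, hence captures a universal, $R$- and $c$-independent fraction $c_0$ of the Poisson mass $\int_{\mathbb{R}^n\backslash B_1}P(0,y)\intd y=1$. Your Step~1 in fact spells out the bounded-uniqueness argument that the paper only gestures at with an unfilled ``Theorem indicates'' reference.
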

\begin{proof}
We only need to prove for $R=1$ and $|c|=2$.
  Let $w=P\ast v$, as defined in \eqref{Pg}, then
  \begin{equation}
  \begin{cases}
    (-\Delta)^s w=0=(-\Delta)^s v&\qquad\text{in}\quad B_1,\\
    w=v&\qquad\text{on}\quad \mathbb{R}^n\backslash B_1.
    \end{cases}
  \end{equation}
  Moreover, both $w$ and $v$ are bounded. Hence, Theorem indicates that
  \begin{equation}
    w\equiv v.
  \end{equation}
  \begin{equation}
  \begin{aligned}
    |v(0)|&=|P\ast v(0)|\\
    &\leqslant \int_{\mathbb{R}^n\backslash B_1} P(0,y)|v(y)|\intd y\\
    &\leqslant \int_{\mathbb{R}^n\backslash (B_1\cup B_1(c))} P(0,y)\intd y \|v\|_{L^\infty(\mathbb{R}^n)}\\
    &=\underbrace{\left(1-\int_{B_1(c)} P(0,y)\intd y\right)}_{\eta_0}\|v\|_{L^\infty(\mathbb{R}^n)}.
    \end{aligned}
  \end{equation}
\end{proof}
\begin{proposition}
  For any $x\in\Omega$, the following identity holds:
  \begin{equation}
    \int_{\mathbb{R}^n\backslash\Omega}P_\Omega(x,y)\intd y=1.
  \end{equation}
\end{proposition}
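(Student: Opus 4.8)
The plan is to transfer the identity, by Fubini--Tonelli, to a duality statement for the Green function, and then to recognise the resulting integral as the value at $x$ of the (distributional) solution of $(-\Delta)^su=(-\Delta)^s\chi_\Omega$ with zero exterior data, which is $\chi_\Omega$ itself. Concretely, since $G_\Omega\geqslant 0$, Tonelli's theorem applied to the definition of $P_\Omega$ gives
\begin{equation*}
  \int_{\mathbb{R}^n\backslash\Omega}P_\Omega(x,y)\intd y = C_{n,s}\int_{\Omega}G_\Omega(x,z)\left(\int_{\mathbb{R}^n\backslash\Omega}\frac{\intd y}{|z-y|^{n+2s}}\right)\intd z =: \int_{\Omega}G_\Omega(x,z)\,\phi(z)\intd z .
\end{equation*}
Two facts about $\phi$ are relevant. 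First, for $z\in\Omega$ one has $\phi(z)=(-\Delta)^s\chi_\Omega(z)$, the principal value being superfluous because $\chi_\Omega\equiv1$ near $z$. Second, $0\leqslant\phi(z)\leqslant C\dist(z,\partial\Omega)^{-2s}$. Since Proposition~\ref{prop:estG} and Remark~\ref{rmk7} give $G_\Omega(x,z)\leqslant C_x\,\dist(z,\partial\Omega)^{s}|x-z|^{-(n-s)}$, the product $G_\Omega(x,\cdot)\phi$ is dominated by $C_x\dist(\cdot,\partial\Omega)^{-s}|x-\cdot|^{-(n-s)}\in L^1(\Omega)$ (here $s<1$ and $n-s<n$ are used), so the last integral is finite; it remains to show it equals $1$.

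To evaluate it, I approximate $\chi_\Omega$ from inside. For $\epsilon_k=1/k$ choose $\varphi_k=J_{\epsilon_k}[\chi_{\Omega_{2\epsilon_k}}]\in C_0^\infty(\Omega)$, so that $0\leqslant\varphi_k\leqslant1$, $\varphi_k\equiv1$ on $\Omega_{3\epsilon_k}$, $\operatorname{supp}\varphi_k\subset\Omega_{\epsilon_k}$, and $\|D^2\varphi_k\|_{L^\infty}\leqslant C\epsilon_k^{-2}$. Because $\varphi_k\in\mathcal{D}(\Omega)$, the identity $\varphi=G_\Omega\ast[(-\Delta)^s\varphi]$ established in the proof of Lemma~\ref{lem:invdelta} yields
\begin{equation*}
  \varphi_k(x)=\int_{\Omega}G_\Omega(x,z)\,(-\Delta)^s\varphi_k(z)\intd z ,
\end{equation*}
and the left-hand side equals $1$ as soon as $k>1/\dist(x,\partial\Omega)$. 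Now let $k\to\infty$. Fix $z\in\Omega$; for $k$ large $\varphi_k\equiv1$ near $z$, hence $(-\Delta)^s\varphi_k(z)=C_{n,s}\int_{\mathbb{R}^n\backslash\Omega_{3\epsilon_k}}(1-\varphi_k(y))|z-y|^{-n-2s}\intd y$, and since this integrand is supported outside a fixed ball centred at $z$ and $1-\varphi_k\to\chi_{\mathbb{R}^n\backslash\Omega}$ pointwise a.e., dominated convergence gives $(-\Delta)^s\varphi_k(z)\to\phi(z)$. Moreover, on $A_k:=\Omega_{6\epsilon_k}$ one gets the $k$-uniform bound $|(-\Delta)^s\varphi_k(z)|\leqslant C\dist(z,\partial\Omega)^{-2s}$, because for $z\in A_k$ the support of $1-\varphi_k$ lies at distance $\geqslant\tfrac12\dist(z,\partial\Omega)$ from $z$. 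Hence on $A_k$ the integrand $G_\Omega(x,z)(-\Delta)^s\varphi_k(z)$ is dominated, independently of $k$, by the $L^1(\Omega)$ function from the first paragraph, and dominated convergence gives $\int_{A_k}G_\Omega(x,z)(-\Delta)^s\varphi_k(z)\intd z\to\int_\Omega G_\Omega(x,z)\phi(z)\intd z$.

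The only remaining, and main, difficulty is the thin collar $\Omega\backslash A_k$, on which $(-\Delta)^s\varphi_k$ can be as large as $\epsilon_k^{-2s}$ by the standard interpolation estimate $\|(-\Delta)^s\psi\|_{L^\infty}\leqslant C\|\psi\|_{L^\infty}^{1-s}\|D^2\psi\|_{L^\infty}^{s}$. This is offset by the boundary decay of the Green function: for $k$ large $\Omega\backslash A_k$ is disjoint from a fixed neighbourhood of $x$, so Proposition~\ref{prop:estG} gives $G_\Omega(x,z)\leqslant C_x\dist(z,\partial\Omega)^{s}$ there, while the uniform exterior ball condition yields the collar bound $|\{z\in\Omega:\dist(z,\partial\Omega)<t\}|\leqslant C_\Omega t$ for small $t$; therefore
\begin{equation*}
  \left|\int_{\Omega\backslash A_k}G_\Omega(x,z)(-\Delta)^s\varphi_k(z)\intd z\right|\leqslant C\epsilon_k^{-2s}\int_{\Omega\backslash A_k}\dist(z,\partial\Omega)^{s}\intd z\leqslant C\epsilon_k^{-2s}\cdot\epsilon_k^{1+s}=C\epsilon_k^{1-s}\longrightarrow 0 .
\end{equation*}
Combining the two regions, $\int_\Omega G_\Omega(x,z)\phi(z)\intd z=\lim_{k\to\infty}\varphi_k(x)=1$, and with the first display this proves $\int_{\mathbb{R}^n\backslash\Omega}P_\Omega(x,y)\intd y=1$. (One can obtain the inequality ``$\leqslant1$'' more cheaply by comparing $P_\Omega\ast g_k$ with the constant $s$-harmonic function $1$ via the maximum principle~\ref{thm:silvestre-mp} for cutoffs $g_k\uparrow\chi_{\mathbb{R}^n\backslash\overline\Omega}$; but ruling out loss of mass near $\partial\Omega$ or at infinity for the reverse inequality seems to need the quantitative Green-function estimates above in any case.)
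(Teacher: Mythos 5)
Your proof is correct, but it takes a genuinely different route from the paper. The paper forms $u=P_\Omega\ast\chi_{\mathbb{R}^n\setminus\overline\Omega}$ by monotone approximation with smooth cutoffs, deduces that $v=1-u$ is a bounded $s$-harmonic function in $\Omega$ vanishing on the complement, and then annihilates $v$ with a purely geometric contraction: Lemma~\ref{lem:doubleball} (the ``double-ball'' lemma) gives $v(x)\leqslant\eta_0\|v\|_{L^\infty}$ with a universal $\eta_0<1$ whenever $\dist(x,\partial\Omega)\leqslant r$, and the maximum principle~\ref{thm:silvestre-mp} propagates this into $\Omega_r$, yielding $\|v\|_{L^\infty}\leqslant\eta_0\|v\|_{L^\infty}$ and hence $v\equiv0$. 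You instead invoke Tonelli to rewrite the mass of $P_\Omega(x,\cdot)$ as $\int_\Omega G_\Omega(x,z)(-\Delta)^s\chi_\Omega(z)\intd z$ and evaluate it by passing to the limit in the Green representation $\varphi_k(x)=\int_\Omega G_\Omega(x,z)(-\Delta)^s\varphi_k(z)\intd z$ (from the proof of Lemma~\ref{lem:invdelta}) with cutoffs $\varphi_k\to\chi_\Omega$; the limit requires the interior/collar splitting and the interpolation estimate $\|(-\Delta)^s\psi\|_{L^\infty}\leqslant C\|\psi\|_{L^\infty}^{1-s}\|D^2\psi\|_{L^\infty}^s$. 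The paper's route buys you freedom from any quantitative collar or surface estimates, at the cost of the extra barrier input (Lemma~\ref{lem:doubleball}); your route is more computational but makes the mechanism transparent, namely that the Poisson-kernel mass is precisely the Green-regularisation of $(-\Delta)^s\chi_\Omega$, and it is self-contained given Proposition~\ref{prop:estG} and Lemma~\ref{lem:invdelta}. One caveat worth flagging: the collar volume bound $|\{z\in\Omega:\dist(z,\partial\Omega)<t\}|\leqslant C_\Omega t$ is not established in the paper (a comparable surface-measure fact is used tacitly in the proof of Theorem~\ref{thm:exis}); if you adopt this argument you should derive it from the uniform exterior ball condition, or at least record it as a lemma.
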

\begin{proof}
Let
\begin{equation}
  u(x)=P_\Omega\ast\chi_{\mathbb{R}^n\backslash\overline{\Omega}}=
  \begin{cases}
    \displaystyle\int_{\mathbb{R}^n\backslash\Omega}P_\Omega(x,y)\intd y&\qquad\text{for}\quad x\in\Omega,\\
    \displaystyle1&\qquad\text{for}\quad x\in\mathbb{R}^n\backslash\Omega.
  \end{cases}
\end{equation}
It suffices to show that $u\equiv 1$.

Indeed, one constructs
  \begin{equation}
    g_k(x)=\int_{\dist(y,\overline{\Omega})\geqslant1/k}j_{1/k}(x-y)\intd y,
  \end{equation}
  where $j_\epsilon$ is the mollifier.
  Then each $g_k\in C^\infty(\mathbb{R}^n)\cap\mathcal{L}_{2s}$, $g_k=0$ in $\Omega$ and
  \begin{equation}
    g_k\nearrow \chi_{\mathbb{R}^n\backslash\overline{\Omega}}\qquad\text{as}\quad k\rightarrow \infty.
  \end{equation}
  The monotone convergence theorem immediately shows that
  \begin{equation}
    u=\lim_{k\rightarrow\infty}P_\Omega\ast g_k.
  \end{equation}
  Theorem~\ref{thm:Pg0} implies that
  \begin{equation}
    (-\Delta)^s[P_\Omega\ast g_k]=0\qquad\text{in}\quad\mathcal{D}^\prime(\Omega).
      \end{equation}
  then the maximum principle~\ref{thm:silvestre-mp} also gives:
  \begin{equation}
    0\leqslant P_\Omega\ast g_k\leqslant 1.
  \end{equation}
  As the point-wise limit of $P_\Omega\ast g_k$, $u$ satisfies:
    \begin{equation}
    (-\Delta)^su=0\qquad\text{in}\quad\mathcal{D}^\prime(\Omega), \qquad\text{and}\qquad    0\leqslant u\leqslant 1.
  \end{equation}
  Consider $v=1-u$, we know:
  \begin{equation}
   0\leqslant v\leqslant 1,  \qquad (-\Delta)^sv=0\quad\text{in}\quad\mathcal{D}^\prime(\Omega) \qquad\text{and}\qquad v=0\quad\text{on}\quad\mathbb{R}^n\backslash\Omega.
  \end{equation}
  As a consequence, $v\in C^\infty(\Omega)$.
  We now prove $v=0$.

  For any $x\in\Omega$ with $\dist(x,\partial\Omega)\leqslant r$, we may say
  $X\in\overline{B_{\dist(x,\partial\Omega)}(x)}\cap\partial\Omega$. Let $B_r(c)$ be the exterior ball at $X$, then we have:
  \begin{equation}
    (-\Delta)^sv=0\quad\text{in}\quad \mathcal{D}^\prime(B_{\dist(x,\partial\Omega)}(x)) \qquad\text{and}\qquad v=0\quad\text{on}\quad \overline{B_r(c)}.
  \end{equation}
  Doing a translation and scaling, one concludes from Lemma~\ref{lem:doubleball} that
  \begin{equation}
    v(x)\leqslant \eta_0 \|v\|_{L^\infty(\mathbb{R}^n)}=\eta_0 \|v\|_{L^\infty(\Omega)}.
  \end{equation}
  Hence, $v\leqslant \eta_0 \|v\|_{L^\infty(\Omega)}$ on $\mathbb{R}^n\backslash \Omega_r$.
  Since $v$ is continuous in $\Omega\supset \overline{\Omega_r}$, one applies maximum principle~\ref{thm:silvestre-mp} to $v$ on $\Omega_r$ to derive
  \begin{equation}
    v(x)\leqslant \eta_0\|v\|_{L^\infty(\Omega)}\qquad\text{for}\quad x\in\Omega_r.
  \end{equation}
  Therefore
  \begin{equation}
    \|v\|_{L^\infty(\Omega)}\leqslant \eta_0\|v\|_{L^\infty(\Omega)}.
  \end{equation}
  Seeing that $\eta_0<1$, we know $v=0$. Hence, $u\equiv 1$.
\end{proof}

\begin{lemma}
\label{lem:contPg}
  For $g\in C(\mathbb{R}^n\backslash\Omega)\cap \mathcal{L}_{2s}$, $P_\Omega\ast g$ is continuous in $\mathbb{R}^n$.
\end{lemma}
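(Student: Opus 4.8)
The plan is to establish continuity separately at points of $\mathbb{R}^n\backslash\overline{\Omega}$, at interior points of $\Omega$, and at boundary points, the last being the substantive case. On $\mathbb{R}^n\backslash\overline{\Omega}$ there is nothing to prove, since there $P_\Omega\ast g\equiv g$ and the set is open.

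For continuity on $\Omega$, I would fix a compact $K\subset\Omega$ and build an integrable majorant, independent of $x\in K$, for $y\mapsto|g(y)|P_\Omega(x,y)$ on $\mathbb{R}^n\backslash\Omega$: on the collar $\Omega^r\backslash\overline{\Omega}$ I use Lemma~\ref{lem:estP1} (there $|x-y|$ is bounded below for $x\in K$, $g$ is bounded because the closure of this collar is a compact subset of $\mathbb{R}^n\backslash\Omega$, and $\dist(y,\partial\Omega)^{-s}$ is integrable over a bounded exterior neighbourhood of $\partial\Omega$ since $s<1$); on $\mathbb{R}^n\backslash\Omega^r$ I use Lemma~\ref{lem:estP3}, whose decay $|x-y|^{-n-2s}$ makes $|g(y)|P_\Omega(x,y)$ integrable because $g\in\mathcal{L}_{2s}$. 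Together with the continuity of $x\mapsto G_\Omega(x,z)$ on $\Omega\backslash\{z\}$ (Corollary~\ref{cor:delta}) — which, after isolating the contribution of a small ball around $x$ (uniformly small because $G_\Omega(x,z)\leqslant\Phi(x-z)$) and applying dominated convergence in $z$, yields continuity of $x\mapsto P_\Omega(x,y)$ — the dominated convergence theorem in $y$ gives continuity of $P_\Omega\ast g$ on $\Omega$.

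The heart of the matter is continuity at a boundary point $X\in\partial\Omega$: one must show $P_\Omega\ast g(x)\to g(X)$ as $\Omega\ni x\to X$. Using the normalization $\int_{\mathbb{R}^n\backslash\Omega}P_\Omega(x,y)\intd y=1$ established above, I write $P_\Omega\ast g(x)-g(X)=\int_{\mathbb{R}^n\backslash\Omega}\big(g(y)-g(X)\big)P_\Omega(x,y)\intd y$ and split the domain into three pieces. On $\{\,|y-X|<\delta\,\}$ the integrand is at most $\varepsilon$ in modulus by continuity of $g$ at $X$, and the mass bound controls it. On $\{\,|y-X|\geqslant\delta\,\}\cap\{\,\dist(y,\partial\Omega)<\rho\,\}$, the far part of the thin exterior collar, I bound $P_\Omega$ by Lemma~\ref{lem:estP1}: once $|x-X|<\delta/2$ we have $|x-y|\gtrsim\delta$, and $g$ is bounded on this bounded set, so this contribution is at most a $\delta$-dependent constant times $\int_{\{\dist(y,\partial\Omega)<\rho\}\backslash\Omega}\dist(y,\partial\Omega)^{-s}\intd y$, which tends to $0$ as $\rho\to0$, uniformly in such $x$. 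On the remaining set $\{\,|y-X|\geqslant\delta\,\}\cap\{\,\dist(y,\partial\Omega)\geqslant\rho\,\}$ I split once more: for $y\in\Omega^r$ I invoke Lemma~\ref{lem:estP2}, whose factor $\dist(x,\partial\Omega)^s$ tends to $0$ while $\dist(y,\partial\Omega)^{-2s}\leqslant\rho^{-2s}$ and $|x-y|^{-(n-s)}\leqslant(\delta/2)^{-(n-s)}$ are harmless and $g$ is bounded; for $y\notin\Omega^r$ I invoke Lemma~\ref{lem:estP3}, again with the vanishing factor $\dist(x,\partial\Omega)^s$ out front and with $\int_{\mathbb{R}^n\backslash\Omega^r}(|g(y)|+|g(X)|)|x-y|^{-n-2s}\intd y$ bounded uniformly for $x$ near $X$ because $g\in\mathcal{L}_{2s}$. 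Hence both remaining pieces tend to $0$ as $x\to X$. Choosing first $\delta$, then $\rho$, and finally $x$ close to $X$ makes the total arbitrarily small.

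The main obstacle is precisely the bookkeeping in the boundary case, where all three Poisson-kernel estimates are genuinely needed in their own regions: Lemma~\ref{lem:estP2} is useless on the collar because $\dist(y,\partial\Omega)^{-2s}$ fails to be integrable once $s\geqslant\tfrac12$; Lemma~\ref{lem:estP1} is useless away from $\partial\Omega$ because it carries no factor vanishing as $\dist(x,\partial\Omega)\to0$; and only the faster decay of Lemma~\ref{lem:estP3} tames the unbounded tail of a general $g\in\mathcal{L}_{2s}$. The single analytic ingredient outside these lemmas is the integrability of $\dist(\cdot,\partial\Omega)^{-s}$ over a bounded exterior neighbourhood of $\partial\Omega$, valid since $s<1$ and since the uniform exterior ball condition makes $\partial\Omega$ regular enough for the standard tubular-neighbourhood estimate.
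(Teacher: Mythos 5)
Your proof is correct and follows essentially the same route as the paper: reduce to boundary continuity via the normalization $\int_{\mathbb{R}^n\backslash\Omega}P_\Omega(x,y)\intd y=1$, isolate a small ball around $X$ using continuity of $g$, and control the remainder with the three Poisson-kernel estimates (Lemmas~\ref{lem:estP1}, \ref{lem:estP2}, \ref{lem:estP3}) in their respective regions. The only cosmetic difference is that the paper disposes of the collar $\Omega^r\backslash(\Omega\cup B_{2\sigma}(X))$ in one stroke by dominated convergence (uniform integrable majorant from Lemma~\ref{lem:estP1}, pointwise decay from Lemma~\ref{lem:estP2}), whereas you introduce an explicit $\rho$-cutoff and reach the same conclusion by hand.
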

\begin{proof}
$P_\Omega\ast g=g$ is continuous in $\mathbb{R}^n\backslash\overline{\Omega}$.
 The continuity of $P_\Omega\ast g$ in $\Omega$ is also clear.
  It suffices to show that $u$ is continuous on $\partial\Omega$.

  Indeed, for any $X\in\partial\Omega$ and $x\in\Omega$,
  \begin{equation}
  \begin{aligned}
    |P_\Omega\ast g(x)-g(X)| &= \left|\int_{\mathbb{R}^n\backslash\Omega}P_\Omega(x,y)g(y)\intd y-g(X)\right|\\
    &= \left|\int_{\mathbb{R}^n\backslash\Omega} P_\Omega(x,y)(g(y)-g(X))\intd y\right|\\
    &\leqslant\int_{\mathbb{R}^n\backslash\Omega} P_\Omega(x,y)|g(y)-g(X)|\intd y
    \end{aligned}
  \end{equation}
  For any $\epsilon>0$, the continuity of $g$ assures that there exists $\sigma>0$ such that
  \begin{equation}
    |g(y)-g(X)|<\frac{\epsilon}{3}\qquad\text{for\ any}\quad y\in B_{2\sigma}(X)\backslash\Omega.
  \end{equation}
Write
\begin{equation}
\begin{aligned}
 &\int_{\mathbb{R}^n\backslash\Omega} P_\Omega(x,y)|g(y)-g(X)|\intd y\\
  =&\left[\int_{B_{2\sigma}(X)\backslash\Omega}
  +\int_{\Omega^r\backslash(\Omega\cup B_{2\sigma}(X))}+\int_{\mathbb{R}^n\backslash\Omega^r}\right]
  P_\Omega(x,y)|g(y)-g(X)|\intd y\\
  =&I_1+I_2+I_3.
  \end{aligned}
\end{equation}
For $I_1$, one instantly estimates:
  \begin{equation}
I_1\leqslant \frac{\epsilon}{3}\int_{B_{2\sigma}(X)\backslash\Omega}P_\Omega(x,y) \intd y<\frac{\epsilon}{3}\int_{\mathbb{R}^n\backslash\Omega}P_\Omega(x,y) \intd y=\frac{\epsilon}{3}.
  \end{equation}
Utilizing Lemma~\ref{lem:estP3}, one estimates:
    \begin{equation}
    \label{eq:Pg=g1}
I_3\leqslant C\int_{\mathbb{R}^n\backslash \Omega^r}\frac{\dist(x,\partial\Omega)^s(1+|g(y)|)}{|x-y|^{n+2s}}\intd y
<C\dist(x,\partial\Omega)^s.
  \end{equation}
  While for $I_2$, we restrict $x\in B_\sigma(X)\cap \Omega$ and estimate in the following way:
  \begin{equation}
    I_2\leqslant C\int_{\Omega^r\backslash( \Omega\cup B_{2\sigma}(X))}P_\Omega(x,y)\intd y.
  \end{equation}
According to Lemma~\ref{lem:estP1}, for $x\in\Omega\cap B_\sigma(X)$ and $y\in\Omega^r\backslash(\overline{\Omega}\cup B_{2\sigma}(X))$,
\begin{equation}
     P_\Omega(x,y)\leqslant \frac{C}{\dist(y,\partial\Omega)^s|x-y|^{n-s}}\leqslant \frac{C}{\sigma^{n-s}}{\dist(y,\partial\Omega)}^{-s}.
\end{equation}
One observes that $\dist(y,\partial\Omega)^{-s}$ is integrable in $\Omega^r\backslash(\overline{\Omega}\cup B_{2\sigma}(X))$.

On the other hand, According to Lemma~\ref{lem:estP2}, $P_\Omega(x,y)\rightarrow 0$ as $\dist(x,\partial\Omega)\rightarrow 0^+$.
Hence, the dominated convergence theorem ensures that
\begin{equation}
\label{eq:Pg=g2}
  I_2\rightarrow 0\qquad\text{as}\quad \dist(x,\partial\Omega)\rightarrow 0^+.
\end{equation}

Combining \eqref{eq:Pg=g1} and \eqref{eq:Pg=g2}, one derives that there is $\delta>0$ such that for any $x\in B_\sigma(X)\cap\Omega$ with $\dist(x,\partial\Omega)<\delta$, we have $I_2<\frac{\epsilon}{3}$ and $I_3<\frac{\epsilon}{3}$, and hence
\begin{equation}
  \int_{\mathbb{R}^n\backslash\Omega} P_\Omega(x,y)|g(y)-g(X)|\intd y
  =I_1+I_2+I_3<\epsilon.
\end{equation}
This shows that $P_\Omega\ast g(x)\rightarrow g(X)$ if $x\rightarrow X$.
\end{proof}
\begin{theorem}
\label{thm:Pg}
  For $g\in C(\mathbb{R}^n\backslash\Omega)\cap \mathcal{L}_{2s}$, problem~\eqref{pb:og} has a unique continuous solution: $u=P_\Omega\ast g$.
\end{theorem}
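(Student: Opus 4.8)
The plan is to prove Theorem~\ref{thm:Pg} by checking three things about $u:=P_\Omega\ast g$: that it is continuous on $\mathbb{R}^n$ and equals $g$ off $\Omega$; that $(-\Delta)^su=0$ in $\mathcal{D}'(\Omega)$; and that it is the only such solution. The first point is immediate: continuity of $u$ on all of $\mathbb{R}^n$ is exactly Lemma~\ref{lem:contPg}, and by the definition of $P_\Omega\ast g$ we have $u=g$ on $\mathbb{R}^n\setminus\Omega$, in particular $u=g$ on $\partial\Omega$. Moreover $u\in\mathcal{L}_{2s}$, since off $\Omega$ it coincides with $g\in\mathcal{L}_{2s}$ and on the bounded set $\overline\Omega$ it is continuous, hence bounded; so $(-\Delta)^su$ is a well-defined distribution on $\Omega$.

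The core step is $(-\Delta)^su=0$ in $\mathcal{D}'(\Omega)$, which I would deduce from Theorem~\ref{thm:Pg0} by approximating $g$. First extend $g$ to $\bar g\in C(\mathbb{R}^n)\cap\mathcal{L}_{2s}$ by the Tietze theorem, arranged so that $\bar g=g$ outside a fixed large ball $B_R$ (so the tail is untouched); mollify to get $\bar g_m:=J_{1/m}[\bar g]\in C^\infty(\mathbb{R}^n)\cap\mathcal{L}_{2s}$ with $\bar g_m\to\bar g$ locally uniformly; and pick smooth cutoffs $\zeta_k$ with $0\le\zeta_k\le1$, $\zeta_k\equiv0$ on $\overline{\Omega^{1/k}}$ and $\zeta_k\equiv1$ on $\mathbb{R}^n\setminus\Omega^{2/k}$. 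Then $g_k:=\zeta_k\,\bar g_{m(k)}$ with $m(k)\to\infty$ lies in $C^\infty(\mathbb{R}^n)\cap\mathcal{L}_{2s}$, vanishes on $\Omega$, and $g_k\to g$ pointwise on $\mathbb{R}^n\setminus\overline\Omega$. By Theorem~\ref{thm:Pg0}, $(-\Delta)^s[P_\Omega\ast g_k]=0$ in $\mathcal{D}'(\Omega)$ for every $k$.

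Next I would pass to the limit. For $\varphi\in\mathcal{D}(\Omega)$ split
\[
\int_{\mathbb{R}^n}(P_\Omega\ast g_k)(x)\,(-\Delta)^s\varphi(x)\intd x=\int_{\Omega}(P_\Omega\ast g_k)\,(-\Delta)^s\varphi\intd x+\int_{\mathbb{R}^n\setminus\Omega}g_k\,(-\Delta)^s\varphi\intd x .
\]
Both integrands converge pointwise ($P_\Omega\ast g_k\to P_\Omega\ast g$ on $\Omega$ by dominated convergence in the $y$-variable, and $g_k\to g$ off $\overline\Omega$), and both admit a $k$-independent integrable majorant: using the kernel bounds of Lemmas~\ref{lem:estP1}--\ref{lem:estP3}, the identity $\int_{\mathbb{R}^n\setminus\Omega}P_\Omega(x,y)\intd y=1$, the decay $|(-\Delta)^s\varphi(x)|\le C\min(1,|x|^{-n-2s})$, and the fact that $J_{1/m}$ is an averaging operator which keeps the $\mathcal{L}_{2s}$-tail $\int_{\mathbb{R}^n\setminus\Omega}|g(z)|(1+|z|^{n+2s})^{-1}\intd z$ under control (so the far-field contributions stay uniformly bounded), one checks that $\sup_k|P_\Omega\ast g_k|\le C$ on $\Omega$ and that $\sup_k|g_k|$ has an envelope $\Psi$ with $\Psi\,|(-\Delta)^s\varphi|\in L^1(\mathbb{R}^n\setminus\Omega)$. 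Dominated convergence then yields $\int_{\mathbb{R}^n}(P_\Omega\ast g)\,(-\Delta)^s\varphi\intd x=0$, i.e.\ $(-\Delta)^su=0$ in $\mathcal{D}'(\Omega)$.

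Finally, for uniqueness, if $u_1,u_2$ are continuous on $\mathbb{R}^n$ with $u_i=g$ on $\mathbb{R}^n\setminus\Omega$ and $(-\Delta)^su_i=0$ in $\mathcal{D}'(\Omega)$, then $w:=u_1-u_2\in C(\mathbb{R}^n)\cap\mathcal{L}_{2s}$ satisfies $(-\Delta)^sw=0$ in $\mathcal{D}'(\Omega)$ and $w=0$ on $\mathbb{R}^n\setminus\Omega$; applying the maximum principle (Theorem~\ref{thm:silvestre-mp}) to $w$ and to $-w$ forces $w\equiv0$. I expect the delicate point of the whole argument to be the construction in the second and third paragraphs: one must make the approximants $g_k$ vanish on $\Omega$ (so that Theorem~\ref{thm:Pg0} applies) while still recovering $g$ a.e.\ off $\Omega$ in the limit, and one must produce a single majorant that is simultaneously integrable against $P_\Omega(x,\cdot)\intd y$ over $\Omega$ and against $|(-\Delta)^s\varphi|$ over $\mathbb{R}^n\setminus\Omega$ — this is exactly what the Section~\ref{sec.5} estimates and the total-mass identity for $P_\Omega$ are there to supply.
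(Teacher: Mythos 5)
Your proof follows exactly the route the paper takes (continuity from Lemma~\ref{lem:contPg}, $s$-harmonicity via approximation by smooth data vanishing on $\Omega$ so that Theorem~\ref{thm:Pg0} applies, uniqueness from the maximum principle~\ref{thm:silvestre-mp}); the paper merely states ``an approximation method instantly shows'' where you spell the approximation out. One technical caution in your second--third paragraphs: the claim that $\sup_k|g_k|$ admits an envelope $\Psi$ with $\Psi\,|(-\Delta)^s\varphi|\in L^1(\mathbb{R}^n\setminus\Omega)$ is not automatic for a general $g\in\mathcal{L}_{2s}$, since the local supremum of an $\mathcal{L}_{2s}$ function need not again lie in $\mathcal{L}_{2s}$. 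The cleaner way to close this step (and the analogous one for $P_\Omega\ast g_k$, via Fubini) is to observe that $g_k\to g$ in the $\mathcal{L}_{2s}$ \emph{norm} $\int_{\mathbb{R}^n\setminus\Omega}|g_k-g|(1+|y|^{n+2s})^{-1}\intd y\to 0$, and that both $(-\Delta)^s\varphi$ and $y\mapsto\int_\Omega P_\Omega(x,y)(-\Delta)^s\varphi(x)\intd x$ are $O\bigl((1+|y|^{n+2s})^{-1}\bigr)$, so the pairings converge without needing a pointwise dominating function. With that adjustment, your argument is correct and coincides in substance with the paper's.
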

\begin{proof}
  An approximation method instantly shows that $u=P_\Omega\ast g$ satisfies:
  \begin{equation}
    (-\Delta)^s u=0\qquad\text{in}\quad\mathcal{D}^\prime(\Omega).
  \end{equation}
  Lemma~\ref{lem:contPg} shows that $u=P_\Omega\ast g$ is continuous.
  The uniqueness comes immediately from the maximum principle~\ref{thm:silvestre-mp}.
\end{proof}

\section{Dirichlet problem}
\label{sec.6}
 In this section, we first establish the existence and uniqueness theorem for the problem
\begin{equation}
  \label{pb:pure}
\left\{
\begin{aligned}
(-\Delta)^s u&=f\quad &\text{in}\ &\Omega,\\
u&=0\quad&\text{on}\ &\mathbb{R}^n\backslash\Omega.
\end{aligned}
\right.
\end{equation}
After that, we extend these results to problem \eqref{pb+c}.

One observes the following lemma, which describes the persistence of exterior ball condition when shifting the domain.
\begin{lemma}
  Let $\Omega$ be a bounded domain in $\mathbb{R}^n$ satisfying the uniform exterior ball condition, with $r$ being the uniform radius of exterior balls. Then one conclude the following:
  \begin{enumerate}
    \item $\Omega_\epsilon$ also satisfies the uniform exterior ball condition with uniform radius $r+\epsilon$.
    \item If $\epsilon<r$ then $\Omega^\epsilon$ satisfies the uniform exterior ball condition with uniform radius $r-\epsilon$.
  \end{enumerate}
\end{lemma}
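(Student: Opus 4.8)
The plan is to prove the two assertions separately; the first is a short consequence of two disjoint balls being externally tangent, while the second carries the real geometric content.

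For (1), fix $X\in\partial\Omega_\epsilon$. One first checks $\dist(X,\mathbb{R}^n\backslash\Omega)=\epsilon$, so that $B_\epsilon(X)\subset\Omega$: indeed $X\in\overline{\Omega_\epsilon}\subset\overline{\Omega}$, $X\notin\Omega_\epsilon$ forces $\dist(X,\partial\Omega)\le\epsilon$ or $X\in\partial\Omega$, and $X\in\partial\Omega$ is impossible since $X=\lim x_k$ with $\dist(x_k,\partial\Omega)>\epsilon$. Pick a nearest point $Y\in\mathbb{R}^n\backslash\Omega$ to $X$; then $Y\in\partial\Omega$ and $|X-Y|=\epsilon$. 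Let $B_r(c)$ be an exterior ball of $\Omega$ at $Y$; since $Y\in\overline{B_r(c)}\cap\partial\Omega$ we must have $|c-Y|=r$, i.e. $Y\in\partial B_r(c)$. Now $B_\epsilon(X)\subset\Omega$ and $B_r(c)\subset\mathbb{R}^n\backslash\Omega$ are disjoint open balls with the common boundary point $Y$, so $|X-c|\le|X-Y|+|Y-c|=\epsilon+r$ while disjointness gives $|X-c|\ge\epsilon+r$; hence $|X-c|=\epsilon+r$ and $Y$ lies on the segment $[X,c]$. It remains to check $B_{r+\epsilon}(c)$ is an exterior ball of $\Omega_\epsilon$ at $X$: clearly $X\in\partial B_{r+\epsilon}(c)\cap\overline{\Omega_\epsilon}$, and if $z\in B_{r+\epsilon}(c)\cap\Omega_\epsilon$ then $z\notin B_r(c)$ (as $B_r(c)\cap\Omega=\emptyset$) and, using $B_r(c)\subset\mathbb{R}^n\backslash\Omega$,
\[
\dist(z,\mathbb{R}^n\backslash\Omega)\le\dist(z,B_r(c))=|z-c|-r<(r+\epsilon)-r=\epsilon ,
\]
contradicting $z\in\Omega_\epsilon$. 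This gives the uniform radius $r+\epsilon$.

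For (2), fix $X\in\partial\Omega^\epsilon$. As above one gets $\dist(X,\Omega)=\epsilon$, hence $B_\epsilon(X)\cap\Omega=\emptyset$, and a nearest point $Y\in\partial\Omega$ with $|X-Y|=\epsilon$; set $\nu=(X-Y)/\epsilon$. The key reduction is the elementary identity $\bigcup_{z\in B_{r-\epsilon}(c)}B_\epsilon(z)=B_r(c)$ (triangle inequality), which yields, for every center $c$,
\[
B_{r-\epsilon}(c)\cap\Omega^\epsilon=\emptyset\iff B_r(c)\cap\Omega=\emptyset,\qquad X\in\overline{B_{r-\epsilon}(c)}\iff|X-c|\le r-\epsilon .
\]
So it suffices to produce an exterior ball $B_r(c)$ of $\Omega$ with $|X-c|\le r-\epsilon$; then $B_{r-\epsilon}(c)$ is the exterior ball of $\Omega^\epsilon$ at $X$ of the required radius. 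A Cauchy--Schwarz computation shows that $|X-c|\le r-\epsilon$ together with $|X-Y|=\epsilon$, $|c-Y|=r$ forces $c-Y$ to be a positive multiple of $X-Y$; thus the task reduces precisely to showing that the exterior ball of $\Omega$ at $Y$ may be chosen with center $c=Y+r\nu$, i.e. that $B_r(Y+r\nu)\cap\Omega=\emptyset$.

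This last claim is the main obstacle, since a priori the exterior ball at $Y$ furnished by the hypothesis need not point in the direction $\nu$. I would establish it by a continuity argument in the radius: put $T=\sup\{t\in[\epsilon,r]:B_t(Y+t\nu)\cap\Omega=\emptyset\}$. The balls $B_t(Y+t\nu)$ are nested increasing in $t$, $t=\epsilon$ lies in the set, and $B_T(Y+T\nu)\cap\Omega=\emptyset$ by passing to the union; moreover $\dist(Y+t\nu,\overline{\Omega})=t$ for all $t\in[\epsilon,T]$, with $Y$ a nearest point. If $T<r$, then $B_t(Y+t\nu)$ meets $\Omega$ for every $t>T$, and a compactness argument on a sequence $t_k\downarrow T$ yields a point $w^*\in\partial\Omega\cap\partial B_T(Y+T\nu)$ at which $B_T(Y+T\nu)$ is itself an exterior ball; one then uses the \emph{uniform} exterior ball condition at $w^*$ (radius $r$) to conclude that the radius-$r$ exterior ball there must extend $B_T(Y+T\nu)$, forcing $T=r$ — heuristically because the admissible exterior-ball directions are rigidly determined at $w^*$ (unique at $C^1$ points, a scaling argument at conical ones). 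Carrying this rigidity step through cleanly — equivalently, proving that if a uniformly-exterior-ball domain admits an exterior ball of radius $\epsilon\le r$ at a boundary point in some direction, it admits one of radius $r$ in the same direction — is the crux; granting it, part (2) follows from the reduction above.
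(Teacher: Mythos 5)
Your proof of part (1) is correct. For part (2) the reduction is also correct and is the real content: the identity $\bigcup_{z\in B_{r-\epsilon}(c)}B_\epsilon(z)=B_r(c)$ shows that an exterior ball of radius $r-\epsilon$ for $\Omega^\epsilon$ at $X$ is exactly a ball $B_r(c)$ disjoint from $\Omega$ with $|X-c|\le r-\epsilon$; and then $Y\in\overline\Omega$ forces $|Y-c|=r$, so equality in the triangle inequality pins down $c=Y+r\nu$. The step you flag as the crux, however, is not merely a gap you could not close: the ``rigidity'' you would need (an exterior ball of small radius at a boundary point in a given direction implies one of radius $r$ in the same direction) is simply false, and with it part (2) of the lemma is false as stated.

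A concrete counterexample: take $\Omega=B_R(0)\setminus\bigl(\overline{B_1((-\delta,0))}\cup\overline{B_1((\delta,0))}\bigr)\subset\mathbb R^2$ with $0<\delta<1$ small and $R$ large. Every boundary point admits a unit exterior ball (one of the two removed disks, or a unit ball outside $B_R$), so the uniform radius is $r=1$. Let $Y=(0,\sqrt{1-\delta^2})$ be the upper intersection point of the two inner circles and $\nu=(0,-1)$. For $0<\epsilon<1-\sqrt{1-\delta^2}$ the ball $B_\epsilon(Y+\epsilon\nu)$ is disjoint from $\Omega$, $Y$ is the unique nearest point of $\overline\Omega$ to $X:=Y+\epsilon\nu$, and hence $X\in\partial\Omega^\epsilon$. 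Your reduction forces $c=Y+\nu=(0,\sqrt{1-\delta^2}-1)$; but $(0,-1)\in\Omega$ with $|(0,-1)-c|=\sqrt{1-\delta^2}<1-\epsilon$, so $B_{1-\epsilon}(c)$ meets $\Omega^\epsilon$ and $\Omega^\epsilon$ has no exterior ball of radius $1-\epsilon$ at $X$. The rigidity fails precisely because $\partial\Omega$ has a corner at $Y$: the admissible exterior-ball directions there form a cone, and only the two extreme directions in that cone support a full radius-$r$ ball, not the bisector direction $\nu$. Part (2) is recoverable under a two-sided (interior and exterior) uniform ball condition, which makes the normal direction at $Y$ unique, but not under the exterior-only hypothesis of the lemma, so no completion of your argument is possible. (For what it is worth, the paper states this lemma without any proof, and the subsequent uniqueness argument seems to use only part (1), applied to $\Omega_\epsilon$, so the defect appears not to propagate.)
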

Hence, we are ready to show the following uniqueness theorem:
\begin{theorem}[Uniqueness]
\label{thm:un}
  Let $\Omega$ be a bounded domain in $\mathbb{R}^n$ satisfying the uniform exterior ball condition, $0<s<1$, $f=0$ and suppose  $u\in \mathcal{L}_{2s}$ is a solution of \eqref{pb:pure} satisfying \eqref{cond0}, then $u=0$ in $\Omega$.
\end{theorem}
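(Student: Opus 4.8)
The plan is to reduce the uniqueness claim to the maximum principle (Theorem~\ref{thm:silvestre-mp} or, more precisely, to a version applicable under the integral smallness condition \eqref{cond0}). Since \eqref{pb:pure} is linear and $f=0$, it suffices to show that any $u\in\mathcal{L}_{2s}$ with $(-\Delta)^s u=0$ in $\mathcal{D}'(\Omega)$, $u=0$ on $\mathbb{R}^n\backslash\Omega$, and satisfying \eqref{cond0} must vanish: applying the argument to both $u$ and $-u$ then forces $u\equiv 0$. Note that \eqref{cond0} applied to $u$ automatically gives \eqref{cond1} for $u^-$ (since $u^-\leqslant|u|$), and likewise for $(-u)^- = u^+\leqslant |u|$, so the hypothesis is symmetric under $u\mapsto -u$.

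The key step is therefore to establish a maximum principle under condition \eqref{cond0}, i.e. a version of Theorem~\ref{thm:main-mp} — but note Theorem~\ref{thm:main-mp} is stated \emph{before} this section and may be invoked. Granting it: $u$ satisfies $(-\Delta)^s u = 0 \geqslant 0$ in $\Omega$, $u\geqslant 0$ on $\mathbb{R}^n\backslash\Omega$ (indeed $u=0$ there), and \eqref{cond1} holds for $u^-$ as just observed; hence $u\geqslant 0$ in $\mathbb{R}^n$. Replacing $u$ by $-u$ and repeating gives $-u\geqslant 0$, so $u\equiv 0$. If instead one wants a self-contained argument, the route is: first upgrade regularity, using $(-\Delta)^s u = 0$ in $\mathcal{D}'(\Omega)$ together with Theorem~\ref{thm:silvestra-semi} (and its dual) to conclude $u\in C^\infty(\Omega)$ after modification on a null set; then the standard interior estimates show $u$ is smooth, and \eqref{cond0} is a statement about the genuine pointwise values near $\partial\Omega$.

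The main obstacle is the passage from the integral decay condition \eqref{cond0} to an actual boundary condition usable in the maximum principle: a function $s$-harmonic in $\Omega$ and vanishing outside $\Omega$ need not be continuous up to $\partial\Omega$ (the remark after Theorem~\ref{thm:main1} exhibits exactly such a blow-up), so one cannot simply invoke Theorem~\ref{thm:silvestre-mp} directly. The mechanism to handle this is a comparison/barrier argument: suppose $u(x_0)>0$ for some $x_0\in\Omega$; one builds a competitor by subtracting a small multiple of $G_\Omega(\cdot,\cdot)$ integrated against the singular weight, or by testing against a carefully chosen nonnegative $\varphi\in\mathcal{D}(\Omega)$ concentrated near $x_0$ and estimating $\int u\,(-\Delta)^s\varphi$, where the contribution from the thin boundary layer $\Omega\backslash\Omega_\epsilon$ is controlled precisely by the $\epsilon^{-s}$-weighted integral in \eqref{cond0} (because $(-\Delta)^s\varphi$ decays like $\dist(\cdot,\partial\Omega)^{-n-2s}$ away from the support, and the relevant scaling produces the exponent $s$). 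Pushing $\epsilon\to 0$ kills the boundary-layer term by \eqref{cond0}, leaving the sign of the interior term, which contradicts $(-\Delta)^s u = 0$. This is the delicate estimate; the rest is bookkeeping and an application of the already-established maximum principle Theorem~\ref{thm:main-mp}.
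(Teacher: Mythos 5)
Your primary route is circular within the paper's logical architecture. You propose to deduce Theorem~\ref{thm:un} from Theorem~\ref{thm:main-mp}, reasoning that the latter ``is stated before this section and may be invoked.'' But the paper proves Theorem~\ref{thm:main-mp} \emph{after} Theorem~\ref{thm:un}, and its proof rests on Remark~\ref{rmk:mp}, which in turn is obtained by ``a similar approach'' to the proof of Theorem~\ref{thm:un} itself. In other words, the maximum principle under condition~\eqref{cond0} is a \emph{consequence} of the uniqueness argument, not a tool available for it. Stating a theorem in the introduction does not license using it to prove the lemmas that establish it. If you want to reverse the dependency, you must give an independent proof of Theorem~\ref{thm:main-mp}, which you do not.

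Your fallback ``self-contained'' sketch correctly identifies the obstacle --- one cannot invoke Theorem~\ref{thm:silvestre-mp} directly because $u$ need not be continuous up to $\partial\Omega$, and the counterexample $(1-|x|^2)^{s-1}$ shows the hypothesis $(-\Delta)^s u = 0$ in $\mathcal{D}'(\Omega)$, $u=0$ outside $\Omega$ is not enough by itself --- but the mechanism you sketch does not close the gap. Testing with a fixed nonnegative $\varphi\in\mathcal{D}(\Omega)$ concentrated near $x_0$ gives $\int u\,(-\Delta)^s\varphi = 0$ by hypothesis with no contradiction to extract, and the boundary-layer contribution you want to bound via~\eqref{cond0} is already trivially $o(1)$ because $\sup|(-\Delta)^s\varphi|$ is a fixed constant; the $\epsilon^{-s}$ weight in~\eqref{cond0} never enters. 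The claim that $(-\Delta)^s\varphi$ decays like $\dist(\cdot,\partial\Omega)^{-n-2s}$ is also off: it decays like $|x - x_0|^{-n-2s}$ away from $\operatorname{supp}\varphi$, and $\operatorname{supp}\varphi$ is at a fixed distance from $\partial\Omega$. The exponent $s$ in~\eqref{cond0} has a different origin, which your sketch misses.

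What actually makes~\eqref{cond0} the right hypothesis --- and what the paper does --- is this: mollify to get $u_\epsilon = J_\epsilon u$, which is genuinely $s$-harmonic in the shrunken domain $\Omega_\epsilon$ and vanishes outside $\Omega^\epsilon$, so Theorem~\ref{thm:Pg} gives the exact representation $u_\epsilon(x) = \int_{\mathbb{R}^n\setminus\Omega_\epsilon} P_{\Omega_\epsilon}(x,y)\,u_\epsilon(y)\,dy$. Unfolding $u_\epsilon = j_\epsilon * u$ and using the boundary singularity of the Poisson kernel from Lemma~\ref{lem:estP1}, namely $P_{\Omega_\epsilon}(x,y) \lesssim \dist(y,\partial\Omega_\epsilon)^{-s}$ for $x$ in a fixed $U\subset\subset\Omega$, one estimates the resulting kernel $A_\epsilon(x,z)=\int P_{\Omega_\epsilon}(x,y)j_\epsilon(y-z)\,dy \lesssim \epsilon^{-s}$. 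This is precisely the $\epsilon^{-s}$ rate that pairs against~\eqref{cond0}: it yields $\|u_\epsilon\|_{L^\infty(U)}\lesssim \epsilon^{-s}\int_{\Omega\setminus\Omega_{2\epsilon}}|u|\to 0$, hence $u\equiv 0$ on every $U\subset\subset\Omega$. The representation via the Poisson kernel on the shrunken domain is the essential device, and it is absent from your proposal.
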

\begin{proof}
First we do the mollify, let $u_\epsilon(x)=J_\epsilon u(x)$. Then we know $u_\epsilon\in C_0^\infty(\mathbb{R}^n)$ satisfies the following equations
\begin{equation}
\left\{\begin{aligned}
(-\Delta)^s u_\epsilon&=0\ \ &\text{in}\ &\Omega_{\epsilon}\\
u_\epsilon&=0\ \ &\text{in}\ &\mathbb{R}^n\backslash \Omega^{\epsilon}
\end{aligned}\right.
\end{equation}

Then we can  represent $u$,  by using Theorem~\ref{thm:Pg} on $\Omega_\epsilon$, as
\begin{equation}
u_\epsilon(x)
=\int_{\mathbb{R}^n\backslash \Omega_{\epsilon}}P_{\Omega_\epsilon}(x,y)u_\epsilon(y)\ dy,\ \text{for}\ x\in B_{1-\epsilon}.
\end{equation}
Here $P_{\Omega_\epsilon}(x,y)$ denotes the Poisson kernel of $\Omega_\epsilon$.

Now for any given $U\subset\subset\Omega$, let $3\sigma=\dist(U,\partial\Omega)$. We choose $0<\epsilon<\sigma$ and estimate $\|u_\epsilon\|_{L^\infty(U)}$. Indeed for $x\in U$,
\begin{equation}
\label{ieq0}
\begin{aligned}
|u_\epsilon(x)|
&=\left|\int_{\mathbb{R}^n\backslash \Omega_\epsilon} P_{\Omega_\epsilon}(x,y)u_\epsilon(y)\intd y\right|\\
&=\left|\int_{\Omega^\epsilon\backslash \Omega_\epsilon} P_{\Omega_\epsilon}(x,y)\left[\int_{\Omega\backslash \Omega_{2\epsilon}}u(z)j_\epsilon(y-z)\intd z\right] \intd y\right|\\
&\leqslant \int_{\Omega\backslash \Omega_{2\epsilon}}\underbrace{\left[\int_{\Omega^\epsilon\backslash \Omega_\epsilon} P_{\Omega_\epsilon}(x,y)j_\epsilon(y-z)\intd y\right]}_{\text{denote\ as\ }A_\epsilon(x,z)}|u(z)|\intd z.
\end{aligned}
\end{equation}

We claim that for a positive constant $M$ depending only on $n,s,\Omega$ and  $U$, the following inequality holds:
\begin{equation}
\label{ieq0-1}
  A_\epsilon(x,z)\leqslant M\epsilon^{-s}\qquad\text{for\ any}\quad x\in U\quad\text{and}\quad z\in\Omega\backslash \Omega_{2\epsilon}.
\end{equation}
In fact, from Lemma~\ref{lem:estP1}, we know
   \begin{equation}
    P_{\Omega_\epsilon}(x,y)\leqslant \frac{C }{\dist(y,\partial\Omega_\epsilon)^s|x-y|^{n-s}}\leqslant \frac{C}{\sigma^{n-s}\dist(y,\partial\Omega_\epsilon)^s}\qquad\text{for}\quad x\in U\quad y\in\Omega^\epsilon\backslash\Omega_\epsilon.
  \end{equation}
  Then
  \begin{equation}
\begin{aligned}
A_\epsilon(x,z)
&\leqslant  M_0\epsilon^{-n}\int_{\Omega^\epsilon\backslash \Omega_\epsilon} \dist(y,\partial\Omega_\epsilon)^{-s}\chi_{B_\epsilon(z)}\intd y\\
&=M_0\epsilon^{-n}\left(\int_{\Omega\backslash \Omega_\epsilon} +\int_{\Omega^\epsilon\backslash \Omega}\right) \dist(y,\partial\Omega_\epsilon)^{-s}\chi_{B_\epsilon(z)}\intd y\\
&\leqslant M_0\epsilon^{-n}\int_{0}^{\epsilon}\int_{\partial\Omega_{\epsilon-t}\cap B_\epsilon(z)}\intd \mathcal{H}^{n-1}_yt^{-s}\intd t+M_0\epsilon^{-n}\epsilon^{-s}|B_\epsilon(z)|\\
&\leqslant M\epsilon^{-s}.
\end{aligned}
\end{equation}
We have proved the claim \eqref{ieq0-1}.
Now, substituting \eqref{ieq0-1} into \eqref{ieq0}, one has
\begin{equation}\label{ieq1}
\|u_\epsilon\|_{L^\infty(U)}\leqslant M\epsilon^{-s}\int_{\Omega\backslash \Omega_{2\epsilon}} |u(z)|\ dz.
\end{equation}

Noting the condition \eqref{cond0}, by letting $\epsilon\rightarrow0^+$ in \eqref{ieq1}, one obtain:
\begin{equation}\label{eq1}
\|u\|_{L^\infty(U)}=0.
\end{equation}

Since $U$ in \eqref{eq1} is arbitrary, one can derive $\|u\|_{L^\infty(\Omega)}=0$.
\end{proof}
\begin{remark}
\label{rmk:mp}
  A similar approach also shows that under same conditions, if $f\leqslant 0$ then $u\leqslant 0$.
\end{remark}
Based on this, one can also prove Theorem~\ref{thm:main-mp} as follow:
\begin{proof}[Proof of Theorem~\ref{thm:main-mp}]
  Consider $u^-=\max\{-u,0\}$, Theorem~\ref{thm:maxcons} shows that
  \begin{equation}
 \left\{ \begin{aligned}
    (-\Delta)^s u^-\leqslant -cu^-\leqslant 0\qquad&\text{in}\quad\mathcal{D}^\prime(\Omega),\\
    u^-=0\qquad&\text{on}\quad\mathbb{R}^n\backslash\Omega.
    \end{aligned} \right.
  \end{equation}
  Then Remark~\ref{rmk:mp} instantly gives $u^-\leqslant0$, hence $u^-=0$ and $u\geqslant0$ in $\Omega$.
\end{proof}
\begin{theorem}[Existence]\label{thm:exis}
Let $\Omega$ be a bounded domain in $\mathbb{R}^n$ satisfying the uniform exterior ball condition, $0<s<1$ and suppose $f\in  L^1_s(B_1)$. Then $u=G_\Omega\ast f$ solves the problem \eqref{pb1}. Furthermore, $u$ satisfies the uniqueness condition \eqref{cond0}.
\end{theorem}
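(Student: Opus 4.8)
The plan is to verify the three assertions of the theorem in turn: that $u:=G_\Omega\ast f$ is well defined and belongs to $L^1(\Omega)\cap\mathcal{L}_{2s}$; that it solves \eqref{pb:pure} in the distributional sense; and that it satisfies the trace condition \eqref{cond0}. For the first point, write $d_y:=\dist(y,\partial\Omega)$. Remark~\ref{rmk7} gives $0\le G_\Omega(x,y)\le C\,d_y^{\,s}|x-y|^{-(n-s)}$, and since $n-s<n$ and $\Omega$ is bounded, $\int_\Omega|x-y|^{-(n-s)}\intd x\le C(n,s,\diam\Omega)$ uniformly in $y$. Tonelli then yields
\[
\int_\Omega\!\!\int_\Omega G_\Omega(x,y)\,|f(y)|\intd y\intd x\le C\int_\Omega d_y^{\,s}|f(y)|\intd y=C\,\|f\|_{L^1_s(\Omega)}<\infty ,
\]
so $u(x)$ is finite for a.e.\ $x$, $u\in L^1(\Omega)$, and since $u$ is supported in $\overline\Omega$ it lies in $\mathcal{L}_{2s}$. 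By the definition of $G_\Omega\ast f$ and the fact that $G_\Omega(x,y)=0$ for $x\notin\Omega$, we also have $u\equiv0$ on $\mathbb{R}^n\setminus\Omega$.

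For the equation I would repeat the Fubini computation from the proof of Theorem~\ref{thm:Gf}. Fix $\varphi\in\mathcal{D}(\Omega)$; since $(-\Delta)^s\varphi$ is bounded on $\mathbb{R}^n$, the bound above justifies interchanging the integrations, and Corollary~\ref{cor:delta}, which gives $\int_\Omega G_\Omega(x,y)(-\Delta)^s\varphi(x)\intd x=\varphi(y)$, yields
\[
\int_{\mathbb{R}^n}u(x)(-\Delta)^s\varphi(x)\intd x=\int_\Omega f(y)\!\left[\int_\Omega G_\Omega(x,y)(-\Delta)^s\varphi(x)\intd x\right]\!\intd y=\int_\Omega f(y)\varphi(y)\intd y ,
\]
i.e.\ $(-\Delta)^su=f$ in $\mathcal{D}'(\Omega)$. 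Together with $u=0$ on $\mathbb{R}^n\setminus\Omega$ this shows $u$ solves \eqref{pb:pure}. (Alternatively one could first treat $f\in C_c(\Omega)$ via Theorem~\ref{thm:Gf} and pass to the limit, using that $C_c(\Omega)$ is dense in $L^1_s(\Omega)$ and $\|G_\Omega\ast(f-f_k)\|_{L^1(\Omega)}\le C\|f-f_k\|_{L^1_s(\Omega)}$.)

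The main work is the trace condition \eqref{cond0}. Combining Proposition~\ref{prop:estG} with Remark~\ref{rmk7} gives $G_\Omega(x,y)\le C\min\{\dist(x,\partial\Omega),d_y\}^{\,s}\,|x-y|^{-(n-s)}$. Inserting this into $\int_{\Omega\setminus\Omega_\epsilon}|u(x)|\intd x$ and applying Tonelli, it suffices to show
\[
\epsilon^{-s}\int_\Omega|f(y)|\,K_\epsilon(y)\intd y\to0,\qquad K_\epsilon(y):=\int_{\Omega\setminus\Omega_\epsilon}\frac{\min\{\dist(x,\partial\Omega),d_y\}^{\,s}}{|x-y|^{n-s}}\intd x ,
\]
which I would obtain from the dominated convergence theorem. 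The geometric input is the collar estimate
\[
\int_{\Omega\setminus\Omega_\epsilon}\frac{\intd x}{|x-y|^{n-s}}\le C(n,s,\Omega)\,\epsilon^{s}\qquad\text{uniformly in }y\in\mathbb{R}^n ,
\]
which follows from the layer--cake formula together with $\big|\{\dist(\cdot,\partial\Omega)<\epsilon\}\cap B_\rho(y)\big|\le C\min\{\rho^n,\epsilon\rho^{n-1}\}$ — the same type of tube bound already used in the proof of Theorem~\ref{thm:un} and guaranteed by the uniform exterior ball condition. Since $\min\{\dist(x,\partial\Omega),d_y\}^{\,s}\le d_y^{\,s}$ on the collar, the collar estimate gives $\epsilon^{-s}K_\epsilon(y)\le C\,d_y^{\,s}$ for all small $\epsilon$, so that $d_y^{\,s}|f(y)|\in L^1(\Omega)$ is an admissible dominating function; and for each fixed $y\in\Omega$, once $\epsilon<d_y/2$ the collar $\Omega\setminus\Omega_\epsilon$ lies at distance $\ge d_y/2$ from $y$, so there $\min\{\dist(x,\partial\Omega),d_y\}^{\,s}\le\epsilon^{s}$ and $|x-y|^{-(n-s)}\le(2/d_y)^{n-s}$, whence $K_\epsilon(y)\le C\,d_y^{-(n-s)}\epsilon^{s}\,|\Omega\setminus\Omega_\epsilon|\le C\,d_y^{-(n-s)}\epsilon^{1+s}$ and $\epsilon^{-s}K_\epsilon(y)\le C\,d_y^{-(n-s)}\epsilon\to0$. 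Dominated convergence then yields \eqref{cond0}.

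The main obstacle is exactly this last step: one must extract the sharp $\epsilon^{s}$ decay of $\int_{\Omega\setminus\Omega_\epsilon}|x-y|^{-(n-s)}\intd x$ uniformly in $y$ (the crude bound by a constant is not enough, since it fails to absorb the $\epsilon^{-s}$ prefactor), and then arrange the dominated convergence so that the dominating function is comparable to $\dist(y,\partial\Omega)^{s}|f(y)|$, which is integrable precisely because $f\in L^1_s(\Omega)$. Everything else — the Tonelli interchanges, the distributional identity, and the vanishing of $u$ outside $\Omega$ — is routine given the estimates of Section~\ref{sec.4}.
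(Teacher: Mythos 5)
Your proof is correct and follows essentially the same route as the paper: both establish \eqref{cond0} by interchanging the order of integration, bounding the resulting collar integral $\epsilon^{-s}\int_{\Omega\setminus\Omega_\epsilon}G_\Omega(x,y)\intd x$ by $C\dist(y,\partial\Omega)^s$ via the boundary-growth estimate from Section~\ref{sec.4}, showing it tends to zero pointwise, and concluding by dominated convergence with dominating function $\dist(y,\partial\Omega)^s|f(y)|\in L^1(\Omega)$. The only cosmetic differences are that you package Proposition~\ref{prop:estG} and Remark~\ref{rmk7} into a single $\min$ bound and use a dyadic tube estimate where the paper case-splits on $\dist(y,\partial\Omega)$ and uses the coarea formula, and that you verify the distributional identity by a direct Fubini argument (also offering the paper's approximation-by-$C^\infty(\overline\Omega)$ route as an alternative).
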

\begin{proof}
We first show that such $u$ satisfies the condition \eqref{cond0}.

Indeed, for $\epsilon>0$, one calculates:
\begin{equation}
  \frac{1}{\epsilon^s}\int_{\Omega\backslash \Omega_{\epsilon}}|u(x)| \intd x
  \leqslant \int_{\Omega}\underbrace{\left[\frac{1}{\epsilon^s} \int_{\Omega\backslash \Omega_{\epsilon}} G_\Omega(x,y)\intd x\right]}_{\text{denote\ as}\ A_\epsilon(y)}|f(y)|\intd y.
\end{equation}
Utilizing Proposition~\ref{prop:estG}, one estimates $A_\epsilon$ as follow:
\begin{itemize}
  \item If $y\in \Omega\backslash \Omega_{2\epsilon}$
  \begin{equation}
    \begin{aligned}
    A_\epsilon (y)&\leqslant \frac{1}{\epsilon^s} \int_{\Omega\backslash \Omega_{4\epsilon}} G_\Omega(x,y)\intd x\\
    &\leqslant\frac{1}{\epsilon^s}\left[
    \int_{\Omega\backslash (\Omega_{4\epsilon}\cup B_{\dist(y,\partial\Omega)}(y))} G_\Omega(x,y)\intd x
    +\int_{B_{\dist(y,\partial\Omega)}(y)} G_\Omega(x,y)\intd x
    \right]\\
    &\leqslant\frac{C}{\epsilon^s}\left[
    \int_{\Omega\backslash (\Omega_{4\epsilon}\cup B_{\dist(y,\partial\Omega)}(y))} \frac{\dist(y,\partial\Omega)^s}{|x-y|^{n-s}}\intd x
    +\int_{B_{\dist(y,\partial\Omega)}(y)} \frac{1}{|x-y|^{n-2s}}\intd x
    \right]\\
    &\leqslant \frac{C}{\epsilon^s}\int_{0}^{4\epsilon}\int_{\partial \Omega_t\backslash B_{\dist(y,\partial\Omega)}(y) }\frac{t^s}{|x-y|^{n-s}}\intd\mathcal{H}^{n-1}_x\intd t+C\dist(y,\partial\Omega)^s\\
    &\leqslant \frac{C}{\epsilon^s}\left[\int_{0}^{2\dist(y,\partial\Omega)} \frac{t^s}{\dist(y,\partial\Omega)^{1-s}}\intd t +\int_{2\dist(y,\partial\Omega)}^{4\epsilon} \frac{t^s}{(t-\dist(y,\partial\Omega))^{1-s}}\intd t \right]\\
    &\qquad\qquad+C\dist(y,\partial\Omega)^s \\
    &\leqslant C\dist(y,\partial\Omega)^s.
    \end{aligned}
  \end{equation}
  \item If  $y\in \Omega_{2\epsilon}$
  \begin{equation}
  \begin{aligned}
    A_\epsilon (y)&\leqslant \frac{C}{\epsilon^s}\int_{\Omega\backslash \Omega_{\epsilon}}\frac{\dist(x,\partial\Omega)^s}{|x-y|^{n-s}}\intd x\\
    &=\frac{C}{\epsilon^s} \int_{0}^\epsilon\int_{\partial \Omega_{t}}\frac{t^s}{|x-y|^{n-s}}\intd\mathcal{H}^{n-1}_xdt\\
    &\leqslant \frac{C}{\epsilon^s}\int_{0}^\epsilon\frac{t^s}{\epsilon^{1-s}}dt\quad\left(\text{since}\ \operatorname{dist}(y,\partial \Omega_t)\geqslant\epsilon\right)\\
    &\leqslant C\epsilon^s\leqslant C\dist(y,\partial\Omega)^s.
    \end{aligned}
  \end{equation}
\end{itemize}
Therefore, $A_\epsilon(y)\leqslant C\dist(y,\partial\Omega)^s$ and $A_\epsilon(y)\rightarrow 0$ as $\epsilon\rightarrow 0$.

Now, we know $A_\epsilon(y)|f(y)|\leqslant C\dist(y,\partial\Omega)^s|f(y)|\in L^1(\Omega)$ and $A_\epsilon(y)|f(y)|\rightarrow 0$ as $\epsilon\rightarrow 0$, the Lebesgue convergence theorem assures that
\begin{equation}
  \limsup_{\epsilon\rightarrow 0}\frac{1}{\epsilon^s}\int_{\Omega\backslash \Omega_{\epsilon}}|u(x)| dx\leqslant \lim_{\epsilon\rightarrow 0}\int_{\Omega}A_\epsilon(y)|f(y)|dy=0.
\end{equation}
I.e. $u$ satisfies the uniqueness condition \eqref{cond0}.

The above calculation also gives us a basic estimate:
\begin{equation}
\label{est4}
  \|G\ast f\|_{L^1(\Omega)}\leqslant C\|f\|_{L^1_s(\Omega)}.
\end{equation}

Since $f\in  L^1_s(\Omega)$, we can choose $f_k\in C^\infty(\overline{\Omega})\cap L^1_s(B_1)$ that converges to $f$ in $ L^1_s(\Omega)$ as $k\rightarrow\infty$.
According to Theorem~\ref{thm:Gf}, there exists a corresponding solution sequence $\{u_k=G_\Omega\ast f_k\}$.
\eqref{est4} implies that $u_k$ converges to $u$ in $L^1(\Omega)$ as $k\rightarrow\infty$. Hence, $u$ as the distributional limit of $u_k$ satisfies \eqref{pb:pure}.
\end{proof}
\begin{remark}
Theorem~\ref{thm:main1} is a direct corollary of Theorem \ref{thm:un} and Theorem \ref{thm:exis}.
\end{remark}

\section{Acknowledgements}
Both authors were partially supported by National Natural Science Foundation of China (Grant Nos.
12031012, 11831003) and Natural Science Foundation of Henan Province of China (Grant No.222300420499).

\end{document}